\documentclass[11pt,reqno]{amsart}

\usepackage[T1]{fontenc}
\usepackage{lmodern}
\usepackage{microtype}
\usepackage{mathtools}
\usepackage{amssymb}
\usepackage{mathrsfs}
\usepackage{bm}
\usepackage{xcolor}

\usepackage{enumitem}
\usepackage{needspace}

\usepackage{hyperref}
\usepackage{bookmark}
\usepackage[nameinlink,capitalize,noabbrev]{cleveref}

\microtypesetup{expansion=false}
\setlist[itemize]{
  leftmargin=2.2em,
  itemsep=0.15em,
  topsep=0.25em
}
\numberwithin{equation}{section}
\allowdisplaybreaks[1]

\hypersetup{
  hypertexnames=false,
  hidelinks,
  pdfauthor={Wei-Chun Chen, Wei-Ting Kao, and Chun-Kai Lien},
  pdftitle={The Jacobi Spectrum and Global Second Variation of Pansu Spheres},
  pdfsubject={The Jacobi Spectrum and Global Second Variation of Pansu Spheres},
  pdfkeywords={Heisenberg group, Pansu sphere, horizontal perimeter, Jacobi operator,
    real spherical harmonics, Hopf frequencies, Morse index, characteristic points,  variation}
}

\newcommand{\PansuResultBookmark}[3]{%
  \begingroup
  \edef\PansuBookmarkDest{\getrefbykeydefault{#2}{anchor}{}}%
  \if\relax\detokenize\expandafter{\PansuBookmarkDest}\relax
  \else
    \bookmark[level=#1,dest={\getrefbykeydefault{#2}{anchor}{}}]{%
      #3 \ref*{#2}: \nameref*{#2}}%
  \fi
  \endgroup
}

\theoremstyle{plain}
\newtheorem{proposition}{Proposition}[section]
\newtheorem{lemma}[proposition]{Lemma}
\newtheorem{theorem}[proposition]{Theorem}
\newtheorem{corollary}[proposition]{Corollary}

\theoremstyle{remark}
\newtheorem{remark}[proposition]{Remark}

\theoremstyle{definition}
\newtheorem{definition}[proposition]{Definition}
\newtheorem*{AIuse}{AI Usage Statement}

\newcommand{\RR}{\mathbb R}
\newcommand{\CC}{\mathbb C}
\newcommand{\NN}{\mathbb N}
\newcommand{\Hn}{\mathbb H^n}

\newcommand{\cE}{\mathcal E}
\newcommand{\cK}{\mathcal K}
\newcommand{\cH}{\mathcal H}

\newcommand{\PSphere}{\mathcal S}
\newcommand{\Sphere}{S^{2n-1}}
\newcommand{\gSphere}{{g_{S^{2n-1}}}}
\newcommand{\LTheta}{{L_\Theta}}
\newcommand{\gTheta}{{g_\Theta}}
\newcommand{\gradSphere}{\nabla_b^{S^{2n-1}}}
\newcommand{\DeltaSphere}{\Delta_b^{S^{2n-1}}}
\newcommand{\TSphere}{T_{S^{2n-1}}}
\newcommand{\CRD}{\xi_{S^{2n-1}}}
\newcommand{\Shape}{\mathfrak S}
\newcommand{\II}{\mathrm{II}_{\xi}}
\newcommand{\PH}{P_H}

\newcommand{\dd}{\,\mathrm d}
\newcommand{\abs}[1]{\lvert#1\rvert}
\newcommand{\norm}[1]{\lVert#1\rVert}
\newcommand{\ip}[2]{\langle #1,#2\rangle}

\DeclareMathOperator{\spn}{span}
\DeclareMathOperator{\Spec}{Spec}

\DeclareMathOperator{\tr}{tr}
\DeclareMathOperator{\Id}{Id}

\let\Re\relax
\let\Im\relax
\DeclareMathOperator{\Re}{Re}
\DeclareMathOperator{\Im}{Im}

\title[Jacobi Spectrum and Global Second Variation]{The Jacobi Spectrum and Global Second Variation of Pansu Spheres}
\author{Wei-Chun Chen}
\address{Department of Mathematics, National Taiwan University, Taipei 10617, Taiwan}
\email{b14201028@ntu.edu.tw}

\author{Wei-Ting Kao}
\address{Institute of Mathematics, Academia Sinica, Taipei 106319, Taiwan}
\email{wtkao@as.edu.tw}

\author{Chun-Kai Lien}
\address{Department of Mathematics, National Taiwan University, Taipei 10617, Taiwan}
\email{d13221002@ntu.edu.tw}
\date{}
\subjclass[2020]{Primary 53C17, 49Q10; Secondary 35P15, 32V20}
\keywords{Heisenberg group, Pansu sphere, Jacobi operator,
  real spherical harmonics, Friedrichs realization, ambient stability,
  characteristic points}

\newcommand{\PansuCMapPath}{}
\IfFileExists{\detokenize{$TEXMFOUTPUT}/\jobname-pansu-oms.cmap}
  {\renewcommand{\PansuCMapPath}{\detokenize{$TEXMFOUTPUT}/}}
  {\IfFileExists{\detokenize{$TEXMF_OUTPUT_DIRECTORY}/\jobname-pansu-oms.cmap}
    {\renewcommand{\PansuCMapPath}{\detokenize{$TEXMF_OUTPUT_DIRECTORY}/}}{}}
\AtBeginDocument{%
}

\begin{document}
\begin{abstract}
  {For $n\ge2$, we establish second differentiability of
  horizontal perimeter for compact $C^2$ boundaries in $\Hn$ and
  the continuous extension of their affine second variation to
  $C^1$ vector fields along the boundary. For the unit Pansu sphere,
  the Hessian agrees with the closed Jacobi form across both
  characteristic poles,
  including variations which move them. We compute the full
  Friedrichs spectrum of its
  Jacobi operator. The operator has compact resolvent, Morse index
  one and nullity $2n+1$; its kernel consists exactly of normalized
  translation speeds. Consequently, the sphere is stable under
  volume-preserving ambient variations.}
\end{abstract}
\maketitle

\section{Introduction}
\label{sec:introduction}

{The regular Jacobi formula for horizontal perimeter is
defined away from the characteristic set of a hypersurface in the
Heisenberg group. At a characteristic point the horizontal normal
vanishes, and the normalized normal speed of a smooth field can be
unbounded. This makes it necessary to determine when the regular
Jacobi pairing represents the global Hessian. We first establish
second differentiability for general $C^2$ boundaries, then
identify the Pansu-sphere Hessian and compute its Jacobi spectrum.}

The geometry of Pansu spheres is closely related to the umbilic
and horizontal-shape structure studied by Cheng, Chiu, Hwang,
and Yang~\cite{CCHY}. Hladky and Pauls derived a second-variation
formula for variations supported away from the characteristic
locus~\cite{HP}.
{The first arXiv version
of their work, posted in 2007, also contains an argument for
volume-preserving stability in
$\mathbb H^2$, using pole cutoffs, spherical harmonics and a
radial inequality~\cite{HP}.}
{In the three-dimensional setting, Hurtado and Rosales proved
volume-preserving stability for variations that are $C^3$ away from
the characteristic poles and satisfy their area-density
differentiation conditions, including a parameter-independent
integrable majorant for the second derivative of the area density.
Their class includes variations that move the
poles~\cite[Definition~5.1, Theorem~5.9, and Appendix~B]{HurtadoRosales}.}

{On any $C^2$ boundary, the geometry of the characteristic
set permits differentiation of horizontal perimeter when $n\ge2$.
In a vertical graph chart, perimeter is the integral of the norm
of a horizontal cofactor $G$. Its second derivative has at most a
$|G|^{-1}$ singularity, and the automatic bound
$\operatorname{rank}D_zG\ge n$ gives the required sublevel-set
estimate. This differentiation allows nonisolated characteristic
sets. Balogh proved that characteristic sets of $C^2$
hypersurfaces have Euclidean and Heisenberg Hausdorff dimensions at
most $n$~\cite{Balogh}; related rank estimates appear in
\cite{CHY,HP}.}

{The unit Pansu sphere has two characteristic poles.
Estimates at these poles put every $C^1$ surface-field speed in the
closed energy domain and identify its Hessian with the closed
Jacobi form.}

{For a round Euclidean sphere, dilation gives the unique
negative Jacobi mode and translations span the kernel. Pansu spheres
are rotational hypersurfaces with constant horizontal mean curvature
that arise in the Heisenberg isoperimetric problem. After identifying
their global Hessian, we compute the Jacobi spectrum to determine
whether the same index and kernel statements hold.}
Montefalcone solved the radial
horizontal-Laplacian eigenvalue problem in hypergeometric form
\cite{MontefalconeSpectrum}. {He proved radial stability
for zero-mean radial test functions with square-integrable horizontal
gradients and square-integrable quotients by the distance to the
vertical axis, subject to his horizontal integration-by-parts
condition~\cite[Remark~4.2 and Proposition~4.9]{MontefalconeStability}.
He also proved local stability for compactly supported
volume-preserving normal variations on sufficiently small
patches~\cite[Definition~3.15 and Theorem~4.14]{MontefalconeStability}.}
{Montefalcone's radial horizontal-Laplacian eigenvalue
problem differs from the Jacobi spectral problem here: the Jacobi
operator has a singular potential, and we fix its domain at the poles
by closing its form in $L^2(d\sigma_H)$.} We compare the
operators and their normalizations in
Remark~\ref{rem:montefalcone-comparison}. We assemble all angular sectors
to obtain the complete spectrum and its multiplicities. The angular
decomposition uses real spherical harmonics resolved by their Hopf
frequency; its eigenvalues and dimensions follow from the classical
spherical-harmonic calculations~{\cite{Folland,CowlingKlimaSikora}}.

\subsection{Main results}
\label{subsec:intro-spectrum}

{Throughout, $n\ge2$.}
{For a bounded domain $\Omega\subset\Hn$ with $C^2$
boundary $\Sigma$, let $N$ be the outward unit normal for the
left-invariant Riemannian metric $\gTheta$ and $N_b$ its horizontal
component. A surface field $W$ is a section of $T\Hn|_\Sigma$;
it need not be tangent to $\Sigma$. Set
$\Sigma^*=\Sigma\setminus\{N_b=0\}$. On $\Sigma^*$, its normalized
normal speed is
\[
  \varphi_W=\frac{\langle W,N\rangle}{|N_b|}.
\]
For $h\in\RR$, put $\mathcal F_h=\PH-hV$. For $C^{1,1}$ surface fields
$W_1,W_2$, choose compactly supported $C^{1,1}$ ambient extensions
$\widetilde W_1,\widetilde W_2$. In the fixed coordinates $(z,t)$,
set $\Phi_{r,s}=\Id+r\widetilde W_1+s\widetilde W_2$ and
$\Omega_{r,s}=\Phi_{r,s}(\Omega)$. Then
\[
  \mathcal B_h(W_1,W_2)
  =\left.\partial_r\partial_s\right|_{(0,0)}
    \bigl[\PH(\partial\Omega_{r,s})-hV(\Omega_{r,s})\bigr].
\]
The value is independent of the chosen extensions.}

\PansuResultBookmark{2}{thm:c2-global-hessian}{Theorem}
{\begin{theorem}[Second variation for $C^2$ hypersurfaces]
  \label{thm:c2-global-hessian}
  Let $n\ge2$, and let $\Omega\subset\Hn$ be a bounded open set
  with compact $C^2$ boundary $\Sigma$, oriented by its outward
  unit normal. Let $F_s\colon\Sigma\to\Hn$ be a family with
  $F_0=\Id$, and write $\partial\Omega_s=F_s\Sigma$.
  Assume one of the following:
  \begin{enumerate}[label=\textup{(\roman*)},leftmargin=2.2em]
    \item $F_s=\Phi_s|_\Sigma$, where $\Phi_s$ is a jointly $C^3$
      family of ambient diffeomorphisms;
    \item $F_s=\Phi_s^X|_\Sigma$, where $\Phi_s^X$ is the flow of
      a $C^2$ ambient vector field $X$;
    \item $F_s=\Id+\sum_{i=1}^m s_iW_i$ in the fixed coordinates
      $(z,t)$, where $W_i\in C^{1,1}(\Sigma;\RR^{2n+1})$.
  \end{enumerate}
  Then, near $s=0$, both functions
  \[
    s\longmapsto\PH(F_s\Sigma),\qquad
    s\longmapsto V(\Omega_s)
  \]
  are $C^2$. Their first and second derivatives are the integrals
  of the corresponding material density derivatives, which are
  integrable across the characteristic set.

  For every $h\in\RR$, the affine second variation $\mathcal B_h$
  of $\mathcal F_h=\PH-hV$ extends uniquely to a continuous
  bilinear form on $C^1$ surface fields, with
  \[
    |\mathcal B_h(U,W)|
    \le C_{\Sigma,h}\|U\|_{C^1}\|W\|_{C^1}.
  \]
  If $H_\Sigma=h$ on $\Sigma^*$, then $\Sigma$ is stationary
  for $\mathcal F_h$ and $\mathcal B_h$ is its Hessian.
\end{theorem}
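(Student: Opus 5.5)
We localize and work in vertical graph charts, where horizontal perimeter becomes an integral of $|G|$, and everything reduces to dominated differentiation of $s\mapsto\int|G_s|$.

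\textbf{Localization.} By compactness of $\Sigma$ and a partition of unity, we cover $\Sigma$ by finitely many charts. In each chart, after a left translation and a choice of coordinates, $\Sigma$ is locally a graph over a domain $D\subset\RR^{2n}$ in a direction transverse to $\Sigma$. The perturbed surfaces $F_s\Sigma$ are parametrized over the same domain by $z\mapsto F_s(z,u(z))$. In case (iii) the perturbation is a $C^{1,1}$ linear displacement. In cases (i) and (ii) the flow or diffeomorphism family is jointly $C^2$ in $(s,x)$ after restriction. Either way, the pulled-back horizontal perimeter density has the form $|G_s(z)|$. Here $G_s$ is a horizontal cofactor vector, assembled from the $2n\times2n$ minors of the Jacobian of the parametrization paired with the contact form. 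This gives $G_s\in C^0$ in $z$, $C^2$ in $s$, with $\partial_sG_s$ and $\partial_s^2G_s$ bounded by $C\|W\|_{C^1}$-type quantities.

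For the $C^{1,1}$ case, at most first derivatives of $W$ appear, so only $C^1$ norms enter. The volume $V(\Omega_s)$ is handled by the divergence theorem or the change of variables $\int_\Omega\det D\Phi_s$. This is a polynomial in $s$ in case (iii) and $C^2$ in cases (i) and (ii), with no singularity.

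\textbf{Differentiation under the integral.} The derivatives of $|G|$ are
\[
  \partial_s|G|=\frac{\langle G,\dot G\rangle}{|G|},\qquad
  \partial_s^2|G|=\frac{\langle G,\ddot G\rangle+|\dot G|^2}{|G|}
    -\frac{\langle G,\dot G\rangle^2}{|G|^3}.
\]
Thus $|\partial_s^2|G_s||\le 2|\dot G_s|^2/|G_s|+|\ddot G_s|$, and $|\partial_s|G_s||\le |\dot G_s|$. First derivatives are therefore bounded and cause no trouble. The key is an integrable majorant for $|G_s|^{-1}$, uniform in small $s$.

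\textbf{Main obstacle: the sublevel estimate.} Write $G_s(z)=G_0(z)+O(s)$ in $C^0$. The decisive fact is $\operatorname{rank}D_zG_0\ge n$ wherever $G_0=0$, which follows from the antisymmetric structure $[X_j,Y_j]=T$. Concretely, at a characteristic point the symplectic part of the Hessian of the defining function cannot vanish on a Lagrangian-complement direction.

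I would prove a uniform sublevel bound $|\{z\in D: |G_s(z)|<\varepsilon\}|\le C\varepsilon^{n}$ for $|s|$ small. Near a characteristic point, $n$ components of $G_0$ have independent differentials, so by the implicit function theorem they serve as $n$ of the local coordinates. The set where those components are all below $\varepsilon$ then has measure $\lesssim\varepsilon^n$. This persists under the $C^1$-small perturbation $G_s$ only if $G_s$ is $C^1$ in $z$. That holds for cases (i) and (ii) since $u\in C^2$; for (iii), with $W\in C^{1,1}$, $D_zG_s$ is Lipschitz, which suffices.

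Since $n\ge2$, layer-cake gives $\int|G_s|^{-1}\le C\int_0^\infty\min(1,\lambda^{-n})\,d\lambda<\infty$, uniformly in $s$. Dominated convergence then justifies both differentiations and the continuity of the second derivative. This gives the $C^2$ claim, with derivatives equal to integrals of the material density derivatives. At this step I would be most careful about the uniformity in $s$ near the characteristic set, because $\Sigma$ here is only $C^2$.

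\textbf{Extension and identification.} By polarization, $\mathcal B_h(W_1,W_2)$ is the integral of a density bilinear in $(W_1,DW_1)$ and $(W_2,DW_2)$. Its singular part is bounded by $|DW_1||DW_2|/|G|$ plus bounded terms, so the majorant yields $|\mathcal B_h(U,W)|\le C_{\Sigma,h}\|U\|_{C^1}\|W\|_{C^1}$. Density of $C^{1,1}$ in $C^1$, with the integral formula itself continuous in $C^1$, gives the unique continuous extension. Independence of the extension was already noted.

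If $H_\Sigma=h$ on $\Sigma^*$, the first variation is $\int_{\Sigma^*}(H_\Sigma-h)\langle W,N\rangle\,d\sigma$-type. To see this, integrate by parts on $\Sigma$ minus an $\varepsilon$-neighbourhood of the characteristic set. The boundary terms vanish as $\varepsilon\to0$ because the neighbourhood has small area, by the same sublevel estimate, and the integrand is bounded. Hence $\Sigma$ is stationary. For a jointly $C^2$ family, the chain rule then shows that the second derivative of $\mathcal F_h$ depends only on the first-order speed field through $\mathcal B_h$: the acceleration term pairs with the vanishing first variation. So $\mathcal B_h$ is the Hessian.
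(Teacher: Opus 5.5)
Your outline has the same architecture as the paper's proof: fixed-atlas cofactor densities $|G_s|$, the rank bound at characteristic points, rank coordinates giving a uniform $O(\rho^n)$ sublevel estimate, integrability of $|G_s|^{-1}$ for $n\ge2$, the bound $C(1+|G_0|^{-1})(|U|+|DU|)(|W|+|DW|)$ on the mixed density, and a cutoff for stationarity. For the rank bound, replace your Lagrangian-complement sentence with the actual mechanism. In a vertical graph chart one has $D_zG-(D_zG)^{\top}=2J$, so $\ker D_zG$ is isotropic for $\langle J\cdot,\cdot\rangle$ and has dimension at most $n$.

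\textbf{Gap in case (iii).} The uniform sublevel estimate fails as you argue it. You claim that $D_zG_s$ is Lipschitz for $W\in C^{1,1}$. It is not. $G_s$ is a polynomial in $f_s$ and $D_xf_s=D_x\iota+s\,D_xW$, so $D_zG_s$ contains $D^2W$, which is only $L^\infty$. Even $D_zG_0$ contains $D^2\iota$ and is merely continuous, since $\Sigma$ is only $C^2$. For $s\ne0$, $G_s$ is Lipschitz but in general not $C^1$, so the $C^1$ inverse-function argument you use to transport the rank coordinates to $G_s$ is unavailable. What does hold on compact subcharts is $|G_s-G_0|\le C|s|$ and $\operatorname{Lip}(G_s-G_0)\le C|s|$, and this is what the paper uses. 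The coordinates $\chi_s=(G_{s,1},\dots,G_{s,n},x_{n+1},\dots,x_{2n})$ differ from the $C^1$ chart $\chi_0$ by a map with Lipschitz constant $O(|s|)$, so they are uniformly bi-Lipschitz for small $s$. The bi-Lipschitz change of variables then gives $\int_{\{|G_s|<\rho\}}|G_s|^{-\gamma}\,dx\le C\rho^{n-\gamma}$ uniformly in $s$, and $|G_s|\ge c/2$ away from the zeros of $G_0$.

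\textbf{Smaller points to repair.}
\begin{enumerate}
\item A uniform bound $\sup_s\int|G_s|^{-1}<\infty$ is not a dominating function. At a fixed $s_0$, the difference quotients of $\partial_s|G_s|$ are dominated by $C(1+|G_{s_0}|^{-1})$, which gives the second derivative there. Continuity of $s\mapsto\int\partial_s^2|G_s|$, however, needs uniform integrability of $|G_s|^{-1}$ and Vitali's theorem, which your uniform sublevel bound does supply. Alternatively, use the paper's regularization $(|G|^2+\varepsilon^2)^{1/2}$ with a split at $\{|G|<\rho\}$.
\item In the stationarity step, boundary terms of a sharp excision are governed by the size of $\partial\{g<\varepsilon\}$, not by $|\{g<\varepsilon\}|$. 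With a smooth cutoff $\chi_\rho(g)$, the integrand picks up a factor $\rho^{-1}$ from the cutoff gradient. The error is then at most $C\|W\|_{C^1}\rho^{-1}|\{g<2\rho\}|\le C\rho^{n-1}\|W\|_{C^1}$, which again uses $n\ge2$.
\item In cases (i) and (ii), continuity of $\partial_s^2G_s$ requires continuity of $\partial_s^2D_p\Phi_s$. This comes from joint $C^3$ regularity in (i) and from the variational equation for $D\Phi^X_\tau$ in (ii). A restricted family that is merely jointly $C^2$ in $(s,x)$ would only make $G_s$ of class $C^1$ in $s$.
\end{enumerate}
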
}

The continuous extension in the theorem is an extension of the affine
Hessian as a bilinear form on $C^1$ fields. Its assertion of twice
differentiability concerns the specified variation families.

Let $\PSphere=\partial\Omega\subset\Hn$ be the unit Pansu sphere,
whose horizontal mean curvature is $2n$.
{Let $\cE_n$ and $Q$ be its energy space and closed Jacobi
form, defined in Definition~\ref{def:energy}, and write
$Q[\varphi]=Q(\varphi,\varphi)$.}

\PansuResultBookmark{2}{cor:intro-hessian}{Corollary}
{\begin{corollary}[Global Hessian of the Pansu sphere]
  \label{cor:intro-hessian}
  On the unit Pansu sphere, every $C^1$ surface field $W$ has
  $\varphi_W\in\cE_n$, and for all such fields $W_1,W_2$,
  \begin{equation}
    \label{eq:rank-global-jacobi}
    \mathcal B_{2n}(W_1,W_2)
    =Q(\varphi_{W_1},\varphi_{W_2}).
  \end{equation}
  If $\Phi_\tau$ is a jointly $C^3$ family of ambient
  diffeomorphisms with $\Phi_0=\Id$, or the flow of a $C^2$ ambient
  field, and $\varphi$ is its initial normalized speed, then
  \begin{equation}
    \label{eq:main-hessian}
    \left.\frac{d^2}{d\tau^2}\right|_{\tau=0}
    \left[\PH(\Phi_\tau(\PSphere))-2nV(\Phi_\tau(\Omega))\right]
    =Q[\varphi].
  \end{equation}
\end{corollary}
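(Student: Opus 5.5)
The plan is to obtain the corollary from Theorem~\ref{thm:c2-global-hessian} plus a density argument in the energy space $\cE_n$. Two things must be shown: that the map $W\mapsto\varphi_W$ sends $C^1$ surface fields continuously into $\cE_n$, and that the identity \eqref{eq:rank-global-jacobi} holds on a dense class of fields. Continuity of both sides in the $C^1$ topology then extends the identity to all $C^1$ fields.

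\textbf{Step 1: the class of fields supported away from the poles.} For fields $W$ vanishing near the two poles, the classical computation applies. This is the Hladky--Pauls regular second variation on $\PSphere^*$, combined with stationarity: the Pansu sphere satisfies $H_\Sigma=2n$ on $\PSphere^*$. On this class, the material second derivative of $\PH-2nV$ reduces to the regular Jacobi pairing of the normal speeds. The tangential and second-order parts of the field contribute only terms of the form $(H-2n)\times(\cdots)$ and divergence terms, and these vanish after integration because the support avoids the singular set. So $\mathcal B_{2n}(W_1,W_2)=Q(\varphi_{W_1},\varphi_{W_2})$ for such fields, with $Q$ given there by its regular integrand.

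\textbf{Step 2: pole estimates and the cutoff limit.} This is the main obstacle. Near a pole, the explicit profile of the Pansu sphere gives $|N_b|\sim\rho$, where $\rho$ is the distance to the vertical axis. A $C^1$ field therefore has $\langle W,N\rangle=a+O(\rho)$, where $a$ is a constant times the vertical component at the pole, so $\varphi_W$ may blow up like $\rho^{-1}$. I would check against Definition~\ref{def:energy} that this singularity is still finite-energy when $n\ge2$. In the weighted measure $d\sigma_H$ one has roughly $\rho^{2n-1}\,d\rho$. Then $|\varphi|^2\sim\rho^{-2}$ and the horizontal gradient term $\sim\rho^{-4}\cdot\rho^{2}$ (after the $|N_b|$ weight) both have integrable factor $\rho^{2n-3}$. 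I would also check that the singular potential term in $Q$ is controlled by the same bound. This gives $\|\varphi_W\|_{\cE_n}\le C\|W\|_{C^1}$.

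Next, take cutoffs $\chi_\epsilon$ equal to $0$ on $\rho<\epsilon$, equal to $1$ on $\rho>2\epsilon$, with $|\nabla\chi_\epsilon|\lesssim\epsilon^{-1}$. I would show that $\varphi_{\chi_\epsilon W}=\chi_\epsilon\varphi_W\to\varphi_W$ in $\cE_n$. The commutator term is bounded by $\epsilon^{-2}\int_{\epsilon<\rho<2\epsilon}\rho^{-2}\rho^{2n-1}\,d\rho\sim\epsilon^{2n-4}$. This tends to zero only for $n\ge3$. For $n=2$ the commutator is $O(1)$, so there I would replace the cutoffs by logarithmic ones $\chi_\epsilon(\rho)=\log(\rho/\epsilon^2)/\log(1/\epsilon)$, whose energy cost tends to zero. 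Alternatively, I would use that the sphere has $C^\infty$ fields realizing the leading singularity, such as the vertical translation, and subtract them. Since $\chi_\epsilon W\to W$ only in $C^0$ and not in $C^1$, I cannot apply the continuity bound of Theorem~\ref{thm:c2-global-hessian} directly.

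\textbf{Step 3: decomposition for the $C^1$ identity.} Instead, I would write $W=c_1T+c_2\tilde T+W'$, where $T,\tilde T$ are smooth fields equal near each pole to the relevant translation generators, and $W'$ vanishes at the poles. For $W'$ the cutoff sequence can be taken convergent in $C^1$-weighted sense, and $\mathcal B_{2n}$ is controlled. I would verify \eqref{eq:rank-global-jacobi} for the finitely many explicit translation-type fields directly. Translations are isometries, so $\mathcal B_{2n}(T,\cdot)=0$, and their normalized speeds lie in $\ker Q$, which one checks by integrating by parts on $\PSphere^*$ with vanishing boundary flux. Bilinearity and the $C^1$ continuity of both sides then give \eqref{eq:rank-global-jacobi}.

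Finally, \eqref{eq:main-hessian} follows from cases (i)–(ii) of Theorem~\ref{thm:c2-global-hessian}. For a $C^3$ family or a $C^2$ flow, the second derivative equals $\mathcal B_{2n}(W,W)$ plus a first-variation term paired with the acceleration field. That extra term vanishes because $\PSphere$ is stationary for $\mathcal F_{2n}$. Applying \eqref{eq:rank-global-jacobi} with $W_1=W_2=W=\partial_\tau\Phi_\tau|_{\tau=0}$ then gives $Q[\varphi]$.
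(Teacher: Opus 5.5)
\textbf{Step 3 does not close, and that is where the work lies.} The identity $\mathcal B_{2n}(K,\cdot)=0$ holds only for a \emph{global} left-translation generator $K$. Invariance of $\mathcal F_{2n}$ gives $\mathcal B_{2n}(K,W)+\delta\mathcal F_{2n}[(\mathrm dK)W]=0$, and it is stationarity, not the isometry property alone, that removes the second term.

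Global generators cannot absorb the values of $W$ at both poles. The horizontal generators are tangent to $\PSphere$ at the poles, and $\partial_t$ has normal components $+1$ and $-1$ there. So the dilation $D$, whose normal component is $\pi/2$ at both poles, is not of the form $K+W'$ with $W'$ vanishing at the poles. More simply, there are $2n+1$ generators but $2(2n+1)$ pole values to match.

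Your $T,\tilde T$ must therefore be cut-off translations $\eta K$. Then $\mathcal B_{2n}(\eta K,\cdot)=-\mathcal B_{2n}((1-\eta)K,\cdot)$, which is the original problem at the other pole, so checking \eqref{eq:rank-global-jacobi} ``directly'' for these fields is circular. Moreover, even when $W'$ vanishes at the poles, $\chi_\epsilon W'\not\to W'$ in $C^1$: the term $W'\,D\chi_\epsilon$ stays bounded but not small on the annulus. So ``convergent in a $C^1$-weighted sense, and $\mathcal B_{2n}$ is controlled'' requires a weighted bound on $\mathcal B_{2n}$ that the proposal never states.

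\textbf{The missing ingredient is a local density bound for the Hessian.} The proof of Proposition~\ref{gen:rig:prop:second-variation} gives more than the $C^1$ bound. It shows that $\mathcal B_h(U,W)$ is an absolutely convergent integral of a pointwise bilinear density dominated by $C(1+\kappa^{-1})(|U|+|DU|)(|W|+|DW|)\,d\sigma_\Sigma$, where $\kappa=|N_b|$; see \eqref{gen:eq:general-affine-density}. Since $D_zG_\pm(0)=\mp J$ has full rank $2n$, the sublevel bound \eqref{gen:eq:global-kappa-sublevel} holds with $r=2n$. Write the difference by bilinearity as $\mathcal B_{2n}((1-\chi_\epsilon)W_1,W_2)+\mathcal B_{2n}(\chi_\epsilon W_1,(1-\chi_\epsilon)W_2)$. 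Its densities live on $\{\kappa<2\epsilon\}$, and their derivative factors are $O(\epsilon^{-1})$. Hence, for \emph{arbitrary} $C^1$ fields,
\[
  \bigl|\mathcal B_{2n}(W_1,W_2)-\mathcal B_{2n}(\chi_\epsilon W_1,\chi_\epsilon W_2)\bigr|
  \le\frac{C}{\epsilon^{2}}\|W_1\|_{C^1}\|W_2\|_{C^1}
     \int_{\{0<\kappa<2\epsilon\}}(1+\kappa^{-1})\,d\sigma_\Sigma
  \le C\epsilon^{2n-3}\|W_1\|_{C^1}\|W_2\|_{C^1}.
\]
This is \eqref{gen:eq:abstract-hessian-cutoff}, and no decomposition is needed. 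This is Proposition~\ref{gen:prop:rank-global-jacobi}. Combined with your Step~1 applied to $\chi_\epsilon W_i$ and the energy convergence $\chi_\epsilon\varphi_{W}\to\varphi_W$ in $\cE_n$, it proves the corollary.

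\textbf{The $n=2$ difficulty in Step 2 comes from a miscount.} Near a pole, $d\sigma_H=|G_\pm|\,dz\approx r^{2n}\,dr\,d\omega$ with $r=|z|$, by \eqref{eq:canonical-pole-orders}. The weight $r^{2n-1}\,dr\,d\omega$ you used is $d\sigma_\Sigma$. With the correct weight, the commutator term is $\epsilon^{-2}\int_\epsilon^{2\epsilon}r^{-2}r^{2n}\,dr\sim\epsilon^{2n-3}$, which tends to $0$ already for $n=2$. Ordinary linear cutoffs therefore suffice on both the energy side and the Hessian side.

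Your Step~1 matches Proposition~\ref{gen:prop:regularvariation}. Your derivation of \eqref{eq:main-hessian} from stationarity matches the paper.
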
}

{The corollary includes variations which move the two
characteristic poles and imposes no vanishing condition on surface
fields there. Its proof in Section~\ref{sec:pansu-hessian} also gives
$\chi_\varepsilon\varphi_W\to\varphi_W$ in $\cE_n$ for the pole
cutoffs used there.}
{The general $C^2$ theorem is proved in
Section~\ref{sec:general-variation}; Section~\ref{sec:singular}
establishes the Pansu-sphere corollary.}

{For the spectral result, set $I=(-\pi/2,\pi/2)$ and
$\NN_0=\{0,1,2,\ldots\}$. Write $d\sigma_H$ for horizontal
perimeter measure and set
\[
  \mathcal X=L^2(\PSphere,d\sigma_H).
\]
Elements of Lebesgue and Sobolev spaces are identified almost
everywhere. In the regular coordinates
$(u,\zeta)\in I\times\Sphere$, the Jacobi expression and perimeter
measure are
\[
  L=-\Delta_b^{\PSphere}-(2n-2)\sec^2u,
  \qquad
  d\sigma_H=\cos^{2n}u\,du\,d\omega,
\]
where $\Delta_b^{\PSphere}$ is nonpositive. We use $L$ for the
Friedrichs operator of $Q$. All Hilbert spaces, Jacobi forms and
spectral eigenfunctions below are over $\RR$.}

{Let
\[
  \mathcal I=\{(d,m)\in\NN_0^2:0\le m\le d,\ d-m\in2\NN_0\}.
\]
For $(d,m)\in\mathcal I$ and $\Lambda\ge0$, put
\[
  \Lambda_{d,m}=d(d+2n-2)-m^2,
  \qquad
  \varrho(\Lambda)
  =\frac{1+\sqrt{1+4((n-1)(n-2)+\Lambda)}}2,
\]
and write $\varrho_{d,m}=\varrho(\Lambda_{d,m})$.
These parameters arise from the angular decomposition and
the radial P\"oschl--Teller reduction.}

\Needspace{9\baselineskip}
\PansuResultBookmark{2}{thm:main-spectrum}{Theorem}
{\begin{theorem}[Jacobi spectrum]
  \label{thm:main-spectrum}
  The Friedrichs Jacobi operator $L$ of the unit Pansu sphere
  $\PSphere\subset\Hn$ has compact resolvent and
  \begin{equation}
    \label{eq:main-spectrum}
    \Spec(L)
    =\left\{(\varrho_{d,m}+k)^2-n^2:
      (d,m)\in\mathcal I,\ k\in\NN_0\right\}.
  \end{equation}

  On the energy space, the Morse index is one and
  the nullity is $2n+1$.
  The kernel consists exactly of the normalized normal
  components of infinitesimal Heisenberg translations.
\end{theorem}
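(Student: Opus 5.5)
We separate variables in the regular coordinates $(u,\zeta)\in I\times\Sphere$ and reduce the problem to a family of one-dimensional Pöschl--Teller problems. The sub-Laplacian $\Delta_b^{\PSphere}$ is built from the $u$-derivative and the CR sub-Laplacian of $\Sphere$, with $u$-dependent weights. So we first decompose $\mathcal X=L^2(\cos^{2n}u\,du\,d\omega)$ into the joint eigenspaces of $\DeltaSphere$ and of the Reeb (Hopf) derivative on $\Sphere$. These are the real spherical harmonics of degree $d$ and Hopf frequency $m$, with $(d,m)\in\mathcal I$. On such a space $\DeltaSphere$ acts by $-\Lambda_{d,m}=-(d(d+2n-2)-m^2)$; this is the classical computation cited from \cite{Folland,CowlingKlimaSikora}. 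Since the closed form $Q$ of Definition~\ref{def:energy} commutes with the $U(n)$-action and with the Hopf rotation, the form domain $\cE_n$ splits orthogonally along this decomposition. The Friedrichs operator $L$ is therefore the orthogonal sum of radial operators $L_{d,m}$ acting on $L^2(I,\cos^{2n}u\,du)$.

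Next we conjugate each radial operator by $f=\cos^{-n}u\,g$. This turns it into a Schrödinger operator on $L^2(I,du)$ of the form
\[
-g''+\frac{\varrho(\varrho-1)}{\cos^2u}\,g-n^2g ,
\]
where the $\sec^2u$ coefficient collects three contributions: the angular eigenvalue $\Lambda_{d,m}$, the potential $-(2n-2)\sec^2u$, and the conjugation term $n(n-1)\sec^2u$. Checking that this coefficient equals $(n-1)(n-2)+\Lambda_{d,m}$, which is exactly what the definition of $\varrho$ encodes, is a routine calculation. The trigonometric Pöschl--Teller operator $-\partial_u^2+\varrho(\varrho-1)\sec^2u$ has Friedrichs eigenvalues $(\varrho+k)^2$, $k\in\NN_0$. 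The eigenfunctions are $\cos^\varrho u$ times Gegenbauer polynomials $C_k^{(\varrho)}(\sin u)$, and they form a complete orthogonal basis. This gives \eqref{eq:main-spectrum}.

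The main obstacle is identifying the Friedrichs domain in each sector with the principal (regular) solution at the endpoints $u=\pm\pi/2$, which are the poles. When $\varrho\ge 3/2$ the endpoint is limit point and there is nothing to choose. Small sectors, such as $(d,m)=(0,0)$ with $\varrho=n-1$ or $(1,1)$, may instead be limit circle when $\varrho<3/2$. There we must show that the closure of $Q$ on $\cE_n$ selects the $\cos^{\varrho}u$ behaviour and not $\cos^{1-\varrho}u$. We would do this with a Hardy inequality near $u=\pm\pi/2$, showing that finite energy forces $g=o(\cos^{1/2}u)$. Compact resolvent follows because the eigenvalues $(\varrho_{d,m}+k)^2-n^2$ tend to infinity with finite multiplicities: $\varrho_{d,m}\to\infty$ as $d\to\infty$, since $\Lambda_{d,m}\ge d(2n-2)$, and only finitely many $(d,m,k)$ lie below any given level.

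For the index and nullity we check directly which triples give a nonpositive value. Since $\varrho_{d,m}+k\ge\varrho_{0,0}=n-1$, a negative eigenvalue needs $(\varrho+k)<n$. This happens only for $(0,0,0)$, giving $-(2n-1)$ with multiplicity one, and there is no $k=1$ negative mode because $n-1+1=n$. The zero eigenvalues are:
\begin{itemize}
\item $(0,0,1)$, with multiplicity one;
\item $(d,m)=(1,1)$ with $k=0$, where $\Lambda=2n-2$ and $\varrho=n$, contributing $\dim\mathcal H_{1,1}=2n$.
\end{itemize}
The sector $(2,0)$ has $\varrho>n$, so it contributes nothing. This gives index one and nullity $2n+1$. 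Finally, each normalized normal speed of a Heisenberg translation (the $2n$ horizontal ones and the vertical one) lies in $\ker L$, because translations preserve $\mathcal F_{2n}$ and hence $Q$ vanishes on them by Corollary~\ref{cor:intro-hessian}. These $2n+1$ speeds are linearly independent. Since the nullity is exactly $2n+1$, they span the kernel.
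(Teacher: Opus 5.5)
Your spectral computation follows the paper's route. You split along the real Hopf-frequency harmonics $\cH_{d,m}$, conjugate by $\cos^n u$ to the P\"oschl--Teller form \eqref{eq:poschl-teller}, and use the endpoint Hardy inequality, which forces $y=\cos^n u\,\varphi$ into $H_0^1$. Then you count the triples with $\varrho_{d,m}+k\le n$. The paper does not do a limit-point/limit-circle analysis. It checks directly that each Gegenbauer function lies in $\cE_n$ and satisfies the weak eigen-equation, then invokes completeness. The only limit-circle sector is $(d,m)=(0,0)$ with $n=2$, since $(1,1)$ has $\varrho=n\ge2$. The spectrum, compact resolvent, index one and nullity $2n+1$ are fine as you argue them.

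The gap is in the final step, placing the translation speeds in $\ker L$. Applying Corollary~\ref{cor:intro-hessian}, via \eqref{eq:main-hessian}, to a translation flow gives only $Q[\varphi_K]=0$. Polarizing over all translations shows that the span $\mathcal T$ of the $2n+1$ speeds is $Q$-isotropic. Because $Q$ is indefinite, this does not imply kernel membership.
\begin{itemize}
\item If $\psi_+$ is a positive eigenfunction, then $c_1\sec u+c_2\psi_+$ is $Q$-null for suitable nonzero $c_1,c_2$, yet it is not in $\ker L$.
\item A $(2n+1)$-dimensional isotropic subspace could consist of $2n$ kernel directions plus such a vector, so your dimension count does not close the argument.
\end{itemize}
What you need is $Q(\varphi_K,\psi)=0$ for every $\psi\in\cE_n$. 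The bilinear identity \eqref{eq:rank-global-jacobi} does not give this directly, because $\mathcal B_{2n}(K,W)$ is defined through affine families $\Id+rK+sW$, not through the translation flow. You would need a mixed chain-rule argument like \eqref{gen:eq:general-dilation-mixed}, with stationarity killing the acceleration term.

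A cleaner repair uses results already in the paper:
\begin{itemize}
\item Lemma~\ref{gen:lem:weak-translation}(ii) gives $Q(\varphi_K,\psi)=0$ for $\psi\in C_c^1(\PSphere^*)$.
\item Proposition~\ref{prop:poleexpansion} gives $\varphi_K\in\cE_n$.
\item Density of $C_c^\infty(I\times\Sphere)$ in $\cE_n$ (Proposition~\ref{prop:closedform}) and the representation theorem then yield $\varphi_K\in\operatorname{Dom}(L)$ with $L\varphi_K=0$.
\end{itemize}
You also assert linear independence of the $2n+1$ speeds without proof. The paper sidesteps both points by computing $\varphi_{K_a+cT}=\langle v,\zeta\rangle_{\RR^{2n}}+c\tan u$ explicitly. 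This visibly exhausts the kernel $\{c\tan u+\langle v,\zeta\rangle\}$ already found from the eigenbasis.
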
}

{For a Pansu sphere of radius $R>0$, its ambient index is the
supremum of the dimensions of subspaces of $C^2$ ambient fields on
which the Hessian of $\PH-(2n/R)V$ is negative definite.}

{The global Hessian identity, spectral index, and dilation
pairing together give the following consequence.}

\PansuResultBookmark{2}{cor:intro-stability}{Corollary}
\begin{corollary}[Stability of Pansu spheres]
  \label{cor:intro-stability}
  For $n\ge2$, every Pansu sphere is stable under volume-preserving
  jointly $C^3$ ambient families and flows of $C^2$ ambient fields.
  Its ambient Hessian has index one. For every $C^1$ surface
  field $W$ satisfying $\delta V[W]=0$, this Hessian vanishes on
  $(W,W)$ if and only if $\varphi_W$ is the normalized normal
  component of an infinitesimal Heisenberg translation.
\end{corollary}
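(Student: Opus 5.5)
The plan is to combine Corollary~\ref{cor:intro-hessian} with Theorem~\ref{thm:main-spectrum} by the standard index-one argument for constrained stability.

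\emph{Reduction to the unit sphere.} Heisenberg dilations $\delta_R$ scale horizontal perimeter by $R^{2n+1}$ and volume by $R^{2n+2}$. Hence $\PH-(2n/R)V$ on the sphere of radius $R$ pulls back to $R^{2n+1}(\PH-2nV)$ on the unit sphere. Since $\delta_R$ maps jointly $C^3$ families, $C^2$ flows and $C^1$ surface fields to objects of the same class, and preserves $\delta V[W]=0$, it suffices to treat $R=1$. Left translations also preserve everything, so we may take the sphere centred at the origin.

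\emph{Index one.} By Corollary~\ref{cor:intro-hessian}, the ambient Hessian is $W\mapsto Q[\varphi_W]$. By Theorem~\ref{thm:main-spectrum}, $Q$ is negative on exactly a one-dimensional subspace of $\cE_n$, namely the ground state $\varphi_0$. Therefore any subspace of ambient fields on which the Hessian is negative definite injects, through $W\mapsto\varphi_W$, into a subspace of $\cE_n$ on which $Q<0$, and so has dimension at most one. The dilation field $Z$ shows the index is at least one. Its normalized speed $\varphi_Z$ satisfies
\[
  \frac{d^2}{d\tau^2}\bigl[\tau^{2n+1}\PH-2n\tau^{2n+2}V\bigr]\Big|_{\tau=1}<0,
\]
using $\PH(\PSphere)=\frac{2n+2}{2n+1}\cdot\ldots$; concretely, first-order stationarity gives $(2n+1)\PH=2n(2n+2)V$, from which the sign follows. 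Along this dilation path the first derivative vanishes, so the second derivative equals the Hessian.

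\emph{Stability on $\delta V[W]=0$.} We have $\delta V[W]=\int\ip{W}{N}\,d\sigma=\int\varphi_W\,d\sigma_H$, since $|N_b|\,d\sigma=d\sigma_H$; so the constraint reads $\varphi_W\perp1$ in $\mathcal X$. The key claim is that $Q\ge0$ on $1^\perp\cap\cE_n$. I would prove it by the usual lemma: if $Q(\psi,\psi)<0$ for some $\psi\perp1$ and $Q(\varphi_Z,\varphi_Z)<0$, then negative definiteness on $\spn\{\psi,\varphi_Z\}$ (which would contradict index one) follows once
\[
  Q(\varphi_Z,\psi)=\ip{L\varphi_Z}{\psi}_{\mathcal X}=0.
\]
This holds if $L\varphi_Z$ is a constant. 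To check it, note that $\varphi_Z$ depends only on $u$. The identity $L\varphi_Z=c$ comes from differentiating the family of Pansu spheres $\PSphere_R$, each having constant $H=2n/R$; the usual formula $L\varphi=-\delta H$ gives $L\varphi_Z=2n$ in the paper's sign conventions. This requires $\varphi_Z$ to lie in the operator domain, which I would verify from the explicit radial formula near the poles. It is this dilation pairing that the paper mentions.

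Next, the constrained Hessian is nonnegative. If it vanishes on $(W,W)$ with $\varphi_W\perp1$, then $\varphi_W$ minimizes $Q$ on $1^\perp$. Writing $\varphi_W=a\varphi_0+\eta$, where $\eta$ lies in the spectral subspace $[0,\infty)$, we have $Q[\varphi_W]=a^2\lambda_0+Q[\eta]$. One must show $a=0$ and $\eta\in\ker L$. The Lagrange condition gives $L\varphi_W=\mu\cdot1$. Pairing with $\varphi_Z$ yields $\mu\int\varphi_Z\,d\sigma_H=Q(\varphi_W,\varphi_Z)=2n\int\varphi_W\,d\sigma_H=0$, and $\int\varphi_Z\,d\sigma_H=\delta V[Z]\neq0$, so $\mu=0$ and $\varphi_W\in\ker L$. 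By Theorem~\ref{thm:main-spectrum} this kernel is exactly the translation speeds, which have zero volume variation. Conversely, these give zero Hessian.

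\emph{Passing from the constrained Hessian to stability of families.} For a volume-preserving family $\Phi_\tau$, the second derivative of $\PH$ equals the second derivative of $\PH-2nV$. Since $V$ is constant along the family, this equals $Q[\varphi]$ by \eqref{eq:main-hessian}, where $\varphi\perp1$ because $\frac{d}{d\tau}V=0$; hence it is $\ge0$. I expect the main obstacle to be justifying the minimization/Lagrange step in the form domain, i.e.\ that a zero of $Q$ on $1^\perp$ is a constrained critical point. This follows by expanding $Q[\varphi_W+\epsilon\psi]\ge0$ for $\psi\in1^\perp\cap\cE_n$, and then extending to all of $\cE_n$ by adding a multiple of $\varphi_Z$.
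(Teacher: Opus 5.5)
Your proposal is correct and follows essentially the paper's own argument. Index at most one comes from Theorem~\ref{thm:main-spectrum} with Corollary~\ref{cor:intro-hessian}; the dilation speed is a negative direction that is $Q$-orthogonal to the volume constraint because $L\varphi_D$ is constant; the negative-plane argument gives $Q\ge0$ there; the first-order condition extends to all of $\cE_n$ through $\cE_n=\ker\ell\oplus\spn\{\varphi_D\}$ and the representation theorem; and dilation and translation invariance finish the proof. There is one harmless sign slip: with the paper's convention $\dot H_\Sigma=L\varphi_W+Z_WH_\Sigma$ one has $L\varphi_D=-2n$, not $+2n$ (with $+2n$ you would get $Q[\varphi_D]=2n\,\delta V[D]>0$, contradicting your homogeneity computation), but your argument only uses that $L\varphi_D$ is constant.
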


{Pole estimates identify $\mathcal B_{2n}$ on the Pansu sphere with
the closed Jacobi form $Q$. An endpoint Hardy estimate identifies
the domain of $Q$. Real Hopf-frequency harmonics
then reduce the operator to radial P\"oschl--Teller problems with
Gegenbauer eigenfunctions.}

\begin{AIuse}
The authors developed the geometric approach, introduced the
parametrization used for the spectral analysis, and organized and
wrote the manuscript. ChatGPT 5.6 Sol identified relevant work by
Hladky and Pauls \cite{HP} and Montefalcone
\cite{MontefalconeSpectrum}, and assisted with spectral calculations
using the authors' parametrization. It also helped work out the
details of the authors' approach to general second variation and
complete the proof of the Hessian identity across the characteristic
poles, for which the authors had developed part of the argument.
The authors independently re-derived and verified all AI-assisted
mathematical arguments and take full responsibility for the content of
the article.
\end{AIuse}
\section{General second variation and Jacobi pairings}
\label{sec:general-variation}

{For complex coordinate vectors, use the Hermitian product
$\ip zw=\sum_{j=1}^n z_j\overline{w_j}$, linear in the first argument.}
The Heisenberg group $\Hn=\CC^n\times\RR$ has product
{$(z,t)(w,s)=(z+w,t+s+\Im\ip zw)$}
and Haar measure $dV=dz\dd t$.  Writing $z_j=x_j+iy_j$, we use the fixed
left-invariant frame
\[
  X_j=\partial_{x_j}+y_j\partial_t,\qquad
  Y_j=\partial_{y_j}-x_j\partial_t,\qquad T=\partial_t,
\]
with $[X_j,Y_j]=-2T$.
{Set $\Theta=dt-\Im\ip z{dz}$} and $\xi=\ker\Theta$.
{Define $J$ by $JX_j=Y_j$ and $JY_j=-X_j$, define the Levi metric on
$\xi$ by $\LTheta=\tfrac12d\Theta(\cdot,J\cdot)$, and set
$\gTheta=\LTheta+\Theta^2$.
The frame $X_1,Y_1,\ldots,X_n,Y_n,T$ is orthonormal for
$\gTheta$.  {On the horizontal bundle $\xi$, let $\nabla^\xi$
be the connection determined by $\nabla^\xi_U X_j=\nabla^\xi_U Y_j=0$
for every vector field $U$. It preserves $\LTheta$ and $J$.}
For real tangent vectors, $\langle\cdot,\cdot\rangle$ denotes the
Riemannian metric $\gTheta$; on horizontal vectors identified with
$\CC^n$, it is the real part of the Hermitian product.}

Let $\Sigma\subset\Hn$ be an oriented two-sided $C^2$ hypersurface with
Riemannian unit normal $N$ and Riemannian surface measure $d\sigma_{\Sigma}$.
Denote the horizontal projection of $N$ by $N_b$.
On the regular set
$\Sigma^*=\Sigma\setminus \{N_b=0\}$, put
\[
  N_b=N-\ip NT T,\qquad
  e_{2n}=-\frac{N_b}{\abs{N_b}},\qquad
  \alpha=\frac{\ip NT}{\abs{N_b}},\qquad
  d\sigma_H=\abs{N_b}\,d\sigma_{\Sigma}.
\]
Set $e_n=-Je_{2n}$, so $e_{2n}=Je_n$.  Thus
\[
  \frac{N}{\abs{N_b}}=-e_{2n}+\alpha T,\qquad
  T+\alpha e_{2n}\in T\Sigma.
\]
For a boundary, $N$ is outward and $e_{2n}$ is inward.
The horizontal tangent bundle and its $J$-invariant part are
\[
  \xi\cap T\Sigma^*=\xi'\oplus\RR e_n,\qquad
  \xi'=(\xi\cap T\Sigma^*)\cap J(\xi\cap T\Sigma^*).
\]

{Complete $e_n,e_{2n}$ locally to an $\LTheta$-orthonormal frame
$e_1,\ldots,e_{2n}$ with the first $2n-1$ vectors tangent to $\Sigma$.
On $\xi\cap T\Sigma^*$ define $J'$ by $J'|_{\xi'}=J$ and $J'e_n=0$.}

For $X,Y\in\xi\cap T\Sigma^*$, use the horizontal second fundamental
form and symmetric horizontal shape operator:
\[
  {\II(X,Y)=-\ip{\nabla^\xi_Ye_{2n}}X,\qquad
  \Shape(X)=-\nabla^\xi_Xe_{2n}+\alpha J'X.}
\]
{These definitions follow the conventions of
\cite[(2.8) and Proposition~2.2]{CCHY}. The symmetric operator
$\Shape$ also appears in Ritor\'e~\cite[p.~52]{RitoreCalibration}.}

Define the horizontal mean curvature (also called $p$-mean curvature) by
\[
  H_{\Sigma}=\sum_{a=1}^{2n-1}\II(e_a,e_a)=\tr\Shape.
\]

For an orthonormal frame $e_1,\ldots,e_{2n-1}$ of $\xi\cap T\Sigma^*$,
the horizontal tangential gradient is
$\nabla_b^{\Sigma}\psi=\sum_{a=1}^{2n-1}(e_a\psi)e_a$. Define the
modified horizontal tangential sub-Laplacian
$\Delta_b^{\Sigma}$ by
\[
  \int_{\Sigma}\ip{\nabla_b^{\Sigma}\psi}{\nabla_b^{\Sigma}\phi}\dd\sigma_H
  =-\int_{\Sigma}(\Delta_b^{\Sigma}\psi)\phi\dd\sigma_H
\]
for real $\psi$ and real compactly supported tests $\phi$.
Thus $\Delta_b^{\Sigma}$ is nonpositive.
The horizontal tangential divergence with respect to $d\sigma_H$ is denoted by
$\operatorname{div}_{d\sigma_H}$, so
$\Delta_b^{\Sigma}=\operatorname{div}_{d\sigma_H}\nabla_b^{\Sigma}$.

Under $(z,t)\mapsto(z,-t/2)$, our horizontal frame agrees with Montefalcone's.
Our divergence and $\Delta_b^\Sigma$ correspond to $\mathcal D_{HS}$
and $\mathcal L_{HS}$ in \cite[Definition~1.16]{MontefalconeStability},
respectively; $\mathcal L_{HS}$ is the drift-modified operator associated
with $\Delta_{HS}$.
In $H_1$, \cite[(2.12)--(2.14)]{CHMY} gives
$\nabla_b^\Sigma=\nabla_b^t$ and $\Delta_b^\Sigma=\Delta_b^t+2\alpha e_1$.

In the regular second variation formula of Hladky and Pauls
\cite[Lemma~7.2 and Section~8.2]{HP}, the outward horizontal normal
$\nu$ equals $-e_{2n}$. Accordingly, define the Jacobi potential by
\[
  q_{\Sigma}\coloneqq \abs{\II}^2+4e_n\alpha+4\alpha^2.
\]
By the symmetric and skew-symmetric decomposition of the horizontal
second fundamental form~\cite[Propositions~2.1 and~2.2]{CCHY},
$|\II|^2=|\Shape|^2+2(n-1)\alpha^2$. Hence
\begin{equation}
  \label{eq:potential-symmetric-shape}
  q_{\Sigma}=\abs{\Shape}^2+4e_n\alpha+2(n+1)\alpha^2.
\end{equation}

{The rank-based sublevel estimates below justify the first
and second parameter derivatives of horizontal perimeter along the
stated variation families, including at characteristic points.}

{A surface field $W$ is a vector field along $\Sigma$,
that is, a section of $T\Hn|_\Sigma$; it need not be tangent to
$\Sigma$. Its $C^1$, $C^{1,1}$, or $C^2$ regularity is understood
componentwise in the fixed frame $X_j,Y_j,T$. Such a field admits
an ambient extension of the same regularity near $\Sigma$; when
$\Sigma$ is compact, the extension may be chosen compactly supported.
For global integrals on noncompact $\Sigma$, we take $W$ compactly
supported.}

For a surface field $W$, put
\[
  \varphi_W=\frac{\langle W,N\rangle}{|N_b|},\qquad
  \delta V[W]=\int_\Sigma\varphi_W\,d\sigma_H
\]
when $\Sigma$ bounds a domain and $N$ is outward.
{$\varphi_W$ is a scalar on $\Sigma^*$, and, with the
orientation induced by $N$, its associated density satisfies}
\[
  {\varphi_W\,d\sigma_H
  =\langle W,N\rangle\,d\sigma_{\Sigma}
  =\left.(\iota_W dV)\right|_{T\Sigma}\qquad\text{on }\Sigma^*.}
\]
{The last expression is defined across the characteristic set,
so the first-variation density extends there even though $\varphi_W$
itself is defined only on $\Sigma^*$.} Write
$\nabla_b=\nabla_b^\Sigma$, and let $\operatorname{div}_H$ be divergence
with respect to $d\sigma_H$ on the full tangent bundle of $\Sigma^*$.
Thus $\Delta_b=\operatorname{div}_H\nabla_b$ is nonpositive.
With the potential in \eqref{eq:potential-symmetric-shape}, set
\[
  L=-\Delta_b-q_\Sigma,\qquad
  Q_\Sigma(\varphi,\psi)=\int_\Sigma
  (\langle\nabla_b\varphi,\nabla_b\psi\rangle-q_\Sigma\varphi\psi)
  \,d\sigma_H
\]
{for speeds $\varphi,\psi$ whose supports are compact subsets
of $\Sigma^*$. Let
$\Omega\subset\Hn$ be a bounded domain with $C^2$ boundary
$\Sigma$, and put $\mathcal F_h=\PH-hV$. For
$C^{1,1}$ surface fields $U,W$, choose compactly supported
$C^{1,1}$ extensions $\widetilde U,\widetilde W$ to $\Hn$.
In the fixed coordinates $(z,t)$, set}
\[
  {\Phi_{r,s}=\Id+r\widetilde U+s\widetilde W,
    \qquad \Omega_{r,s}=\Phi_{r,s}(\Omega).}
\]
{For sufficiently small $r,s$, $\Phi_{r,s}$ is a
diffeomorphism. Define the affine second variation by}
\[
  {\mathcal B_h(U,W)
    =\left.\frac{\partial^2}{\partial r\partial s}\right|_{(0,0)}
      \bigl[\PH(\partial\Omega_{r,s})-hV(\Omega_{r,s})\bigr].}
\]
{This definition is independent of the chosen extensions.
Proposition~\ref{gen:rig:prop:second-variation} shows that
$\mathcal B_h$ extends continuously to $C^1$ surface fields.}

\subsection{Rank bounds and differentiation}
\label{gen:app:rank-proof}
{Near a characteristic point a $C^2$ hypersurface is a vertical
graph $t=f(z)$. Let $F(z,t)=t-f(z)$ and define its ambient horizontal
gradient by
\[
  \nabla_HF=\sum_{j=1}^n\bigl((X_jF)X_j+(Y_jF)Y_j\bigr).
\]
Put $G=-\nabla_HF$ and $g=|G|$. Explicitly,
$G=\sum_{j=1}^n\bigl((\partial_{x_j}f-y_j)X_j+
(\partial_{y_j}f+x_j)Y_j\bigr)$; in graph coordinates we regard $G$
as an $\RR^{2n}$-valued map in this frame. Put
$\sigma=\operatorname{sgn}\langle N,T\rangle$. The horizontal cofactor
for the chosen normal is $-\sigma G$, since
$N=\sigma(T-G)/\sqrt{1+g^2}$. Thus $d\sigma_H=g\,dz$ and
$|\alpha|=g^{-1}$.
For fixed left-invariant horizontal vectors $U,V$, since $TF=1$ and
$[U,V]=-2\langle JU,V\rangle T$, we have}
\[
  {\langle\nabla^\xi_U G,V\rangle
  -\langle\nabla^\xi_V G,U\rangle
  =-[U,V]F=2\langle JU,V\rangle.}
\]
{Here $D_zG$ is the $2n\times2n$ real Jacobian of this map,
with rows and columns ordered by $x_1,y_1,\ldots,x_n,y_n$.
The fixed frame is $\nabla^\xi$-parallel and $G$ is independent
of $t$ in this frame, so in graph coordinates this identity becomes}
\begin{equation}
  \label{gen:rig:eq:fixed-curl}
  {D_zG-(D_zG)^\top=2J.}
\end{equation}

{The rank argument in
\cite[proof of Theorem~D, (6.9)--(6.10)]{CHY} now follows from
\eqref{gen:rig:eq:fixed-curl}: for $v,w\in\ker D_zG$, one has
$\langle Jv,w\rangle=0$. Thus $\ker D_zG$ is isotropic for the
nondegenerate symplectic form $\langle J\cdot,\cdot\rangle$, so
$\dim\ker D_zG\le n$ and $\operatorname{rank}D_zG\ge n$.}

For a smooth ambient variation, let $f_s=F_s\circ\iota$ be a deformed
chart and let $G_s$ be its horizontal cofactor. If $G_0(x_0)=0$ and
$D_xG_0(x_0)$ is invertible, the implicit function theorem gives a
unique nearby zero $x(s)$ of $G_s$ with
\[
  x(0)=x_0,\qquad
  \left.\partial_{s_i}x(s)\right|_{s=0}
  =-\bigl(D_xG_0(x_0)\bigr)^{-1}
    \left.\partial_{s_i}G_s(x_0)\right|_{s=0}.
\]
The characteristic point of the deformed surface is $f_s(x(s))$;
$f_s(x_0)=F_s(f_0(x_0))$ is the image of the original point and is
characteristic only when $G_s(x_0)=0$. At the Pansu poles,
\eqref{eq:exact-pole-cofactors} gives
$G_\pm(z)=(r/\sqrt{1-r^2}\,\Id\mp J)z$, where $r=|z|$.
Consequently $D_zG_\pm(0)=\mp J$ is invertible since $J^2=-\Id$;
the formula above applies at both poles.

{To analyze the second parameter derivative of the
horizontal cofactor norm $|G|$, first record the derivatives of the
Euclidean norm.} For $x\ne0$, set $\widehat x=x/|x|$ and
$\Pi_xv=v-\langle\widehat x,v\rangle\widehat x$. Differentiating the
normalization in a direction $w$ gives
\[
  D\widehat x[w]=\frac{w}{|x|}-\frac{x\langle x,w\rangle}{|x|^3}
               =\frac{\Pi_xw}{|x|}.
\]
Consequently,
\[
  \begin{aligned}
    D^2|x|[v,w]
    &=\langle D\widehat x[w],v\rangle
      =\frac{\langle v,w\rangle}{|x|}
       -\frac{\langle x,v\rangle\langle x,w\rangle}{|x|^3}\\
    &=\frac{\langle\Pi_xv,\Pi_xw\rangle}{|x|}.
  \end{aligned}
\]
Thus the first and second norm derivatives are
\begin{equation}
  \label{gen:eq:norm-first-second-derivatives}
  D\abs x[v]=\ip{\widehat x}v,
  \qquad
  D^2\abs x[v,w]=\frac{\ip{\Pi_xv}{\Pi_xw}}{\abs x}.
\end{equation}
{For a parameter-dependent map $G$, second parameter
derivatives of $|G|$ have at most a $|G|^{-1}$ singularity.
Lemma~\ref{gen:lem:rank-differentiation} shows that this factor is
locally uniformly integrable when $\operatorname{rank}D_zG\ge2$ at
the zeros.}

Let $U\Subset\RR^d$ be a bounded domain, $d\ge2$, and let
$\mathcal P\subset\RR^m$ be a neighborhood of $0$.
Suppose $G(s,z)\in\RR^d$ and $\beta(s,z)\in\RR$ are defined on a
neighborhood of $\mathcal P\times\overline U$, with
$D_s^jG,D_s^j\beta$ ($j=0,1,2$) and $D_zG$ jointly continuous.
For a fixed weight $w\in L^\infty(U)$, set
$I(s)=\int_U w(z)\beta(s,z)|G(s,z)|\dd z$.

\begin{samepage}
  \begin{lemma}
    \label{gen:lem:rank-differentiation}
    Assume $\operatorname{rank}D_zG(0,z)\ge r$ at every zero
    in the closure $\overline U$ for $2\le r\le d$. After shrinking
    $\mathcal P$, the zero set of
    $G(s,\cdot)$ is null for every $s$, and, uniformly in $s$,
    \begin{equation}
      \label{gen:eq:parameter-rank-caps}
      \int_{U\cap\{|G(s,\cdot)|<\rho\}}|G(s,z)|^{-\gamma}\dd z
      \le C_\gamma\rho^{r-\gamma},\qquad 0\le\gamma<r,
    \end{equation}
    for all sufficiently small $\rho>0$. Moreover,
    \[
      I\in C^2(\mathcal P),\qquad
      D^jI(s)=\int_U w(z)D_s^j(\beta|G|)(s,z)\dd z\quad(j=1,2).
    \]
    The derivative densities are defined almost everywhere and belong to $L^1(U)$.
  \end{lemma}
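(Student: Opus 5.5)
The plan is to prove the sublevel estimate \eqref{gen:eq:parameter-rank-caps} first, by a local straightening argument near the zeros, and then to derive $C^2$ regularity of $I$ by dominated convergence.

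\emph{Localization.} The zero set $Z_0=\{z\in\overline U: G(0,z)=0\}$ is compact. At each $z_0\in Z_0$, choose $r$ rows of $D_zG(0,z_0)$ and $r$ coordinates $z'$ on which the corresponding $r\times r$ minor is invertible. Write $z=(z',z'')$ and let $P$ be the projection of $\RR^d$ onto the selected $r$ components. By joint continuity of $D_zG$, the minor of $D_{z'}(PG)(s,z)$ stays uniformly invertible on a neighborhood $\mathcal P_0\times B(z_0,\delta)$.

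\emph{Change of variables.} On this neighborhood I use
\[
(z',z'')\mapsto (y,z'')=(PG(s,z),z'').
\]
This map is $C^1$ in $z$, its Jacobian determinant is bounded above and below uniformly in $s$, and it is injective on a small enough ball, again by uniform invertibility and the mean value theorem. Since $|G|\ge|PG|=|y|$, we get
\[
\int_{B\cap\{|G|<\rho\}}|G|^{-\gamma}\,dz\le\int_{B\cap\{|PG|<\rho\}}|PG|^{-\gamma}\,dz\le C\int_{|y|<\rho}|y|^{-\gamma}\,dy\cdot\operatorname{diam}(B)^{d-r}\le C\rho^{r-\gamma}.
\]
This uses $\gamma<r$. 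Cover $Z_0$ by finitely many such balls. On the complement $K$ of their union in $\overline U$, $|G(0,\cdot)|\ge 2c>0$, so after shrinking $\mathcal P$, $|G(s,\cdot)|\ge c$ on $K$ for all $s$. Hence for $\rho<c$ the sublevel set lies in the union of the balls, and \eqref{gen:eq:parameter-rank-caps} holds uniformly in $s$. Letting $\rho\to0$ with $\gamma=0$ shows that the zero set of $G(s,\cdot)$ is null for every $s$.

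\emph{Differentiation.} Away from zeros, $\beta|G|$ is $C^2$ in $s$. By \eqref{gen:eq:norm-first-second-derivatives},
\[
D_s(\beta|G|)=D_s\beta\,|G|+\beta\langle\widehat G,D_sG\rangle,
\]
which is bounded. Likewise $D_s^2(\beta|G|)$ is a sum of bounded terms plus $\beta\langle\Pi_G D_sG,\Pi_G D_sG\rangle/|G|$, so it is bounded by $C(1+|G(s,z)|^{-1})$ uniformly in $s$. The first derivative extends continuously across zeros only in the Lipschitz sense, so I argue through integrated identities rather than pointwise. For fixed $z$ off the null set $\{G(s,z)=0\}$, the function $s\mapsto\beta|G|$ is Lipschitz along segments, since $|G|$ is Lipschitz in $s$. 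Therefore
\[
I(s+h)-I(s)=\int_0^1\int_U w\,D_s(\beta|G|)(s+\tau h,z)[h]\,dz\,d\tau
\]
by Fubini, and continuity of the bounded integrand off the zero set plus dominated convergence give $DI$ continuous with the stated formula.

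\emph{Second derivative: the main obstacle.} Along a segment, $\tau\mapsto D_s(\beta|G|)(s+\tau h,z)$ is absolutely continuous even when the segment crosses a zero. Indeed, $\widehat G$ jumps only at isolated crossing times, which are measure zero, and the second derivative is integrable in $\tau$ with bound $|G|^{-1}$. I plan to avoid this pointwise issue by applying Fubini first. The uniform bound
\[
\sup_s\int_U|G(s,\cdot)|^{-1}\,dz<\infty,
\]
which follows from \eqref{gen:eq:parameter-rank-caps} with $\gamma=1<2\le r$, gives joint integrability of the second derivative over $(\tau,z)$. Continuity of $s\mapsto\int_U w\,D_s^2(\beta|G|)\,dz$ then follows from uniform integrability: split into $\{|G|<\rho\}$, whose contribution is at most $C\rho^{r-1}$ uniformly, and $\{|G|\ge\rho\}$, where the integrand converges uniformly as $s$ varies. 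If the absolute-continuity step across zeros proves delicate, the fallback is to regularize $|G|$ by $\sqrt{|G|^2+\epsilon^2}$, for which the same bounds hold uniformly in $\epsilon$, differentiate under the integral, and pass to $\epsilon\to0$ using the uniform caps.
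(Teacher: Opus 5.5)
Your sublevel estimate matches the paper's argument: the straightening $z\mapsto(PG(s,z),z'')$, the bound $|G|\ge|PG|$, a finite cover of the zeros, and a uniform lower bound on the complement. Your first-derivative step, via the Lipschitz fundamental theorem of calculus along segments, Fubini, and dominated convergence, is also sound.

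The second-derivative step, however, fails as you justify it. The map $\tau\mapsto D_s(\beta|G|)(s+\tau h,z)$ is \emph{not} absolutely continuous when the segment crosses a zero. At a crossing time $\tau_0$ with $D_sG[h]\ne0$, the term $\beta\langle\widehat G,D_sG[h]\rangle$ jumps by $2|\beta|\,|D_sG[h]|$, exactly as $\frac{d}{d\tau}|\tau|$ does. A single jump destroys absolute continuity, however small the jump set is. Your bound $|G|^{-1}\sim|\tau-\tau_0|^{-1}$ is also not integrable in $\tau$ there. Applying Fubini first does not avoid this: you still need $\int_0^1\partial_\tau F(\tau,z)\,d\tau=F(1,z)-F(0,z)$ for almost every $z$, and joint integrability of the a.e.\ derivative does not supply it. For example, take $G(s,z)=(z_1-s,0)$ on $U=(-1,1)^2$, which has rank one. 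Then $\int_U|G|\,dz=2(1+s^2)$ has second derivative $4$, while $D_s^2|G|=0$ off $\{z_1=s\}$; the entire second derivative sits in the jumps. So the hypothesis $r\ge2$ must enter this step, not only through integrability of $|G|^{-1}$.

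There are two ways to close the gap.
\begin{enumerate}[label=\textup{(\alph*)},leftmargin=2.2em]
  \item Show that for almost every $z$ the segment meets no zero at all. Your straightening, taken jointly, is a $C^1$ local diffeomorphism $\Psi(s,z)=(s,\chi_s(z))$, since $D_sG$ and $D_zG$ are jointly continuous. In each straightening ball, the set of $z$ with $G(s+\tau h,z)=0$ for some $\tau\in[0,1]$ lies in the $C^1$ image of an open subset of $\RR^{1+d-r}$ under $(\tau,b)\mapsto\pi_z\Psi^{-1}(s+\tau h,0,b)$. This image is null because $1+d-r<d$ when $r\ge2$. Off this null set the integrand is $C^1$ in $\tau$, and your Fubini argument goes through.
  \item Use your fallback, which is precisely the paper's proof. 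With $N_\varepsilon(G)=\sqrt{|G|^2+\varepsilon^2}$, the $s$-derivatives of order $j\le2$ of $\beta N_\varepsilon(G)$ are bounded by $C(1+|G|^{-1})$, uniformly in $0<\varepsilon\le1$. They differ from those of $\beta|G|$ by at most $C_\rho\varepsilon^2$ on $\{|G|\ge\rho\}$, and the caps bound the remaining contribution by $C(\rho^r+\rho^{r-1})$. Letting $\varepsilon\downarrow0$ and then $\rho\downarrow0$ gives locally uniform convergence of $I_\varepsilon$ and its first two derivatives, hence $I\in C^2$ with the stated formulas.
\end{enumerate}
The regularization is smooth across zeros, so it never meets the crossing issue. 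With it, your proof coincides with the paper's.
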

\end{samepage}
\begin{proof}
  Fix every zero $p$, relabel coordinates so that the rank minor
  $(\partial_{z_j}G_i(0,p))_{1\le i,j\le r}$ is invertible.
  The map
  \[
    \Psi(s,z)=(s,\chi_s(z)),\qquad
    \chi_s(z)=(G_1(s,z),\ldots,G_r(s,z),z_{r+1},\ldots,z_d)
  \]
  is $C^1$, and the determinant of $D\Psi(0,p)$ is the chosen nonzero minor.
  The inverse function theorem gives a local diffeomorphism.
  On a smaller fixed product $\mathcal P_p\times B_p$ with compact closure,
  each $\chi_s$ is injective with uniformly bounded inverse Jacobian.

  Write $(a,b)=\chi_s(z)$. The $b$ coordinates lie in a fixed bounded box
  $K\subset\RR^{d-r}$, and $|G(s,z)|\ge|a|$. Change of variables gives
  \[
    \begin{aligned}
      \int_{U\cap B_p\cap\{|G(s,\cdot)|<\rho\}}
        |G(s,z)|^{-\gamma}\,dz
      &\le C\int_K\int_{|a|<\rho}|a|^{-\gamma}\,da\,db\\
      &=C|K|\,|S^{r-1}|\int_0^\rho t^{r-1-\gamma}\,dt\\
      &=\frac{C|K|\,|S^{r-1}|}{r-\gamma}\rho^{r-\gamma},
        \qquad 0\le\gamma<r.
    \end{aligned}
  \]
  A finite subcover of the zero set, together with the uniform positivity of \(|G|\) on the compact complement, yields \eqref{gen:eq:parameter-rank-caps}. The cases $\gamma=0,1$ give null zero sets and integrability of $|G|^{-1}$.

  For $\varepsilon>0$, set $N_\varepsilon(G)=(|G|^2+\varepsilon^2)^{1/2}$
  and $I_\varepsilon=\int_Uw\beta N_\varepsilon(G)\,dz$.
  These integrals are $C^2$, and
  \[
    DN_\varepsilon(G)[v]=\frac{G\cdot v}{N_\varepsilon(G)},\qquad
    D^2N_\varepsilon(G)[v,w]
    =\frac{v\cdot w}{N_\varepsilon(G)}
    -\frac{(G\cdot v)(G\cdot w)}{N_\varepsilon(G)^3}.
  \]
  Write $G_i=\partial_{s_i}G$, $G_{ij}=\partial_{s_i}\partial_{s_j}G$
  and use the same notations for $\beta$. 
  The full product rule is
  \[
    \begin{aligned}
      \partial_{s_i}(\beta N_\varepsilon)
      &=\beta_iN_\varepsilon+\beta\frac{G\cdot G_i}{N_\varepsilon},\\
      \partial_{s_i}\partial_{s_j}(\beta N_\varepsilon)
      &=\beta_{ij}N_\varepsilon+\beta_i\frac{G\cdot G_j}{N_\varepsilon}
        +\beta_j\frac{G\cdot G_i}{N_\varepsilon}\\
      &\quad+\beta\left(
         \frac{G_i\cdot G_j+G\cdot G_{ij}}{N_\varepsilon}
        -\frac{(G\cdot G_i)(G\cdot G_j)}{N_\varepsilon^3}\right).
    \end{aligned}
  \]
  Since $N_\varepsilon\ge|G|$ and $|G|/N_\varepsilon\le1$,
  \[
    |DN_\varepsilon(G)[v]|\le|v|,\qquad
    |D^2N_\varepsilon(G)[v,w]|
       \le\frac{|v||w|}{N_\varepsilon}+\frac{|G|^2|v||w|}{N_\varepsilon^3}
       \le\frac{2|v||w|}{|G|}.
  \]
  Boundedness of the derivatives on compact sets now gives
  $|D_s^j(\beta N_\varepsilon(G))|\le C(1+|G|^{-1})$
  for $j\le2$ and $0<\varepsilon\le1$, away from the null zero set.
  On $\{|G|\ge\rho\}$, the density formulas are smooth
  in $\varepsilon^2$, with derivatives bounded uniformly in $s$.
  Hence
  \[
    |D_s^j(\beta N_\varepsilon(G))-D_s^j(\beta|G|)|
    \le C_\rho\varepsilon^2,\qquad j\le2.
  \]
  On $\{|G|<\rho\}$, the integrals of these densities and their
  limits are bounded by $C(\rho^r+\rho^{r-1})$.
  More explicitly, for any derivative $D_s^j$ of order $j\le2$,
  splitting the difference over $|G|\ge\rho$ and $|G|<\rho$ gives
  \[
    \sup_s\left|D_s^j I_\varepsilon(s)
        -\int_UwD_s^j(\beta|G|)\,dz\right|
    \le C_\rho\varepsilon^2+C(\rho^r+\rho^{r-1}),
  \]
  where the supremum is over a fixed compact set and,
  for each fixed $\rho>0$, $C_\rho$ is independent of $s$ and
  $\varepsilon$. First let
  $\varepsilon\downarrow0$, then $\rho\downarrow0$.
  Thus $I_\varepsilon$ and its first two derivatives converge locally uniformly, and hence we have $I\in C^2$.
\end{proof}

\begin{samepage}
  \begin{corollary}
    \label{gen:rig:lem:sublevel}
    {At a characteristic point $p$, $G=-\nabla_HF$ satisfies
    $r_p:=\operatorname{rank}D_zG(p)\ge n$.}
    {For every vertical graph domain
    $U\subset\RR^{2n}$ and $K\Subset U$,}
    \begin{equation}
      \label{gen:rig:eq:weighted-sublevel}
      \int_{K\cap\{g<\rho\}}g^{-\gamma}\,dz
      \le C_\gamma\rho^{n-\gamma}\qquad(0\le\gamma<n)
    \end{equation}
    for sufficiently small $\rho>0$. {In particular, the
    volume of $K\cap\{g<\rho\}$ is $O(\rho^n)$ and, for $n\ge2$,
    its $g^{-1}$ integral is $O(\rho^{n-1})$.}
    The characteristic set is null in graph coordinates,
    and $\Sigma^*$ is dense in $\Sigma$.
  \end{corollary}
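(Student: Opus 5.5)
The plan is to derive the statement directly from the fixed-frame curl identity \eqref{gen:rig:eq:fixed-curl} and Lemma~\ref{gen:lem:rank-differentiation}, applied with no parameter dependence.

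\emph{Rank bound.} At a characteristic point $p$ we have $G(p)=0$. Let $v,w\in\ker D_zG(p)$. Then \eqref{gen:rig:eq:fixed-curl} gives
\[
\langle 2Jv,w\rangle=\langle D_zG\,v,w\rangle-\langle v,D_zG\,w\rangle=0 .
\]
So $\ker D_zG(p)$ is isotropic for the nondegenerate form $\langle J\cdot,\cdot\rangle$ on $\RR^{2n}$. Hence $\dim\ker\le n$, and therefore $r_p\ge n$.

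\emph{Sublevel estimate.} Fix $K\Subset U$ and choose a smooth bounded open $U'$ with $K\subset U'\Subset U$. Apply Lemma~\ref{gen:lem:rank-differentiation} on $U'$ with $d=2n$, the parameter dependence trivial ($G(s,z)=G(z)$, $\beta\equiv1$, $w\equiv1$), and $r=n$. The hypotheses of the lemma need checking:
\begin{itemize}
\item Since $f\in C^2$, $G$ is $C^1$ on a neighborhood of $\overline{U'}$.
\item $r=n\ge2$ requires $n\ge2$. For the case $n=1$ the estimate with $0\le\gamma<1$ is still obtained by the same local-straightening argument in the lemma's proof with $r=1$; that argument only uses the inverse function theorem and works for any $r\ge1$.
\item The rank bound holds at every zero in $\overline{U'}$ by the first step.
\end{itemize}
The lemma then gives \eqref{gen:rig:eq:weighted-sublevel} on $U'\supset K$, uniformly for small $\rho$. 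Taking $\gamma=0$ gives $|K\cap\{g<\rho\}|=O(\rho^n)$, and taking $\gamma=1$ (allowed when $n\ge2$) gives the $g^{-1}$ bound $O(\rho^{n-1})$.

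\emph{Nullity and density.} The zero set $\{g=0\}\cap K$ is contained in $\{g<\rho\}\cap K$ for every $\rho$, so its measure is at most $C\rho^n\to0$. Exhausting $U$ by compacts shows that the characteristic set is null in each graph chart. Away from characteristic points the hypersurface is trivially regular. Since graph charts cover a neighborhood of each characteristic point and a null set has empty interior, every open subset of $\Sigma$ meets $\Sigma^*$. Hence $\Sigma^*$ is dense.

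The only delicate point is the bookkeeping that $K$ is compactly inside the domain where the lemma applies, together with the finite-cover argument near $\partial U'$. This is already handled inside the lemma's proof, so no real obstacle remains.
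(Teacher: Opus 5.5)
Your proposal is correct and follows essentially the paper's argument: $\ker D_zG$ is isotropic by the curl identity, so $r_p\ge n$, and the rank-coordinate straightening of Lemma~\ref{gen:lem:rank-differentiation} at $s=0$ gives the sublevel bound; the paper uses the pointwise rank $r_p$ together with $\rho^{r_p-\gamma}\le\rho^{n-\gamma}$, whereas you take the uniform choice $r=n$. The other differences are cosmetic: the paper gets nullity from $\{G=0\}\subset\{a=0\}$ in each rank chart, while you let $\rho\downarrow0$ in the $\gamma=0$ bound. Both routes are valid, as is your remark that the straightening estimate needs only $r\ge1$.
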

\end{samepage}
\begin{proof}
  {Apply the rank-coordinate construction of
  Lemma~\ref{gen:lem:rank-differentiation} at each characteristic point,
  with $a\in\RR^{r_p}$ and $r_p\ge n$. It gives $g\ge|a|$.
  Finitely many such neighborhoods cover the zeros
  in $K$; $g$ is bounded below on the remaining
  compact set. Equation~\eqref{gen:eq:parameter-rank-caps} at $s=0$
  gives \eqref{gen:rig:eq:weighted-sublevel}, since
  $\rho^{r_p-\gamma}\le\rho^{n-\gamma}$ for $0<\rho<1$.
  Taking $\gamma=0$, and $\gamma=1$ when $n\ge2$, gives the two
  stated bounds. Since
  $\{G=0\}\subset\{a=0\}$ on each rank-coordinate neighborhood,
  the characteristic set is null and $\Sigma^*$ is dense.}
\end{proof}

\begin{remark}
  {The sublevel estimate \eqref{gen:rig:eq:weighted-sublevel}
  uses only $\operatorname{rank}D_zG(p)\ge n$, which holds at every
  characteristic point. It applies whether $D_zG(p)$ is invertible or
  singular; no assumption that the characteristic point is isolated
  is needed.}
\end{remark}

The automatic rank bound $\operatorname{rank}D_zG\ge n$ controls the
$g^{-1}$ term in the perimeter Hessian for $n\ge2$. The Jacobi-energy
estimate below involves $g^{-3}$; its rank-coordinate integral
requires $r_p>3$.

{For the next proposition, fix a compact vertical graph patch
with the characteristic point in its interior. Energy integrals are
over the regular part of each cap, and $\|W\|_{C^1}$ is computed on
this fixed surface patch. Constants may depend on the patch and its
$C^2$ graph, but not on $W$ or the cap radius.}

{\begin{proposition}[Local energy at a characteristic point]
  \label{gen:prop:characteristic-energy}
  Let $\Sigma$ be a $C^2$ hypersurface, written near a characteristic
  point $p=(z_p,f(z_p))$ as a vertical graph $t=f(z)$. Put
  $G=-\nabla_H(t-f)$, $g=|G|$, and
  $r_p=\operatorname{rank}D_zG(z_p)$. If $r_p\ge4$, then, for
  $0\le\gamma<r_p$ and all sufficiently small $\varepsilon>0$,
  \begin{equation}
    \label{gen:eq:characteristic-rank-cap}
    \int_{|z-z_p|<\varepsilon}g^{-\gamma}\,dz
    \le C_\gamma\varepsilon^{2n-\gamma}.
  \end{equation}
  Every $C^1$ surface field $W$ on this patch also satisfies
  \begin{equation}
    \label{gen:eq:characteristic-energy-cap}
    \int_{|z-z_p|<\varepsilon}
    \bigl(|\nabla_b\varphi_W|^2+(1+|q_\Sigma|)\varphi_W^2\bigr)
    \,d\sigma_H
    \le C\varepsilon^{2n-3}\|W\|_{C^1}^2.
  \end{equation}
  If $D_zG(z_p)$ is invertible, then $p$ is isolated and, with
  $r=|z-z_p|$,
  \begin{equation}
    \label{gen:eq:characteristic-energy-pointwise}
    |\varphi_W|\le Cr^{-1}\|W\|_{C^1},\qquad
    |\nabla_b\varphi_W|\le Cr^{-2}\|W\|_{C^1}.
  \end{equation}
\end{proposition}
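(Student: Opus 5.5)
Three estimates are needed, and all of them come from the rank-coordinate construction in Lemma~\ref{gen:lem:rank-differentiation} together with explicit graph-coordinate formulas for $\varphi_W$, $\nabla_b\varphi_W$ and $q_\Sigma$ in terms of $G$. The plan is: first \eqref{gen:eq:characteristic-rank-cap}, then the pointwise bounds on the energy density in powers of $g^{-1}$, and finally integration of those bounds.

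\textbf{Step 1: the cap estimate.} For \eqref{gen:eq:characteristic-rank-cap}, take rank coordinates $(a,b)=\chi(z)$ at $z_p$ with $a\in\RR^{r_p}$, $b\in\RR^{2n-r_p}$. Since $\chi$ is a $C^1$ diffeomorphism near $z_p$, the ball $\{|z-z_p|<\varepsilon\}$ lies in a box $\{|a|<C\varepsilon,\ |b|<C\varepsilon\}$, and $g\ge|a|$. Changing variables gives
\[
\int_{|z-z_p|<\varepsilon} g^{-\gamma}\,dz\le C\int_{|b|<C\varepsilon}\int_{|a|<C\varepsilon}|a|^{-\gamma}\,da\,db\le C\varepsilon^{2n-r_p}\varepsilon^{r_p-\gamma}.
\]
This is the Lemma's computation, except that the $b$-box now shrinks with $\varepsilon$.

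\textbf{Step 2: pointwise bounds in graph coordinates.} On the regular part, $d\sigma_H=g\,dz$ and $|\alpha|=g^{-1}$. Writing $N=\sigma(T-G)/\sqrt{1+g^2}$, we get $\langle W,N\rangle/|N_b|=\sigma\langle W,T-G\rangle/g$, hence $|\varphi_W|\le C\|W\|_{C^0}\,g^{-1}$. The frame $e_{2n}=\pm G/g$ and the horizontal tangent frame have first derivatives of size $|D_zG|/g\lesssim g^{-1}$, because $f\in C^2$ makes $D_zG$ bounded. Differentiating the quotient therefore gives $|\nabla_b\varphi_W|\le C\|W\|_{C^1}\,g^{-2}$. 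Similarly, $\Shape$ and $e_n\alpha$ involve one derivative of $G/g$ or of $1/g$, so
\[
|\Shape|\lesssim g^{-1},\qquad |e_n\alpha|\lesssim g^{-2},\qquad \alpha^2=g^{-2}.
\]
By \eqref{eq:potential-symmetric-shape}, $|q_\Sigma|\lesssim g^{-2}$.

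\textbf{Step 3: the energy estimate.} Combining Step 2, the energy density against $dz$ is bounded by
\[
\bigl(g^{-4}+g^{-2}\cdot g^{-2}+g^{-2}\bigr)\,g\,\|W\|_{C^1}^2\lesssim g^{-3}\|W\|_{C^1}^2.
\]
Applying \eqref{gen:eq:characteristic-rank-cap} with $\gamma=3<r_p$, which is where $r_p\ge4$ is used, yields \eqref{gen:eq:characteristic-energy-cap}.

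\textbf{Step 4: the invertible case.} If $D_zG(z_p)$ is invertible, then $G(z)=D_zG(z_p)(z-z_p)+o(|z-z_p|)$, so $g\ge c\,r$ near $z_p$. This gives isolation of $p$, and \eqref{gen:eq:characteristic-energy-pointwise} follows from the $g^{-1}$ and $g^{-2}$ bounds of Step 2.

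\textbf{Main obstacle.} The hard part is justifying the $g^{-2}$ bound for $\nabla_b\varphi_W$ and for $e_n\alpha$ when $f$ is only $C^2$. Terms like $\langle\nabla G\,\cdot,\cdot\rangle/g^2$ are fine, since they involve only second derivatives of $f$. One must check that no third derivatives of $f$ appear, e.g.\ by keeping $\Shape$ and $e_n\alpha$ expressed through $D_zG$ rather than through curvature of $\Sigma$. One must also check that the $\langle W,T\rangle/g$ term contributes only $\|W\|_{C^1}\,g^{-2}$, using that $e_a$ hits $g^{-1}$ with $|e_a g|\le|D_zG|$.
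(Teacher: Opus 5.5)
Your proposal is correct and follows the paper's proof essentially step for step. It has the same structure:
\begin{itemize}
\item the cap integral via rank coordinates, with the $b$-box shrinking like $\varepsilon$;
\item the pointwise bounds $|\varphi_W|\lesssim g^{-1}$, $|\nabla_b\varphi_W|\lesssim g^{-2}$ and $|q_\Sigma|\lesssim g^{-2}$, obtained from $\varphi_W=\sigma\langle W,T-G\rangle/g$ and $e_{2n}=\pm G/g$, so that only $D_zG$ (hence only second derivatives of $f$) enters;
\item the choice $\gamma=3<r_p$ in the cap estimate;
\item the lower bound $g\gtrsim|z-z_p|$ in the invertible case.
\end{itemize}

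The differences are cosmetic. You bound $q_\Sigma$ through $\Shape$ and \eqref{eq:potential-symmetric-shape} rather than through $\II$ and the definition of $q_\Sigma$. You use a first-order Taylor expansion instead of the inverse function theorem. The derivatives of the horizontal tangent frame that you mention are never needed, since $|e_a\psi|\le|D_z\psi|$ for horizontal unit tangents and $|\II|$ is frame-independent.
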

\begin{proof}
  In the graph coordinates write
  $A_W=W^t-D_zf\cdot W^z$ and
  $\sigma=\operatorname{sgn}\langle N,T\rangle$. Then
  $\varphi_W=\sigma A_W/g$, $d\sigma_H=g\,dz$, and
  $|A_W|+|D_zA_W|\le C\|W\|_{C^1}$.
  Since $G$ is $C^1$, its derivative is bounded, and thus
  $|D_zg|\le C$ where $g>0$. Horizontal tangential unit vectors
  have Euclidean $z$-components of norm at most one, so
  \[
    |\varphi_W|\le Cg^{-1}\|W\|_{C^1},\qquad
    |\nabla_b\varphi_W|\le Cg^{-2}\|W\|_{C^1}
  \]
  after shrinking the patch so that $g\le1$.
  Moreover $e_{2n}$ is, up to a locally constant sign, $G/g$ in the
  fixed horizontal frame, while $|\alpha|=g^{-1}$. The definitions of
  $\II$ and $q_\Sigma$ therefore give
  $|\II|\le Cg^{-1}$, $|e_n\alpha|\le Cg^{-2}$, and
  $|q_\Sigma|\le Cg^{-2}$. Hence the integral in
  \eqref{gen:eq:characteristic-energy-cap} is at most
  \[
    C\|W\|_{C^1}^2\int_{|z-z_p|<\varepsilon}g^{-3}\,dz.
  \]
  Choose a nonzero $r_p\times r_p$ minor of $D_zG(z_p)$.
  As in the rank coordinates of Lemma~\ref{gen:lem:rank-differentiation},
  a local $C^1$ diffeomorphism $z\mapsto(a,b)$ has
  $a\in\RR^{r_p}$ consisting of selected components of $G$ and
  $|G|\ge|a|$. Its Jacobian and inverse Jacobian are bounded.
  The image of $|z-z_p|<\varepsilon$ lies in a box
  $|a|<C\varepsilon$, $|b-b_p|<C\varepsilon$. Thus, for
  $0\le\gamma<r_p$,
  \[
    \int_{|z-z_p|<\varepsilon}g^{-\gamma}\,dz
    \le C\int_{|b-b_p|<C\varepsilon}
          \int_{|a|<C\varepsilon}|a|^{-\gamma}\,da\,db
    \le C_\gamma\varepsilon^{2n-r_p}\varepsilon^{r_p-\gamma}
    =C_\gamma\varepsilon^{2n-\gamma},
  \]
  proving \eqref{gen:eq:characteristic-rank-cap}. Taking $\gamma=3$
  proves \eqref{gen:eq:characteristic-energy-cap}. If $D_zG(z_p)$
  is invertible, the inverse function
  theorem gives $g(z)\asymp|z-z_p|$ near $p$, and the preceding
  pointwise bounds give \eqref{gen:eq:characteristic-energy-pointwise}.
\end{proof}}

\subsection{variations and global stationarity}

Choose charts compatible with the outward boundary orientation
and a consistent sign for their horizontal cofactors.
For a $C^2$ chart $\iota\colon U\to\Sigma$, write
$G_\iota\in\RR^{2n}$ in the fixed frame, so $d\sigma_H=|G_\iota|\,dz$.
For a chart transition $z=\chi(y)$, the cofactor transforms as
$G_{\iota\circ\chi}=(\det D\chi)(G_\iota\circ\chi)$.
At a zero its derivative is therefore
\[
  D_yG_{\iota\circ\chi}
  =(\det D\chi)(D_zG_\iota\circ\chi)D\chi.
\]
The term differentiating the determinant vanishes at the zero.
Since $D\chi$ is invertible, the rank is independent of the chart.

\begin{lemma}[Uniform sublevel estimate]
  \label{gen:rig:lem:parameter-sublevel}
  Let \(n\ge2\), let \(\Sigma\subset\Hn\) be an oriented \(C^2\)
  hypersurface, and let \(\iota\colon U\to\Sigma\) be an oriented
  chart. Let \(f_s\) be a finite-dimensional family with
  \(f_0=\iota\), and let \(G_s\) be its horizontal cofactor.
  We allow the following families:
  \begin{enumerate}
    \item \(f_s=F_s\circ\iota\), where \(F_0=\Id\) and
      \(F(s,p)\) is jointly \(C^3\) in \((s,p)\);
    \item \(f_\tau=\Phi_\tau^X\circ\iota\), where \(X\) is a
      \(C^2\) ambient vector field;
    \item on the chart, \(f_s=\iota+\sum_i s_iW_i\) in the fixed
      coordinates \((z,t)\), where
      \(W_i\in C^{1,1}(U;\RR^{2n+1})\).
  \end{enumerate}
  With \(A_k=\partial_{x_k}f_s\) and
  \(Z_1,\ldots,Z_{2n}=X_1,Y_1,\ldots,X_n,Y_n\), set
  \[
    G_{s,a}=-\det[Z_a(f_s),A_1,\ldots,A_{2n}].
  \]
  For every \(K\Subset U\), there are \(\delta,\rho_0>0\)
  such that, for each \(0\le\gamma<n\), some \(C_\gamma\) satisfies
  \begin{equation}
    \label{gen:rig:eq:uniform-parameter-sublevel}
    \sup_{|s|<\delta}
    \int_{K\cap\{|G_s|<\rho\}}|G_s|^{-\gamma}\,dx
    \le C_\gamma\rho^{n-\gamma},
    \qquad 0<\rho<\rho_0.
  \end{equation}
  The zero set of \(G_s\) is null for every \(|s|<\delta\), and
  \[
    \sup_{|s|<\delta}
    \int_{K\cap\{|G_s|<\rho\}}
      (1+|G_s|^{-1})\,dx
    \le C(\rho^n+\rho^{n-1}).
  \]
\end{lemma}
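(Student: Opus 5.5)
The plan is to reduce to the parametric rank-coordinate argument of Lemma~\ref{gen:lem:rank-differentiation}, with $d=2n$ and $r=n$. That lemma requires three inputs:
\begin{itemize}
  \item $G(s,x)=G_s(x)$ and its $s$-derivatives up to order two must be jointly continuous;
  \item $D_xG$ must be jointly continuous in $(s,x)$ near $\{0\}\times\overline{K'}$, where $K\Subset K'\Subset U$;
  \item $\operatorname{rank}D_xG_0\ge n$ must hold at every zero of $G_0$ in $\overline{K'}$.
\end{itemize}
Given these, \eqref{gen:eq:parameter-rank-caps} yields \eqref{gen:rig:eq:uniform-parameter-sublevel} uniformly for $|s|<\delta$. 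The statements $\gamma=0$ and $\gamma=1$ (allowed since $n\ge2$) then give the null zero sets and the bound $C(\rho^n+\rho^{n-1})$.

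The rank hypothesis at $s=0$ comes from Corollary~\ref{gen:rig:lem:sublevel} together with the chart-invariance computation just before the lemma. Near a characteristic point, $\iota$ differs from a vertical graph chart by a $C^1$ transition $\chi$, and $G_\iota=(\det D\chi)(G_{\mathrm{graph}}\circ\chi)$. At a zero this gives
\[
  \operatorname{rank}D_xG_\iota=\operatorname{rank}D_zG_{\mathrm{graph}}\ge n.
\]
One point needs care: the vertical graph cofactor $G=-\nabla_H(t-f)$ agrees with the determinant cofactor $G_{s,a}$ of the statement only up to a nonvanishing factor. Expanding $\det[Z_a,A_1,\ldots,A_{2n}]$ for $A_k=\partial_{x_k}(x,f(x))$ gives exactly $\pm(Z_aF)$ up to sign. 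A nonvanishing factor changes neither the zero set nor the rank there.

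The main work is verifying the regularity of $G_s$ in each family. The determinant is a polynomial in three quantities:
\begin{itemize}
  \item the coefficients of $Z_a$ at $f_s$, which are affine in the coordinates of $f_s$;
  \item the first derivatives $A_k=\partial_{x_k}f_s$;
  \item $\partial_{x}\partial_{s}$-type mixed terms, for the $s$-derivatives.
\end{itemize}
In case (i), $F$ is jointly $C^3$ and $\iota$ is $C^2$. Hence $f_s$ is $C^2$ in $x$, with $\partial_s^j\partial_x f_s$ continuous for $j\le2$. So $D_s^jG$ ($j\le2$) and $D_xG$ are jointly continuous; $D_xG$ uses $\partial_x^2f_s$, which is continuous.

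In case (ii), the flow of a $C^2$ field is $C^2$ in $p$. Its $\tau$-derivatives are $X\circ\Phi$ and $(DX\cdot X)\circ\Phi$, which are continuous, and $\partial_\tau\partial_p\Phi=DX\,D\Phi$ is continuous. For $\partial_\tau^2\partial_p$ one uses the variational equation: $\partial_\tau^2 D\Phi=D(DX\,X)\circ\Phi\,D\Phi$ involves $D^2X$, which is continuous. So $C^2$ suffices.

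Case (iii) is the likely obstacle, because $W_i\in C^{1,1}$ makes $D_xG_s$ only $L^\infty$, not continuous, for $s\ne0$. Here I would exploit that $G_s$ is a polynomial in $s$ with coefficients involving $\iota,W_i$ and their first derivatives. Write $G_s=G_0+E_s$, where $|E_s|+|D_xE_s|\le C|s|$ almost everywhere, with $E_s$ Lipschitz. Rather than invoke continuity, I would rerun the rank-coordinate step of Lemma~\ref{gen:lem:rank-differentiation} with the map
\[
  \chi_s(x)=(G_{s,1},\ldots,G_{s,n},x_{n+1},\ldots).
\]
This map is Lipschitz, and $\|D\chi_s-D\chi_0\|_{L^\infty}\le C|s|$. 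For small $|s|$ such a perturbation of a $C^1$ local diffeomorphism is still bi-Lipschitz onto its image, with uniformly bounded inverse Jacobian. This follows from the inverse function theorem for Lipschitz perturbations, using $|\chi_s(x)-\chi_s(y)|\ge(c-C|s|)|x-y|$ on a small convex ball. The change of variables is valid for bi-Lipschitz maps, so the same cap integral bound holds uniformly in $s$.

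To finish, cover the zeros of $G_0$ in $K$ by finitely many such neighborhoods. On the compact complement, $|G_0|\ge c>0$, so $|G_s|\ge c/2$ for $|s|<\delta$, and the sublevel set $\{|G_s|<\rho\}$ is contained in the union of the neighborhoods once $\rho<\rho_0=c/2$.
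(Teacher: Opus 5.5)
Your proposal is correct and follows the paper's proof essentially step for step. Cases (i) and (ii) reduce to Lemma~\ref{gen:lem:rank-differentiation} with $r=n$ through joint continuity of $G_s$, $D_s^jG_s$ and $D_xG_s$, using the variational equation for $D\Phi_\tau^X$ in the flow case, with the rank bound supplied by Corollary~\ref{gen:rig:lem:sublevel} and chart invariance. The $C^{1,1}$ affine case is handled separately, exactly as you do: the rank coordinates $\chi_s=(G_{s,1},\ldots,G_{s,n},x_{n+1},\ldots,x_{2n})$ are uniformly bi-Lipschitz perturbations of $\chi_0$, which permits a bi-Lipschitz change of variables; a finite cover and the persistence of the lower bound of $|G_0|$ away from its zeros then finish the proof.

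The only difference is cosmetic. You obtain the lower Lipschitz bound on a small convex ball in $x$, whereas the paper measures the perturbation against $|\chi_0(x)-\chi_0(x')|$ on a neighborhood whose image $\chi_0(B)$ is a convex box.
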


\begin{proof}
  Choose \(K_1\) with \(K\Subset K_1\Subset U\). At every zero of
  \(G_0\) in \(K_1\), Corollary~\ref{gen:rig:lem:sublevel}
  and the chart-invariance calculation above give
  \(\operatorname{rank}D_xG_0\ge n\).

  For a jointly \(C^3\) ambient family, the cofactor formula shows
  that \(G_s\), its first two parameter derivatives, and \(D_xG_s\)
  are jointly continuous. Lemma~\ref{gen:lem:rank-differentiation},
  applied with \(r=n\) on a neighborhood of \(K\), gives
  \eqref{gen:rig:eq:uniform-parameter-sublevel}.
  The same argument applies to a \(C^2\) flow. With
  \(M_\tau=D\Phi_\tau^X\), its spatial derivative satisfies
  \[
    \begin{aligned}
      \partial_\tau M_\tau
        &=DX(\Phi_\tau^X)M_\tau,\\
      \partial_\tau^2M_\tau
        &=D^2X(\Phi_\tau^X)[X(\Phi_\tau^X),M_\tau]
          +DX(\Phi_\tau^X)^2M_\tau.
    \end{aligned}
  \]
  These expressions are continuous when \(X\in C^2\).

  For an affine family, \(G_s\) is polynomial in \(s\): each
  \(\partial_{x_k}f_s\) is affine in \(s\), as are the
  {coordinate components} of \(Z_a(f_s)\). On a compact
  subchart, the \(C^{1,1}\) regularity of the \(W_i\) consequently gives
  \begin{equation}
    \label{gen:rig:eq:affine-lipschitz-perturbation}
    |G_s(x)-G_0(x)|\le C|s|,
    \qquad
    |(G_s-G_0)(x)-(G_s-G_0)(x')|
      \le C|s|\,|x-x'|.
  \end{equation}
  We use a separate coordinate argument here because \(D_xG_s\)
  need not be continuous for \(C^{1,1}\) fields.

  Fix a zero \(x_0\) of \(G_0\) in \(K\). Relabel coordinates and
  cofactor components so that
  \[
    \det(\partial_{x_j}G_{0,a}(x_0))_{a,j=1}^n\ne0,
    \qquad
    \chi_s(x)
      =(G_{s,1}(x),\ldots,G_{s,n}(x),
        x_{n+1},\ldots,x_{2n}).
  \]
  The inverse function theorem makes \(\chi_0\) a \(C^1\)
  coordinate map on a neighborhood \(B\) of \(x_0\). Shrink \(B\)
  so that \(\chi_0(B)\) is a convex box with compact closure in the
  inverse's domain. Its inverse has a bounded derivative, so
  \[
    |x-x'|\le C|\chi_0(x)-\chi_0(x')|,
    \qquad x,x'\in B.
  \]
  Only the first \(n\) entries of \(\chi_s\) differ from those of
  \(\chi_0\). By \eqref{gen:rig:eq:affine-lipschitz-perturbation},
  \[
    |(\chi_s-\chi_0)(x)-(\chi_s-\chi_0)(x')|
      \le C|s|\,|\chi_0(x)-\chi_0(x')|.
  \]
  For \(C|s|\le\frac12\), the triangle inequality yields
  \[
    \frac12|\chi_0(x)-\chi_0(x')|
      \le|\chi_s(x)-\chi_s(x')|
      \le\frac32|\chi_0(x)-\chi_0(x')|.
  \]
  Thus \(\chi_s\) is uniformly bi-Lipschitz on \(B\).

  Write \(\chi_s(x)=(a,b)\), with \(a,b\in\RR^n\).
  Since \(a\) consists of components of \(G_s\),
  \(|G_s(x)|\ge|a|\). Let \(B_b\subset\RR^n\) be a fixed bounded box
  containing the \(b\) coordinates of \(B\). The inverse Jacobian
  of \(\chi_s\) is uniformly bounded
  almost everywhere. The change-of-variables formula for
  bi-Lipschitz maps therefore gives
  \[
    \begin{aligned}
      \int_{B\cap\{|G_s|<\rho\}}|G_s|^{-\gamma}\,dx
      &\le C\int_{B_b}\int_{|a|<\rho}
                   |a|^{-\gamma}\,da\,db\\
      &=\frac{C|B_b|\,|S^{n-1}|}{n-\gamma}\rho^{n-\gamma}.
    \end{aligned}
  \]
  The same coordinates show that
  \(\{G_s=0\}\cap B\subset\{a=0\}\) has measure zero.
  Finitely many such neighborhoods cover the zeros of \(G_0\)
  in \(K\). On the remaining compact set, \(|G_0|\) has a positive
  lower bound, which persists for small \(s\) by
  \eqref{gen:rig:eq:affine-lipschitz-perturbation}. This proves
  \eqref{gen:rig:eq:uniform-parameter-sublevel} and nullity for the
  affine family. Taking \(\gamma=0\) and \(\gamma=1\), which is
  permitted because \(n\ge2\), gives the final bound.
\end{proof}

\begin{proposition}[Differentiation of horizontal perimeter]
  \label{gen:rig:prop:second-variation}
  Let \(n\ge2\), and let \(\Omega\subset\Hn\) be a bounded open set
  with outward-oriented \(C^2\) boundary \(\Sigma\). For the
  variation families specified in
  Lemma~\ref{gen:rig:lem:parameter-sublevel}, horizontal perimeter
  and enclosed volume are \(C^2\) in the parameters near the identity.
  For every \(h\in\RR\), the affine second variation
  \(\mathcal B_h\) extends uniquely to a continuous bilinear form
  on \(C^1\) surface fields. {For all $C^1$ surface fields
  $U,W$, one has}
  \begin{equation}
    \label{gen:rig:eq:C1-hessian-bound}
    {|\mathcal B_h(U,W)|
      \le C_{\Sigma,h}\|U\|_{C^1}\|W\|_{C^1}.}
  \end{equation}
\end{proposition}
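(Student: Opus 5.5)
We localize with a finite partition of unity subordinate to oriented $C^2$ charts $\iota\colon U\to\Sigma$. On each chart, horizontal perimeter of the deformed surface is $\int_U\psi\,|G_s|\,dx$, where $G_s$ is the cofactor of Lemma~\ref{gen:rig:lem:parameter-sublevel} and $\psi$ is the fixed partition weight transported to the chart. For interior charts of the family, the reparametrization is fixed, so no Jacobian factor is needed. Enclosed volume is handled separately: $V(\Omega_s)=\int_\Omega\det D\Phi_s\,dV$ for ambient families. For the affine family, we use $\int_\Sigma f_s^*(\iota_{t\partial_t}dV)$-type boundary integrals; these are polynomial in $s$ with $C^{0,1}$ coefficients, hence smooth in $s$, so volume poses no difficulty.

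For perimeter, each family gives $G_s$ with $D_s^jG_s$ ($j\le2$) jointly continuous. For the affine family, $G_s$ is a polynomial in $s$ whose coefficients are bounded and Lipschitz. We then run the mollified argument in the proof of Lemma~\ref{gen:lem:rank-differentiation}, with $\beta\equiv1$ and $w=\psi$. The one place where that proof used continuity of $D_zG$ was the uniform sublevel bound, and that bound is now supplied for all three families by \eqref{gen:rig:eq:uniform-parameter-sublevel} with $r=n\ge2$. The densities $D_s^j|G_s|$ are bounded by $C(1+|G_s|^{-1})$ by \eqref{gen:eq:norm-first-second-derivatives}. The $\varepsilon$-regularization $N_\varepsilon(G_s)$ converges with the error $C_\rho\varepsilon^2+C(\rho^n+\rho^{n-1})$, uniformly in $s$. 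This gives $C^2$ dependence, with derivatives equal to the integrals of the material density derivatives.

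For the bilinear form, we take $U,W\in C^{1,1}$ and the affine family $f_{r,s}=\iota+rU+sW$. By the second-derivative formula,
\[
\mathcal B_h(U,W)=\int_U\psi\Bigl(\frac{\langle\Pi_{G}\partial_rG,\Pi_G\partial_sG\rangle}{|G|}+\langle\widehat G,\partial_r\partial_sG\rangle\Bigr)dx-h\,\partial_r\partial_sV,
\]
evaluated at $s=r=0$. Here $\partial_rG$ is linear in $(U,DU)$, $\partial_sG$ is linear in $(W,DW)$, and $\partial_r\partial_sG$ is bilinear in the $1$-jets. This holds because $G$ is a determinant of columns affine in the parameters, and $Z_a(f)$ is affine in $f$. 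The expression therefore involves only first derivatives of $U$ and $W$. Its absolute value is bounded by
\[
C\|U\|_{C^1}\|W\|_{C^1}\int_K(1+|G_0|^{-1})\,dx,
\]
and this integral is finite by Corollary~\ref{gen:rig:lem:sublevel} since $n\ge2$. The volume term has the same bound. This proves \eqref{gen:rig:eq:C1-hessian-bound} on the dense subspace $C^{1,1}$.

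Since $C^{1,1}$ fields (for instance mollifications) are dense in $C^1$ fields in the $C^1$ norm, the formula itself defines the continuous extension, and that extension is unique. We must also check that the $C^{1,1}$ definition is independent of the extension $\widetilde U,\widetilde W$. This is clear: the boundary family $\Phi_{r,s}|_\Sigma$ depends only on $U,W$, and the perimeter of $\partial\Omega_{r,s}$ depends only on the image surface. The main obstacle is the affine family, where $D_xG_s$ is merely $L^\infty$. That gap is exactly what the bi-Lipschitz rank coordinates of Lemma~\ref{gen:rig:lem:parameter-sublevel} handle. The remaining care is to confirm that the regularization limit in Lemma~\ref{gen:lem:rank-differentiation} uses only the uniform sublevel bound and the continuity of $D_s^jG$, and never continuity of $D_xG$.
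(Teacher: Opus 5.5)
Your proposal is correct and takes essentially the same route as the paper. Like the paper, you use fixed-atlas cofactor densities and the $N_\varepsilon$ regularization, with the uniform sublevel bound of Lemma~\ref{gen:rig:lem:parameter-sublevel} supplying the only input that required $D_xG$. You also integrate the pointwise bound $C(1+|G_0|^{-1})M_UM_W$ via the $\gamma=1$ estimate and conclude by density of regular fields in $C^1$.

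There are two small differences. For ambient families you compute volume as $\int_\Omega\det D\Phi_s\,dV$, whereas the paper uses the boundary form $f_s^*(t\,dz)$ for every family. You also cite Corollary~\ref{gen:rig:lem:sublevel}, which is stated for vertical graph charts; the $s=0$ case of Lemma~\ref{gen:rig:lem:parameter-sublevel} covers arbitrary charts directly.
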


The $C^1$ conclusion is a continuous bilinear extension of the affine
second variation. Twice differentiability along a deformation is
asserted for the families in Lemma~\ref{gen:rig:lem:parameter-sublevel}.

\begin{proof}
  \label{gen:app:parameter-proof}
  The boundary \(\Sigma\) is compact. Choose a finite oriented
  \(C^2\) atlas \(\iota_\ell\colon U_\ell\to\Sigma\) and a
  subordinate partition of unity \(\eta_\ell\), with each
  \(\operatorname{supp}\eta_\ell\Subset U_\ell\). On one chart write
  \(f_s=F_s\circ\iota=(z_s,t_s)\) and
  \(A_k=\partial_{x_k}f_s\). In the affine case \(f_s\) is the
  surface map itself. Define the fixed-chart densities by
  \[
    G_{s,a}=-\det[Z_a(f_s),A_1,\ldots,A_{2n}],
    \qquad v_s=t_s\det D_xz_s.
  \]
  Then \(|G_s|\,dx\) is the perimeter density and
  \(v_s\,dx=f_s^*(t\,dz)\). Since \(d(t\,dz)=dV\), Stokes'
  theorem and the fixed partition give
  \begin{equation}
    \label{gen:eq:general-fixed-atlas-perimeter}
    \PH(F_s\Sigma)
      =\sum_\ell\int_{U_\ell}\eta_\ell|G_{\ell,s}|\,dx,
    \qquad
    V(\Omega_s)
      =\sum_\ell\int_{U_\ell}\eta_\ell v_{\ell,s}\,dx.
  \end{equation}

  Work on one chart and write \(G=G_s\), \(g=|G|\),
  \(G_i=\partial_{s_i}G\), and
  \(G_{ij}=\partial_{s_i}\partial_{s_j}G\).
  On \(\{g>0\}\), set
  \(\widehat G=G/g\) and
  \(\Pi_G=I-\widehat G\otimes\widehat G\).
  Direct differentiation of \(g^2=G\cdot G\) gives
  \(\partial_{s_i}g=(G\cdot G_i)/g\). Differentiating again,
  \begin{equation}
    \label{gen:rig:eq:-second-derivative}
    \begin{aligned}
      \partial_{s_i}\partial_{s_j}g
      &=\frac{G_i\cdot G_j+G\cdot G_{ij}}{g}
        -\frac{(G\cdot G_i)(G\cdot G_j)}{g^3}\\
      &=\frac{\langle\Pi_GG_i,\Pi_GG_j\rangle}{g}
        +\langle\widehat G,G_{ij}\rangle.
    \end{aligned}
  \end{equation}
  This displays the only singular factor, \(g^{-1}\).
  The regularity verified in the proof of
  Lemma~\ref{gen:rig:lem:parameter-sublevel} makes \(G_i,G_{ij}\)
  continuous and bounded on each fixed compact chart.

  For an affine family \(f_s=\iota+\sum_i s_iW_i\),
  {each $Z_a$ is affine in the fixed coordinates}, so \(D^2Z_a=0\).
  Differentiating the determinant in each column gives
  \[
    \begin{aligned}
      G_{i,a}
      &=-\det[DZ_a(f_s)W_i,A_1,\ldots,A_{2n}]\\
      &\quad-\sum_{k=1}^{2n}
        \det[Z_a(f_s),A_1,\ldots,
             \partial_{x_k}W_i,\ldots,A_{2n}],\\
      G_{ij,a}
      &=-\sum_{k=1}^{2n}
        \det[DZ_a(f_s)W_i,A_1,\ldots,
             \partial_{x_k}W_j,\ldots,A_{2n}]\\
      &\quad-\sum_{k=1}^{2n}
        \det[DZ_a(f_s)W_j,A_1,\ldots,
             \partial_{x_k}W_i,\ldots,A_{2n}]\\
      &\quad-\sum_{1\le k<l\le2n}
        \det[Z_a(f_s),A_1,\ldots,
             \partial_{x_k}W_i,\ldots,
             \partial_{x_l}W_j,\ldots,A_{2n}]\\
      &\quad-\sum_{1\le k<l\le2n}
        \det[Z_a(f_s),A_1,\ldots,
             \partial_{x_k}W_j,\ldots,
             \partial_{x_l}W_i,\ldots,A_{2n}].
    \end{aligned}
  \]
  Each replacement occupies its original column. On a fixed compact
  chart, all unreplaced columns are bounded. With
  \(M_i=|W_i|+|D_xW_i|\), the determinant bound gives
  \begin{equation}
    \label{gen:eq:general-affine-cofactor-bounds}
    |G_i|\le CM_i,\qquad |G_{ij}|\le CM_iM_j.
  \end{equation}
  Inserting these
  bounds into \eqref{gen:rig:eq:-second-derivative} gives
  \begin{equation}
    \label{gen:eq:general-affine-density}
    |\partial_{s_i}\partial_{s_j}g|
      \le C(1+g^{-1})M_iM_j
      \quad\text{almost everywhere on }\{g>0\}.
  \end{equation}

  We now justify passing the derivatives through
  \eqref{gen:eq:general-fixed-atlas-perimeter}. Put
  \(g_\varepsilon=(|G|^2+\varepsilon^2)^{1/2}\).
  Its derivatives are continuous, and direct calculation gives
  \[
    \begin{aligned}
      \partial_{s_i}g_\varepsilon
        &=\frac{G\cdot G_i}{g_\varepsilon},\\
      \partial_{s_i}\partial_{s_j}g_\varepsilon
        &=\frac{G_i\cdot G_j+G\cdot G_{ij}}
                {g_\varepsilon}
          -\frac{(G\cdot G_i)(G\cdot G_j)}
                 {g_\varepsilon^3}.
    \end{aligned}
  \]
  Since \(|G|\le g_\varepsilon\), these formulas imply
  \[
    |\partial_{s_i}g_\varepsilon|\le|G_i|,
    \qquad
    |\partial_{s_i}\partial_{s_j}g_\varepsilon|
      \le|G_{ij}|+\frac{2|G_i||G_j|}{g_\varepsilon}.
  \]
  Off the null zero set, the displayed densities are bounded by
  \(C(1+g^{-1})\), uniformly in \(0<\varepsilon\le1\).
  Lemma~\ref{gen:rig:lem:parameter-sublevel} gives
  \[
    \sup_{|s|<\delta}
    \int_{\operatorname{supp}\eta_\ell\cap\{g<\rho\}}
      (1+g^{-1})\,dx
    \le C(\rho^n+\rho^{n-1}).
  \]
  On \(\{g\ge\rho\}\), the differences
  \(D_s^j(g_\varepsilon-g)\) are \(O_\rho(\varepsilon^2)\) for
  \(j=0,1,2\). Splitting the chart integral at \(g=\rho\)
  therefore yields
  \[
    \sup_{|s|<\delta}\left|
      D_s^j\int_{U_\ell}\eta_\ell g_\varepsilon\,dx
       -\int_{U_\ell}\eta_\ell D_s^j g\,dx
    \right|
    \le C_\rho\varepsilon^2+C(\rho^n+\rho^{n-1}),
    \qquad j=0,1,2.
  \]
  {First let \(\varepsilon\downarrow0\), then
  \(\rho\downarrow0\) in this estimate. The regularized chart integrals
  and their first two parameter derivatives converge locally uniformly.
  Since the regularized integrals are \(C^2\), their limit is \(C^2\),
  with derivatives given by those limits. Therefore}
  \[
    D_s^j\PH(F_s\Sigma)
      =\sum_\ell\int_{U_\ell}\eta_\ell
        D_s^j|G_{\ell,s}|\,dx,
    \qquad j=1,2.
  \]

  Volume is simpler because \(v_s=t_s\det D_xz_s\) has no singular
  factor. Its parameter derivatives through order two are
  continuous, so they pass directly through the fixed-chart
  integrals. For the affine family, the product rule and
  multilinearity of the determinant give, at \(s=0\),
  \[
    \begin{aligned}
      \partial_{s_i}\partial_{s_j}v_s\big|_0
      &=W_i^t\sum_{k=1}^{2n}
        \det[\partial_{x_1}z_0,\ldots,
             \partial_{x_k}W_j^z,\ldots,\partial_{x_{2n}}z_0]\\
      &\quad+W_j^t\sum_{k=1}^{2n}
        \det[\partial_{x_1}z_0,\ldots,
             \partial_{x_k}W_i^z,\ldots,\partial_{x_{2n}}z_0]\\
      &\quad+t_0\sum_{k\ne l}
        \det[\partial_{x_1}z_0,\ldots,
             \partial_{x_k}W_i^z,\ldots,
             \partial_{x_l}W_j^z,\ldots,\partial_{x_{2n}}z_0].
    \end{aligned}
  \]
  Again, replacements occupy their original columns. On the fixed
  compact chart, all unreplaced columns and \(t_0\) are bounded.
  Hence
  \[
    \left|\left.\partial_{s_i}\partial_{s_j}v_s\right|_{s=0}\right|
      \le CM_iM_j.
  \]

  Finally, sum \eqref{gen:eq:general-affine-density} and the volume
  estimate over the finite atlas. Lemma~
  \ref{gen:rig:lem:parameter-sublevel} with \(\gamma=1\) gives
  \[
    {\begin{aligned}
      |\mathcal B_h(U,W)|
      &\le C_h\sum_\ell
        \int_{\operatorname{supp}\eta_\ell}
        (1+|G_{\ell,0}|^{-1})
        (|U|+|D_xU|)(|W|+|D_xW|)\,dx\\
      &\le C_{\Sigma,h}\|U\|_{C^1}\|W\|_{C^1}.
    \end{aligned}}
  \]
  {The mixed density is bilinear in \(U,W\) and their first chart
  derivatives. Since restrictions of smooth ambient fields are dense
  in \(C^1\), the bound gives the unique continuous bilinear extension.}
\end{proof}

\Needspace{12\baselineskip}

\begin{samepage}
  Under the assumptions of Proposition~\ref{gen:rig:prop:second-variation},
  fix $h\in\RR$, put $\mathcal F_h=\PH-hV$, and
  let $\mathcal B_h$ be its extended affine second variation in the
  fixed coordinates $(z,t)$.
  \begin{proposition}
    \label{gen:rig:prop:global-stationarity}
    For every $C^1$ surface field $W$,
    \begin{equation}
      \label{gen:eq:general-first-variation}
      \delta\mathcal F_h[W]
      =\int_\Sigma(H_\Sigma-h)\varphi_W\,d\sigma_H,
    \end{equation}
    with an absolutely convergent integral. For the dilation field
    $D=z\cdot\partial_z+2t\partial_t$, one has
    \begin{align}
      \delta V[D]&=(2n+2)|\Omega|,\notag\\*
      \delta\mathcal F_h[D]&=(2n+1)\PH-h\,\delta V[D].
      \label{gen:eq:general-dilation-first}
    \end{align}
    If $H_\Sigma=h$ on $\Sigma^*$, then $\delta\mathcal F_h=0$, and the
     second variation defines the symmetric,
    acceleration-independent Hessian $\mathcal B_h$.
  \end{proposition}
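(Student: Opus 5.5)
We prove the first-variation formula \eqref{gen:eq:general-first-variation} first for smooth ambient fields and then extend it to $C^1$ fields by density. By Proposition~\ref{gen:rig:prop:second-variation}, the first derivative of $\PH$ along an affine family is the integral of the material derivative $\langle\widehat G,G_i\rangle$ in each chart. This density is bounded by $CM_i$, so $\delta\mathcal F_h$ is continuous in the $C^1$ norm.

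For the right-hand side we use the identities $\varphi_W\,d\sigma_H=\langle W,N\rangle\,d\sigma_\Sigma$ and $|\varphi_W|\,d\sigma_H\le C|W|\,d\sigma_\Sigma$. It remains to see that $H_\Sigma$ is integrable against $d\sigma_\Sigma$. In graph coordinates $|H_\Sigma|\le C|\II|\le Cg^{-1}$, and Corollary~\ref{gen:rig:lem:sublevel} with $\gamma=1$, $n\ge2$, shows that $g^{-1}\,dz$ is integrable. Hence the integral converges absolutely and is continuous in $\|W\|_{C^0}$. By density of smooth fields in $C^1$, it suffices to treat a smooth $W$.

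For smooth $W$, we fix a cutoff $\chi_\rho=\chi(g/\rho)$ on each chart and split $W$ into a part supported on $\Sigma^*$ and a remainder near the characteristic set.
\begin{itemize}
  \item On $\Sigma^*$ the classical regular first-variation formula (Hladky--Pauls, or a direct divergence computation) gives $\int(H_\Sigma-h)\varphi\,d\sigma_H$. The tangential part of $W$ contributes only a divergence term, which integrates to zero because the support is compact.
  \item The remainder $(1-\chi_\rho)W$ has support in $\{g<2\rho\}$. It is the main obstacle, since its $C^1$ norm blows up like $\rho^{-1}$. The first-variation density there is bounded by $C(|W|+|D_x((1-\chi_\rho)W)|)\le C(1+\rho^{-1}|D_xg|)$.
  \item Integrated over the support, this bound contributes $C(\rho^n+\rho^{n-1})$ by \eqref{gen:rig:eq:weighted-sublevel}, which tends to $0$ since $n\ge2$.
  \item The right-hand side of \eqref{gen:eq:general-first-variation} over $\{g<2\rho\}$ is likewise $O(\rho^{n-1})$.
\end{itemize}
Letting $\rho\to0$ proves the formula. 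The same estimate with the mixed-density bound \eqref{gen:eq:general-affine-density} shows that the cutoff errors vanish in the limit.

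For the dilation field, we use $\mathrm{div}D=2n+2$, so $\delta V[D]=\int_\Omega\mathrm{div}D=(2n+2)|\Omega|$ by the divergence theorem. Next, $\delta_\lambda=\exp(sD)$ is the Heisenberg dilation $(z,t)\mapsto(e^sz,e^{2s}t)$. This map scales horizontal perimeter by $e^{(2n+1)s}$, since it multiplies horizontal vectors by $e^s$ and the perimeter is homogeneous of degree $Q-1=2n+1$. Differentiating at $s=0$ gives $\delta\PH[D]=(2n+1)\PH$, and therefore \eqref{gen:eq:general-dilation-first} holds. The flow is smooth, so case (ii) of Lemma~\ref{gen:rig:lem:parameter-sublevel} justifies differentiating under the integral.

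Finally, assume $H_\Sigma=h$ on $\Sigma^*$. The characteristic set is null by Corollary~\ref{gen:rig:lem:sublevel}, so \eqref{gen:eq:general-first-variation} gives $\delta\mathcal F_h[W]=0$ for all $C^1$ $W$. The mixed partial $\partial_r\partial_s$ at the origin is symmetric because $\mathcal F_h(\Omega_{r,s})$ is $C^2$ by Proposition~\ref{gen:rig:prop:second-variation}, and this symmetry passes to the continuous extension of $\mathcal B_h$.

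Acceleration independence comes next. Consider a general $C^2$ family $F_s$ with $\partial_sF_0=W$ and $\partial_s^2F_0=A$. Its second derivative differs from $\mathcal B_h(W,W)$ by the first variation in the direction of the acceleration $A$. This follows by comparing $F_s$ with the affine family $\Id+sW+\tfrac{s^2}{2}A$ via the chain rule on the $C^2$ function of the coefficients; case (iii) of Lemma~\ref{gen:rig:lem:parameter-sublevel} covers this family when $A$ is $C^{1,1}$. Since $\delta\mathcal F_h=0$, the difference vanishes, so $\mathcal B_h$ is the Hessian.
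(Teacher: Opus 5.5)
Your treatment of the first-variation formula and the dilation identities is essentially the paper's. You use the cutoff $\chi_\rho(g)$, the noncharacteristic Hladky--Pauls formula for $\chi_\rho W$, the bound $C\|W\|_{C^1}\rho^{-1}|K\cap\{g<2\rho\}|\le C\rho^{n-1}\|W\|_{C^1}$ for $(1-\chi_\rho)W$, absolute convergence from integrability of $g^{-1}\,dz$, and homogeneity under $\delta_{e^\tau}$. One small point: Proposition~\ref{gen:rig:prop:second-variation} gives the material-derivative formula for $\delta\PH$ only for $C^{1,1}$ fields. You need it for $C^1$ fields, since $\chi_\rho W$ is only $C^1$ even when $W$ is smooth. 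So differentiate under the integral directly, with $\bigl||G_s|-|G_0|\bigr|\le|G_s-G_0|\le C|s|$ as the dominating bound; this is what the paper does.

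The step that fails as written is acceleration independence. For a jointly $C^3$ family the acceleration $A=\partial_s^2F_s|_{s=0}$ is only $C^1$, and for the flow of a $C^2$ field $X$ it is $(DX)X$, again only $C^1$. So case (iii) does not cover $\Id+s_1W+s_2A$ for exactly the families of interest. You also give no reason why $\mathcal F_h(F_s)$ and $\mathcal F_h(\Id+sW+\tfrac{s^2}{2}A)$ have the same second derivative at $0$.

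Both issues disappear if you work at the level of densities. By Proposition~\ref{gen:rig:prop:second-variation}, $\mathcal F_h''(0)$ along $F_s$ is the integral of the second material density. Both $G_{s,a}=-\det[Z_a(f_s),\partial_{x_1}f_s,\ldots,\partial_{x_{2n}}f_s]$ and $v_s$ are polynomials in $(f_s,D_xf_s)$, with $Z_a$ affine. Hence $\partial_s^2G_s|_{s=0}$ is the sum of two pieces: the second derivative of the cofactor along $\Id+sW$, and $\dot G_A$. The same splitting holds for $\partial_s^2v_s|_{s=0}$. By \eqref{gen:rig:eq:-second-derivative}, the density is therefore that of $\mathcal B_h(W,W)$ plus $\langle\widehat G_0,\dot G_A\rangle-h\dot v_A$. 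The integral of this last term is $\delta\mathcal F_h[A]$ by the $C^1$ first-variation formula, and it vanishes by stationarity.
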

\end{samepage}
\begin{proof}
  By linearity, localize $W$ using the fixed atlas and partition above,
  and put $K=\operatorname{supp}W$ in a characteristic chart.
  Apply \eqref{gen:eq:parameter-rank-caps} at $s=0$ with
  $r=n$ and $\gamma=0$:
  \[
    |K\cap\{g<2\rho\}|
    =\int_{K\cap\{g<2\rho\}}1\,dz
    \le C(2\rho)^n.
  \]
  Choose $\chi_\rho(g)=0$ for $g\le\rho$ and $\chi_\rho(g)=1$
  for $g\ge2\rho$, with $|D_z\chi_\rho|\le C/\rho$.
  The product rule gives
  \[
    D_z((1-\chi_\rho)W)
    =(1-\chi_\rho)D_zW-WD_z\chi_\rho,
  \]
  hence
  \[
    |(1-\chi_\rho)W|+|D_z((1-\chi_\rho)W)|
    \le C\|W\|_{C^1}(1+\rho^{-1})\mathbf1_{\{g<2\rho\}}.
  \]
  {For a $C^1$ surface field $Z$ supported in $K$, let
  $G_s$ and $v_s$ be the cofactor and volume density of the restricted
  affine deformation $f_s=\iota+sZ$.}
  Write $\dot G_Z=\partial_sG_s|_0$ and $\dot v_Z=\partial_sv_s|_0$.
  The determinant derivatives in the preceding proof, evaluated
  at $s=0$ with $W=Z$, give
  \[
    |\dot G_Z|+|\dot v_Z|\le C(|Z|+|D_zZ|).
  \]
  The bound $\bigl||G_s|-|G_0|\bigr|\le|G_s-G_0|$ permits
  differentiation under the integral also for $C^1$ fields.
  Thus \eqref{gen:eq:general-fixed-atlas-perimeter}, with
  $\widehat G_0=G_0/|G_0|$ off the null characteristic set, gives
  \[
    \begin{aligned}
      \delta\mathcal F_h[Z]
      &=\left.\frac{d}{ds}\right|_{s=0}
        \int_K\bigl(|G_s|-hv_s\bigr)\,dz\\
      &=\int_K\bigl(\langle\widehat G_0,\dot G_Z\rangle
        -h\dot v_Z\bigr)\,dz,\\
      |\delta\mathcal F_h[Z]|
      &\le\int_K\bigl(|\dot G_Z|+|h|\,|\dot v_Z|\bigr)\,dz\\
      &\le C_h\int_K(|Z|+|D_zZ|)\,dz.
    \end{aligned}
  \]
  The constant $C_h$ depends only on the fixed chart and $h$.
  Substituting $Z=(1-\chi_\rho)W$ and using the cutoff bound above,
  \[
    \begin{aligned}
      |\delta\mathcal F_h[(1-\chi_\rho)W]|
      &\le C_h\int_K\bigl(|(1-\chi_\rho)W|
        +|D_z((1-\chi_\rho)W)|\bigr)\,dz\\
      &\le C_h\|W\|_{C^1}(1+\rho^{-1})
        \int_{K\cap\{g<2\rho\}}1\,dz\\
      &\le C_h\|W\|_{C^1}(1+\rho^{-1})(2\rho)^n\\
      &\le C_h\rho^{n-1}\|W\|_{C^1},\qquad 0<\rho<1.
    \end{aligned}
  \]
  By linearity this is
  \begin{equation}
    \label{gen:eq:first-cutoff-bound}
    |\delta\mathcal F_h[W]-\delta\mathcal F_h[\chi_\rho W]|
    \le C\rho^{n-1}\|W\|_{C^1}\longrightarrow0.
  \end{equation}
  Moreover $|H_\Sigma-h|=O(g^{-1})$ and
  $|\varphi_W|\le Cg^{-1}\|W\|_{C^1}$, so the right side of
  \eqref{gen:eq:general-first-variation} has absolute graph density at most
  $Cg^{-1}\|W\|_{C^1}\,dz$, which is integrable.
  For fixed $\rho>0$, $C^1$ approximation gives the noncharacteristic
  first variation \cite[Theorem~5.1]{HP} for the cutoff fields.
  Summing the localized identities and then letting $\rho\downarrow0$ gives
  \eqref{gen:eq:general-first-variation}. If $H_\Sigma=h$,
  stationarity makes every  acceleration term vanish.

  The flow generated by $D$ is
  $\delta_{e^\tau}(z,t)=(e^\tau z,e^{2\tau}t)$.
  Its coordinate Jacobian is
  $e^{2n\tau}e^{2\tau}=e^{(2n+2)\tau}$, while each horizontal
  frame vector scales by $e^\tau$. The horizontal cofactor
  determinant therefore scales by
  $e^{(2n+2)\tau}/e^\tau=e^{(2n+1)\tau}$, so
  \[
    \PH(\delta_{e^\tau}\Sigma)=e^{(2n+1)\tau}\PH(\Sigma),
    \qquad
    V(\delta_{e^\tau}\Omega)=e^{(2n+2)\tau}|\Omega|.
  \]
  Differentiating at $\tau=0$ along this flow gives
  \[
    \begin{aligned}
      \delta V[D]
      &=\left.\frac{d}{d\tau}\right|_{\tau=0}
        e^{(2n+2)\tau}|\Omega|=(2n+2)|\Omega|,\\
      \delta\PH[D]
      &=\left.\frac{d}{d\tau}\right|_{\tau=0}
        e^{(2n+1)\tau}\PH(\Sigma)=(2n+1)\PH(\Sigma).
    \end{aligned}
  \]
  Since $\delta\mathcal F_h[D]=\delta\PH[D]-h\,\delta V[D]$,
  this is \eqref{gen:eq:general-dilation-first}.
  {For a $C^2$ surface field $W$, choose a compactly
  supported $C^2$ ambient extension $\widetilde W$, and take the mixed
  derivative of $\mathcal F_h$ along
  $\delta_{e^\tau}\circ(\operatorname{Id}+s\widetilde W)$.}
  In each fixed chart, its cofactor is polynomial in $s$ and smooth
  in $\tau$, with continuous parameter derivatives through order two
  and continuous first spatial derivatives. The rank estimates and
  norm regularization in Proposition~\ref{gen:rig:prop:second-variation}
  therefore justify the mixed density derivative and the
  second-variation chain rule. Homogeneity gives
  $(2n+1)\delta\PH[W]-h(2n+2)\delta V[W]$,
  while the chain rule gives
  $\mathcal B_h(D,W)+\delta\mathcal F_h[(\mathrm dD)W]$.
  Here $\mathrm dD$ is the ordinary coordinate derivative, and
  $(\mathrm dD)W$ is the mixed acceleration of this family.
  Equating the two expressions gives
  \begin{equation}
    \label{gen:eq:general-dilation-mixed}
    \mathcal B_h(D,W)+\delta\mathcal F_h[(\mathrm dD)W]
    =(2n+1)\delta\mathcal F_h[W]-h\,\delta V[W].
  \end{equation}
  The $C^1$ bounds extend this identity to every $C^1$ surface field.
\end{proof}

\Needspace{14\baselineskip}
\subsection{The regular Jacobi pairing}
\label{gen:app:C2-pairing}
Riemannian approximation formulas for horizontal curvature
linearization are given in~\cite{VerzellesiVariation}. On a smooth
regular patch, the horizontal commutators give the following identity.

\begin{lemma}[Curvature linearization and translation Jacobi fields]
  \label{gen:lem:weak-translation}
  \leavevmode
  \begin{enumerate}[label=\textup{(\roman*)},leftmargin=2.2em]
    \item On a smooth noncharacteristic hypersurface, the  derivative
    of horizontal mean curvature under a smooth velocity $W$ is
    \begin{equation}
      \label{gen:eq:general-curvature-linearization}
      \dot H_\Sigma=L\varphi_W+Z_W H_\Sigma,\qquad
      Z_W=W+\varphi_W e_{2n}\in T\Sigma.
    \end{equation}
    \item If $\Sigma$ is $C^2$ and $H_\Sigma=h$ is constant, the normalized
    normal speed $\varphi_K$ of each Heisenberg left-translation generator
    $K$ is a weak Jacobi field:
    \begin{equation}
      \label{gen:eq:weak-translation-jacobi}
      Q_\Sigma(\varphi_K,\psi)=0,\qquad \psi\in C_c^1(\Sigma^*).
    \end{equation}
  \end{enumerate}
\end{lemma}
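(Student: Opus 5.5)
For (i), I would compute in a local smooth noncharacteristic patch. Write the deformed surface as a level set $F_s=0$ with $F_0=F$ and $\partial_sF_s|_{s=0}=-\langle W,\nabla F\rangle$. The horizontal mean curvature is the horizontal divergence of $\nabla_HF/|\nabla_HF|$, up to the sign fixed by $e_{2n}=-N_b/|N_b|$. Differentiating in $s$ at fixed ambient points gives the Eulerian variation, which involves only the normal part of $W$. The tangential part $Z_W$ reparametrizes the surface, so it contributes the transport term $Z_WH_\Sigma$.

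For the normal part I would normalize $F$ so that $|\nabla_HF|=1$ on $\Sigma$. Then a normal speed $\varphi$ (normalized by $|N_b|$) corresponds to $\dot F=-\varphi$. The variation of $-\operatorname{div}_H(\nabla_H\dot F-\langle\nabla_H\dot F,\nu\rangle\nu)$ restricted to $\Sigma$ produces $\Delta_b\varphi$, together with the drift and $\alpha$-terms that make it the modified sub-Laplacian with respect to $d\sigma_H$. The zeroth-order terms come from commuting $\nabla^\xi$ with the frame. Using $[X_j,Y_j]=-2T$ and $T+\alpha e_{2n}\in T\Sigma$, the commutator $[e_n,e_{2n}]$ contributes $4\alpha e_n$-type terms. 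Collecting these gives $|\II|^2+4e_n\alpha+4\alpha^2=q_\Sigma$.

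I would not redo all the bookkeeping from scratch. A cleaner route is to match with the known regular second variation \cite[Lemma~7.2, Section~8.2]{HP}. For compactly supported $\psi$ on $\Sigma^*$, the second variation of $\PH-\int H\,dV$ is $Q_\Sigma(\varphi,\psi)=\int(L\varphi)\psi\,d\sigma_H$. Meanwhile $\delta\PH[W]=\int H\varphi_W\,d\sigma_H$, and differentiating this along a second field gives $\int\dot H\,\psi\,d\sigma_H$ plus terms proportional to $H$. Since $\psi$ is arbitrary, this identifies $\dot H=L\varphi+Z_WH$. The main obstacle is exactly this sign and frame bookkeeping: the $4e_n\alpha$ term versus $\Delta_b$'s drift $2\alpha e_n$. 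I would check the result against the $H_1$ formula $\Delta_b^\Sigma=\Delta_b^t+2\alpha e_1$ from \cite{CHMY}.

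For (ii), let $K$ be a left-invariant generator of right translations, i.e.\ a right-invariant field whose flow is left translation $\tau_{p(s)}$. These flows are isometries of $\gTheta$ that preserve $\xi$, $J$ and $\nabla^\xi$. Hence $H$ of the translated surface at the translated point equals $h$, so the Lagrangian derivative $\dot H$ vanishes. If $\Sigma$ were smooth, (i) would give $0=L\varphi_K+Z_Kh=L\varphi_K$, since $h$ is constant. Integrating against $\psi\in C_c^1(\Sigma^*)$ would then give \eqref{gen:eq:weak-translation-jacobi}.

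For a $C^2$ surface, $L\varphi_K$ is not defined classically: $\varphi_K$ is $C^1$ and $q_\Sigma$ is continuous on $\Sigma^*$. So I would argue weakly. Take $\psi\in C_c^1(\Sigma^*)$ and a fixed normal extension. Consider the two-parameter family obtained by translating by $\tau_{p(r)}$ and then pushing by $s\psi$ along the normal. Horizontal perimeter minus $h$ times volume is invariant in $r$, because both are translation invariant. Hence the mixed derivative $\mathcal B_h(K,\psi\cdot)$ vanishes up to a first-variation acceleration term. That acceleration term is zero by stationarity, using \eqref{gen:eq:general-first-variation} localized to $\Sigma^*$.

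It remains to identify the mixed derivative with $Q_\Sigma(\varphi_K,\psi)$ when $\psi$ is supported in $\Sigma^*$. I would do this by $C^2$ approximation of the patch by smooth surfaces, for which the regular formula of \cite{HP} holds. Alternatively, I would integrate by parts only once, so that only first derivatives of $\varphi_K$ and $\psi$ appear. Both sides are continuous under $C^2$ convergence of the patch on $\operatorname{supp}\psi$, so the identity passes to the limit.
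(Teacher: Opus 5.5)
There are genuine gaps in both parts.

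\textbf{Part (i).} Your normalized level-set sketch has the right principal part. With $g=|\nabla_HF|=1$ on $\Sigma$, $P=\Id-\nu\otimes\nu$ and $\lambda=WF$, one gets $\dot H_{\mathrm E}=-\operatorname{Div}_H(g^{-1}P\nabla_H\lambda)=-\Delta_b\varphi_W$ on $\Sigma$, and pulling back adds $WH=\varphi_W\,\nu H+Z_WH$. The whole content of (i) is then the zeroth-order identity $-\nu H=q_\Sigma$ on $\Sigma$; for general $g$ it reads $\operatorname{Div}_H(P\nabla_H\log g)-\nu H=q_\Sigma$. This is a Riccati-type identity for the level-set foliation, and you only assert that collecting commutator terms yields it.

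Your fallback of matching with \cite{HP} cannot replace this computation. The Jacobi form there is the Hessian of $\PH-hV$ at a surface with $H_\Sigma=h$. For nonconstant $H$, ``$\PH-\int H\,dV$'' is not a functional covered by that formula. For $\PH-hV$ itself, the mixed second variation is $\int\dot H\,\varphi_{W_2}\,d\sigma_H+\int(H-h)\,\partial_s(\varphi_{W_2,s}\,d\sigma_{H,s})$, and the second term depends on the full fields, not only on $\psi=\varphi_{W_2}$. Matching with the CMC formula therefore yields at most $\dot H=L\varphi_W$ on CMC surfaces, where $Z_WH=0$ anyway. To get the $Z_WH$ term you would need the complete non-stationary formula with all its $H$-dependent terms, which you neither state nor check.

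This matters because (i) is used exactly on non-CMC surfaces, namely smooth approximants of a $C^2$ patch. The paper proves the zeroth-order identity directly. It uses $A_{ij}-A_{ji}=2\alpha J_{ij}-\ell_i\nu_j+\ell_j\nu_i$ (with $A_{ij}=D_i\nu_j$, $\ell=\log g$), the formula for $T\nu$, and $\tr(A^2)=|\II|^2-4(n-1)\alpha^2$, together with $\Delta_b\psi=g^{-1}\operatorname{Div}_H(gP\nabla_H\psi)$ on $\Sigma$.

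\textbf{Part (ii).} Your reduction is fine as far as it goes. Translation invariance of $\PH$ and $V$, locality of the mixed density, and stationarity on $\operatorname{supp}W$ give $\mathcal B_h(K,W)=0$ for a field $W$ supported in $\Sigma^*$ with $\varphi_W=\psi$. But this only moves the difficulty to the Hessian--Jacobi identity $\mathcal B_h(K,W)=Q_\Sigma(\varphi_K,\psi)$ on a $C^2$ patch, and your justification of that identity fails. Smooth approximants $\Sigma_j$ of a $C^2$ CMC patch are in general not CMC, so there is no identity on $\Sigma_j$ to pass to the limit; continuity of both sides under $C^2$ convergence is beside the point. By (i), the discrepancy on $\Sigma_j$ contains $\int\psi\,Z_{K,j}H_{\Sigma_j}\,d\sigma_{H,j}$. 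This involves a derivative of $H_{\Sigma_j}$, which $C^2$ convergence does not control, and integrating by parts once in $\varphi_K$ and $\psi$ does not remove it.

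The missing idea is to write $Z_{K,j}H_{\Sigma_j}=Z_{K,j}(H_{\Sigma_j}-h)$ and move the derivative onto $\psi Z_{K,j}$. The term becomes $-\int(H_{\Sigma_j}-h)\operatorname{div}_{H,j}(\psi Z_{K,j})\,d\sigma_{H,j}$, bounded by $C\|H_{\Sigma_j}-h\|_{L^\infty}\|\psi\|_{C^1}\to0$. With this device your smooth-case argument works directly on the approximants. Translation invariance and (i) give $0=L_j\varphi_{K,j}+Z_{K,j}H_{\Sigma_j}$, hence $Q_{\Sigma_j}(\varphi_{K,j},\psi)=\int(H_{\Sigma_j}-h)\operatorname{div}_{H,j}(\psi Z_{K,j})\,d\sigma_{H,j}\to0$. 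Meanwhile $Q_{\Sigma_j}(\varphi_{K,j},\psi)\to Q_\Sigma(\varphi_K,\psi)$, since the speeds converge in $C^1$ and the coefficients converge uniformly.

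This is the paper's proof, and the two-parameter family is then unnecessary. The Hessian identification you invoke is Proposition~\ref{gen:prop:regularvariation}, which the paper proves afterwards from (i) using the same device.
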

\begin{proof}
  Choose a smooth local defining function $F$ with $\Sigma=\{F=0\}$
  and $\nabla F$ pointing outward. On a neighborhood where
  $g=|\nabla_HF|>0$, put
  \[
    \nu=\frac{\nabla_HF}{g},\qquad
    P=\Id-\nu\otimes\nu,\qquad
    \alpha=\frac{TF}{g},\qquad \ell=\log g.
  \]
  On $\Sigma$, $\nu=-e_{2n}$ and $J\nu=e_n$.
  Write $D_1,\ldots,D_{2n}=X_1,Y_1,\ldots,X_n,Y_n$,
  and let $J_{ij}$ be the matrix of $J$ in this frame, so
  $[D_i,D_j]=2J_{ij}T$. The ambient horizontal divergence relative
  to Haar measure is denoted by $\operatorname{Div}_H$; it is
  distinct from the surface divergence relative to $d\sigma_H$.
  Set $H=\operatorname{Div}_H\nu$, so $H|_\Sigma=H_\Sigma$,
  and write $\nu H=\sum_i\nu_iD_iH$.

  Let $A_{ij}=D_i\nu_j$, $a=\nabla^\xi_\nu\nu$, and
  $\ell_i=D_i\ell$. Differentiating $D_iF=g\nu_i$ gives
  \[
    A_{ij}-A_{ji}=2\alpha J_{ij}-\ell_i\nu_j+\ell_j\nu_i,
    \qquad A\nu=0,\qquad
    P\nabla_H\ell=a+2\alpha J\nu.
  \]
  Since $[T,D_i]=0$, differentiation of $TF=g\alpha$ also gives
  \[
    T\nu=\nabla_H\alpha+\alpha\nabla_H\ell-\nu T\ell.
  \]
  Applying the horizontal commutators to $a$ and $J\nu$, respectively,
  yields
  \[
    \begin{aligned}
      \operatorname{Div}_H a-\nu H
        &=\tr(A^2)+2\langle J\nu,T\nu\rangle,\\
      \operatorname{Div}_H(J\nu)
        &=2n\alpha-\langle\nabla_H\ell,J\nu\rangle.
    \end{aligned}
  \]
  The terms involving $\nabla_H\ell$ cancel upon substitution, and hence
  \[
    \operatorname{Div}_H(P\nabla_H\ell)-\nu H
      =\tr(A^2)+4e_n\alpha+4n\alpha^2.
  \]
  For $B=PAP$, one has $\tr(A^2)=\tr(B^2)$,
  $B-B^{\mathsf T}=2\alpha PJP$, and $|PJP|^2=2n-2$.
  Its Frobenius norm satisfies $|B|^2=|\II|^2$, so
  \[
    \tr(A^2)=|B|^2-\tfrac12|B-B^{\mathsf T}|^2
       =|\II|^2-4(n-1)\alpha^2.
  \]
  Therefore, on $\Sigma$,
  \begin{equation}
    \label{gen:eq:implicit-jacobi-potential}
    \operatorname{Div}_H(P\nabla_H\log g)-\nu H
      =|\II|^2+4e_n\alpha+4\alpha^2=q_\Sigma.
  \end{equation}

  For a smooth ambient extension of $\psi$, coarea and integration
  by parts give
  \begin{equation}
    \label{gen:eq:implicit-surface-laplacian}
    \Delta_b^\Sigma\psi
      =\frac1g\operatorname{Div}_H(gP\nabla_H\psi)
      \quad\text{on }\Sigma.
  \end{equation}
  Indeed, $d\sigma_H=g\delta(F)\,dV$ in the local coarea formula;
  the derivative of the level-set factor vanishes when paired with
  $P\nabla_H\psi$, since $P\nabla_HF=0$. Equivalently, one may
  replace $\delta$ by smooth one-dimensional approximate identities
  and pass to the limit.

  For a smooth ambient deformation $\Phi_s$ with initial velocity
  $W$, let $F_s=F\circ\Phi_s^{-1}$ and $\lambda=WF$.
  The Eulerian derivatives at zero are
  \[
    \dot F_{\mathrm E}=-\lambda,\qquad
    \dot\nu_{\mathrm E}=-g^{-1}P\nabla_H\lambda,\qquad
    \dot H_{\mathrm E}
       =-\operatorname{Div}_H(g^{-1}P\nabla_H\lambda).
  \]
  Put $\varphi=\lambda/g$, whose restriction to $\Sigma$ is
  $\varphi_W$. By \eqref{gen:eq:implicit-surface-laplacian} and
  \eqref{gen:eq:implicit-jacobi-potential},
  \[
    \begin{aligned}
      \dot H_{\mathrm E}
        &=-\operatorname{Div}_H(P\nabla_H\varphi
                     +\varphi P\nabla_H\log g)\\
        &=-\Delta_b^\Sigma\varphi
                     -\varphi\operatorname{Div}_H(P\nabla_H\log g)\\
        &=L\varphi-\varphi\nu H.
    \end{aligned}
  \]
  Pullback adds $WH$. Since $(W-\varphi\nu)F=0$,
  $W-\varphi\nu=Z_W$ is tangent to $\Sigma$, and
  $\dot H_\Sigma=L\varphi_W+Z_WH_\Sigma$.
  This calculation requires only $g>0$ and includes $\alpha=0$.

  Let $\Psi_s$ be the left-translation flow generated by $K$.
  For every smooth hypersurface $\Sigma_j$, translation invariance gives
  \[
    H_{\Psi_s(\Sigma_j)}\circ\Psi_s=H_{\Sigma_j},
    \qquad
    0=\dot H_{\Sigma_j}
     =L_j\varphi_{K,j}+Z_{K,j}H_{\Sigma_j}.
  \]
  Fix a compact regular $C^2$ patch with $H_\Sigma=h$ and choose
  smooth immersions $\iota_j\to\iota$ in $C^2$ there.
  Pull all quantities back to the same coordinates and transport
  a fixed test $\psi\in C_c^1$ to each patch.
  Write $\rho_j\,dx$ for the perimeter measure and
  $\widehat Z_{K,j}$ for the coordinate representation of $Z_{K,j}$.
  On a fixed compact coordinate set $K_0$ containing the test support,
  regularity and $C^2$ convergence give
  \[
    \begin{gathered}
      \inf_{K_0}\rho_j\ge c>0,\qquad
      \rho_j\to\rho,\quad
      \varphi_{K,j}\to\varphi_K,\quad
      \widehat Z_{K,j}\to\widehat Z_K\quad\hbox{in }C^1,\\
      H_{\Sigma_j}\to h\quad\hbox{uniformly}.
    \end{gathered}
  \]
  Since $Z_{K,j}h=0$, integration by parts gives
  \[
    \begin{aligned}
      Q_{\Sigma_j}(\varphi_{K,j},\psi)
      &=\int_{\Sigma_j}(L_j\varphi_{K,j})\psi\,d\sigma_{H,j}\\
      &=-\int_{\Sigma_j}\psi Z_{K,j}H_{\Sigma_j}\,d\sigma_{H,j}\\
      &=-\int_{\Sigma_j}\psi Z_{K,j}(H_{\Sigma_j}-h)\,d\sigma_{H,j}\\
      &=\int_{\Sigma_j}(H_{\Sigma_j}-h)
           \operatorname{div}_{H,j}(\psi Z_{K,j})\,d\sigma_{H,j}.
    \end{aligned}
  \]
  There is no boundary term because $\psi$ has compact support.
  Here $\operatorname{div}_{H,j}$ is the divergence on $T\Sigma_j$
  relative to $d\sigma_{H,j}=\rho_j\,dx$, so one takes the ordinary
  coordinate divergence of the weighted field
  $\rho_j\psi\widehat Z_{K,j}$ and divides by the density $\rho_j$:
  \[
    \begin{aligned}
      (\operatorname{div}_{H,j}(\psi Z_{K,j}))\circ\iota_j
      &=\rho_j^{-1}\operatorname{div}_x(\rho_j\psi\widehat Z_{K,j})\\
      &=\rho_j^{-1}\sum_{a=1}^{2n}
          \partial_{x_a}(\rho_j\psi\widehat Z_{K,j}^{a})\\
      &=\sum_{a=1}^{2n}\widehat Z_{K,j}^{a}\partial_{x_a}\psi
        +\psi\sum_{a=1}^{2n}\left(
          \partial_{x_a}\widehat Z_{K,j}^{a}
          +\rho_j^{-1}(\partial_{x_a}\rho_j)\widehat Z_{K,j}^{a}
        \right).
    \end{aligned}
  \]
  Hence
  \[
    \begin{aligned}
      \|\operatorname{div}_{H,j}(\psi Z_{K,j})\|_{L^\infty(K_0)}
      &\le C\|\psi\|_{C^1(K_0)},\\
      |Q_{\Sigma_j}(\varphi_{K,j},\psi)|
      &\le\|H_{\Sigma_j}-h\|_{L^\infty(K_0)}
        \|\operatorname{div}_{H,j}(\psi Z_{K,j})\|_{L^\infty(K_0)}
        \int_{K_0}\rho_j\,dx\\
      &\le C\|H_{\Sigma_j}-h\|_{L^\infty(K_0)}\|\psi\|_{C^1(K_0)}
       \longrightarrow0.
    \end{aligned}
  \]
  In the same coordinates the Jacobi form is
  \[
    Q_{\Sigma_j}(\varphi_{K,j},\psi)
    =\int_{K_0}\left(\sum_{a,b=1}^{2n}a_j^{ab}
        \partial_{x_a}\varphi_{K,j}\,\partial_{x_b}\psi
        -q_j\varphi_{K,j}\psi\right)\rho_j\,dx,
  \]
  where {$(a_j^{ab})$ is the matrix of the horizontal gradient
  pairing} and $q_j=q_{\Sigma_j}\circ\iota_j$.
  {$a_j^{ab},q_j,\rho_j$ converge uniformly}, and
  $\varphi_{K,j}\to\varphi_K$ in $C^1$. Therefore
  \[
    Q_\Sigma(\varphi_K,\psi)
    =\lim_{j\to\infty}Q_{\Sigma_j}(\varphi_{K,j},\psi)=0.
  \]
  Partitioning the test support into regular coordinate patches gives
  \eqref{gen:eq:weak-translation-jacobi}, including across \mbox{$\alpha=0$}.
\end{proof}

For the local pairing, let $\Sigma$ be a compact oriented $C^2$
hypersurface bounding a domain, and fix a compact set $\mathcal K\subset\Sigma^*$
with $H_\Sigma=h$ near $\mathcal K$. Variations below agree with the original
immersion outside $\mathcal K$ for all sufficiently small parameters.
{The local CMC assumption removes the term involving
$H_\Sigma-h$ from the affine Hessian and is needed for the exact
Hessian--Jacobi pairing below.}

\begin{proposition}
  \label{gen:prop:regularvariation}
  {The affine Hessian extends continuously to $C^1$ surface fields}
  $W_1,W_2$ supported in $\mathcal K$ and satisfies
  \begin{equation}
    \label{gen:eq:C2-local-pairing}
    \mathcal B_h(W_1,W_2)=Q_\Sigma(\varphi_{W_1},\varphi_{W_2}).
  \end{equation}
\end{proposition}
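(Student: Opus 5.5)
First I reduce to smooth fields. On the compact set $\mathcal K\subset\Sigma^*$ the cofactor satisfies $g\ge c>0$. So for $W$ supported in $\mathcal K$ the density bound \eqref{gen:eq:general-affine-density} has no singular factor. Hence $\mathcal B_h(U,W)$ is controlled by $\|U\|_{C^1}\|W\|_{C^1}$ with constants depending only on $\mathcal K$; this is Proposition~\ref{gen:rig:prop:second-variation} restricted to fields supported in $\mathcal K$. The right side of \eqref{gen:eq:C2-local-pairing} is also continuous in $C^1$ for such fields. Indeed, $\varphi_W=\langle W,N\rangle/|N_b|$ depends linearly on $W$, with $C^1$ norm bounded by $C\|W\|_{C^1}$ because $|N_b|$ is bounded below and $N$ is $C^1$. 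Also $q_\Sigma$ is bounded on $\mathcal K$, since $|\alpha|,|e_n\alpha|,|\II|$ are bounded by powers of $g^{-1}$. So both sides are continuous bilinear forms on $C^1$ fields supported in $\mathcal K$. It suffices to prove \eqref{gen:eq:C2-local-pairing} for $W_1,W_2$ restrictions of smooth ambient fields, mollified so their supports remain in a slightly larger compact subset of $\Sigma^*$ where still $H_\Sigma=h$. By polarization and symmetry, it is enough to treat the diagonal $W_1=W_2=W$.

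Second, I compute the second variation for a smooth patch. The $C^2$ hypersurface only has $H_\Sigma\in C^0$, so I would first do the computation on smooth approximating immersions $\iota_j\to\iota$ in $C^2$, as in the proof of Lemma~\ref{gen:lem:weak-translation}. Fix the affine family $f_s=\iota+sW$ and use the first variation formula \eqref{gen:eq:general-first-variation}, applied on the regular varied surface $\Sigma_s=f_s(\Sigma)$:
\[
  \frac{d}{ds}\mathcal F_h(\Sigma_s)=\int_{\Sigma_s}(H_{\Sigma_s}-h)\varphi^{(s)}\,d\sigma_{H,s},
\]
where $\varphi^{(s)}$ is the normalized normal speed of $\partial_sf_s=W\circ f_s^{-1}$. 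Differentiating at $s=0$ gives two groups of terms. Every term where the factor $H_\Sigma-h$ survives vanishes near $\mathcal K$. This includes the variation of $\varphi^{(s)}$, the variation of the measure, and the acceleration, which is zero for the affine family anyway. The only surviving term is $\int_\Sigma\dot H_\Sigma\,\varphi_W\,d\sigma_H$. By Lemma~\ref{gen:lem:weak-translation}(i), $\dot H_\Sigma=L\varphi_W+Z_WH_\Sigma$. The tangential part $Z_WH_\Sigma$ vanishes where $H_\Sigma\equiv h$. Integrating $\int(L\varphi_W)\varphi_W\,d\sigma_H$ by parts, with no boundary terms because of compact support in $\Sigma^*$, gives $Q_\Sigma(\varphi_W,\varphi_W)$.

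The main obstacle is making this rigorous for a $C^2$ surface. There $H_\Sigma$ is only continuous and $\dot H_\Sigma$ is not classically defined. I would follow the scheme of Lemma~\ref{gen:lem:weak-translation}(ii). On the smooth approximants $\Sigma_j$, the computation above holds exactly but with the error term
\[
  \int (H_{\Sigma_j}-h)\,\Xi_j\,d\sigma_{H,j}.
\]
Here $\Xi_j$ collects the derivative of $\varphi^{(s)}d\sigma_{H,s}$ and the tangential term $-\operatorname{div}_{H,j}(\varphi_{W,j}Z_{W,j})$, the latter after integrating by parts. Each of these involves at most first derivatives of $W$ and second derivatives of $\iota_j$. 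It is therefore uniformly bounded on the compact regular set, while $H_{\Sigma_j}-h\to0$ uniformly there. Both sides then converge. The affine Hessians converge because the chart densities of Proposition~\ref{gen:rig:prop:second-variation} converge uniformly for $g\ge c$. The Jacobi forms converge by the coordinate expression used at the end of Lemma~\ref{gen:lem:weak-translation}, since $a_j^{ab}$, $q_j$ and $\rho_j$ converge uniformly and $\varphi_{W,j}\to\varphi_W$ in $C^1$. This yields \eqref{gen:eq:C2-local-pairing}, and the density argument of the first step extends it to $C^1$ fields.
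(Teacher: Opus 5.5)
Your proposal is correct and follows essentially the same route as the paper. Both arguments use smooth approximants $\iota_j\to\iota$ in $C^2$, differentiate the first-variation formula once, and apply $\dot H_j=L_j\varphi+ZH_j$ with the tangential term integrated by parts. Both then bound the remainder $\int(H_j-h)R_j\,d\sigma_{H,j}$ using only $C^1$ norms of the fields and pass to the limit in the affine Hessian densities (denominator bounded below) and in the Jacobi forms.

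The differences are cosmetic. You reduce first to smooth fields and polarize to the diagonal, whereas the paper approximates $W_{i,j}\to W_i$ directly in the two-parameter family. The paper also makes explicit one step you leave implicit: passing from a single regular patch to all of $\mathcal K$ via a partition of unity and the locality identity $\mathcal B_h(W_1,W_2)=\sum_\ell\mathcal B_h(\eta_\ell W_1,\zeta_\ell W_2)$. This step is routine.
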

Consequently, for the variation classes in
Proposition~\ref{gen:rig:prop:second-variation} with the support condition above,
$\delta(\PH-hV)[W]=0$ and
\begin{equation}
  \label{gen:eq:regular-hessian}
  (\PH-hV)''(0)=Q_\Sigma[\varphi].
\end{equation}
\begin{proof}
  Put $\mathcal F_h=\PH-hV$. {For every $C^1$ surface field
  $W$ supported in $\mathcal K$,}
  the first variation gives
  \[
    \delta\mathcal F_h[W]
    =\int_{\mathcal K}(H_\Sigma-h)\varphi_W\,d\sigma_H=0.
  \]
  Work first on a compact regular coordinate patch and choose smooth
  immersions $\iota_j\to\iota$ in $C^2$ and smooth fields
  $W_{i,j}\to W_i$ in $C^1$, with common compact support for the fields.
  All quantities below are pulled back to the same coordinates.
  For the affine family
  \[
    \iota_{j,s,t}=\iota_j+sW_{1,j}+tW_{2,j},
    \qquad
    B_j=\left.\partial_s\partial_t\right|_{(0,0)}
          \mathcal F_h(\iota_{j,s,t}),
  \]
  write $\rho_{j,s}\,dx$ for the perimeter measure at $t=0$ and
  $\varphi_{2,j,s}$ for the normalized speed of $W_{2,j}$ there.
  Omit $s$ at zero and use dots for $\partial_s|_0$.
  The product rule gives
  \[
    \left.\partial_s(\rho_{j,s}\varphi_{2,j,s})\right|_0
    =\dot\rho_j\varphi_{2,j}+\rho_j\dot\varphi_{2,j}.
  \]
  With $Z_{i,j}=W_{i,j}+\varphi_{i,j}e_{2n,j}$ for $i=1,2$,
  \eqref{gen:eq:general-curvature-linearization} gives
  $\dot H_j=L_j\varphi_{1,j}+Z_{1,j}H_j$. Thus
  \[
    \begin{aligned}
      B_j
      &=\left.\partial_s\right|_0
        \int(H_{j,s}-h)\varphi_{2,j,s}\rho_{j,s}\,dx\\
      &=\int\dot H_j\varphi_{2,j}\rho_j\,dx
        +\int(H_j-h)\left.\partial_s(\rho_{j,s}\varphi_{2,j,s})\right|_0dx\\
      &=\int(L_j\varphi_{1,j})\varphi_{2,j}\rho_j\,dx
        +\int\varphi_{2,j}Z_{1,j}H_j\,\rho_j\,dx\\
      &\quad+\int(H_j-h)\left.\partial_s
                       (\rho_{j,s}\varphi_{2,j,s})\right|_0dx.
    \end{aligned}
  \]
  Integration by parts in the first term gives
  \[
    \begin{aligned}
      \int(L_j\varphi_{1,j})\varphi_{2,j}\,d\sigma_{H,j}
      &=\int\bigl(\langle\nabla_b\varphi_{1,j},\nabla_b\varphi_{2,j}\rangle
           -q_j\varphi_{1,j}\varphi_{2,j}\bigr)\,d\sigma_{H,j}\\
      &=Q_{\Sigma_j}(\varphi_{1,j},\varphi_{2,j}).
    \end{aligned}
  \]
  Since $h$ is constant, the second term becomes
  \[
    \begin{aligned}
      \int\varphi_{2,j}Z_{1,j}H_j\,d\sigma_{H,j}
      &=\int\varphi_{2,j}Z_{1,j}(H_j-h)\,d\sigma_{H,j}\\
      &=-\int(H_j-h)
          \operatorname{div}_{H,j}(\varphi_{2,j}Z_{1,j})\,d\sigma_{H,j}.
    \end{aligned}
  \]
  Both boundary terms vanish by compact support. Here the divergence
  is taken on the full tangent bundle with respect to $d\sigma_{H,j}$.
  Combining the three terms yields
  \[
    \begin{aligned}
      B_j&=Q_{\Sigma_j}(\varphi_{1,j},\varphi_{2,j})
        +\int(H_j-h)R_j\,d\sigma_{H,j},\\
      R_j&=\left.\rho_j^{-1}\partial_s
        (\rho_{j,s}\varphi_{2,j,s})\right|_0
        -\operatorname{div}_{H,j}(\varphi_{2,j}Z_{1,j}).
    \end{aligned}
  \]

  To estimate the first term of $R_j$ directly, put
  $A_{k,j}=\partial_{x_k}\iota_j$ and
  $A_{k,j,s}=A_{k,j}+s\partial_{x_k}W_{1,j}$.
  In the outward-oriented chart, the normal flux identity is
  \[
    \varphi_{2,j,s}\,d\sigma_{H,j,s}
    =\langle W_{2,j},N_{j,s}\rangle\,d\sigma_{j,s}
    =\iota_{j,s,0}^*(\iota_{W_{2,j}}dV).
  \]
  Since $dV=dz\,dt$ and $W_{2,j}$ is independent of $s$ in the
  fixed ambient coordinates, this gives
  \[
    \begin{aligned}
      \rho_{j,s}\varphi_{2,j,s}
      &=\det[W_{2,j},A_{1,j,s},\ldots,A_{2n,j,s}],\\
      \left.\partial_s(\rho_{j,s}\varphi_{2,j,s})\right|_0
      &=\sum_{k=1}^{2n}\det[W_{2,j},A_{1,j},\ldots,
          \partial_{x_k}W_{1,j},\ldots,A_{2n,j}].
    \end{aligned}
  \]
  In each summand $\partial_{x_k}W_{1,j}$ replaces $A_{k,j}$.
  If $\widehat Z_{1,j}$ denotes the coordinate vector field of $Z_{1,j}$,
  the other term is
  \[
    \begin{aligned}
      \operatorname{div}_{H,j}(\varphi_{2,j}Z_{1,j})
      &=\rho_j^{-1}\sum_{a=1}^{2n}
          \partial_{x_a}(\rho_j\varphi_{2,j}\widehat Z_{1,j}^{a})\\
      &=\sum_{a=1}^{2n}\widehat Z_{1,j}^{a}\partial_{x_a}\varphi_{2,j}
        +\varphi_{2,j}\sum_{a=1}^{2n}
          \left(\partial_{x_a}\widehat Z_{1,j}^{a}
            +\rho_j^{-1}(\partial_{x_a}\rho_j)\widehat Z_{1,j}^{a}\right).
    \end{aligned}
  \]
  Fix a compact coordinate set $K_0$ containing the supports; all norms
  in the following estimates are taken on $K_0$.
  Since $\iota_j\to\iota$ in $C^2$, the cofactor densities satisfy
  $\rho_j\to\rho$ in $C^1$. The patch is noncharacteristic, so
  $m=\min_{K_0}\rho>0$. For all sufficiently large $j$,
  \[
    \rho_j(x)\ge\rho(x)-\|\rho_j-\rho\|_\infty
      \ge\frac m2=:c>0,
    \qquad
    \|\rho_j\|_{C^1}+\|\rho_j^{-1}\|_{C^1}\le C.
  \]
  To see which derivatives are needed, the same flux identity gives
  \[
    \begin{aligned}
      \varphi_{i,j}
      &=\frac{\det[W_{i,j},A_{1,j},\ldots,A_{2n,j}]}{\rho_j},\\
      \partial_{x_a}\varphi_{i,j}
      &=\frac{\partial_{x_a}\det[W_{i,j},A_{1,j},\ldots,A_{2n,j}]}{\rho_j}
        -\frac{\det[W_{i,j},A_{1,j},\ldots,A_{2n,j}]}{\rho_j^2}
           \partial_{x_a}\rho_j.
    \end{aligned}
  \]
  Differentiating this determinant uses only $D_xW_{i,j}$ and
  $D_xA_{k,j}=D_x\partial_{x_k}\iota_j$.
  The cofactor formula for $e_{2n,j}$ likewise uses only
  $\iota_j,D_x\iota_j$ and division by $\rho_j$; hence
  \[
    \|e_{2n,j}\|_{C^1}\le C,
    \qquad
    \|\varphi_{i,j}\|_{C^1}\le C\|W_{i,j}\|_{C^1},
    \qquad
    \|Z_{i,j}\|_{C^1}\le C\|W_{i,j}\|_{C^1}.
  \]
  {The chart components of $Z_{i,j}$ satisfy the same
  bound.} Indeed, with $A_j=D_x\iota_j$ and ordinary coordinate
  matrix transpose,
  \[
    A_j\widehat Z_{i,j}=Z_{i,j},
    \qquad
    \widehat Z_{i,j}=(A_j^{\mathsf T}A_j)^{-1}A_j^{\mathsf T}Z_{i,j}.
  \]
  The matrices $(A_j^{\mathsf T}A_j)^{-1}A_j^{\mathsf T}$ are uniformly
  bounded in $C^1$, since $A_j\to D_x\iota$ in $C^1$ and the limiting
  immersion has full rank on $K_0$. Therefore
  $\|\widehat Z_{i,j}\|_{C^1}\le C\|W_{i,j}\|_{C^1}$.
  Substituting these bounds in the expanded divergence gives
  \[
    \begin{aligned}
      \|\operatorname{div}_{H,j}(\varphi_{2,j}Z_{1,j})\|_\infty
      &\le C\bigl(\|\widehat Z_{1,j}\|_\infty\|D_x\varphi_{2,j}\|_\infty
        +\|\varphi_{2,j}\|_\infty\|D_x\widehat Z_{1,j}\|_\infty\bigr)\\
      &\quad+C c^{-1}\|D_x\rho_j\|_\infty
                  \|\varphi_{2,j}\|_\infty\|\widehat Z_{1,j}\|_\infty\\
      &\le C\|W_{1,j}\|_{C^1}\|W_{2,j}\|_{C^1}.
    \end{aligned}
  \]
  Combining this with the determinant derivative above,
  \[
    \begin{aligned}
      \left\|\left.\partial_s(\rho_{j,s}\varphi_{2,j,s})\right|_0
        \right\|_\infty
      &\le C\|W_{2,j}\|_\infty\|D_xW_{1,j}\|_\infty,\\
      \|R_j\|_\infty
      &\le c^{-1}\left\|\left.\partial_s
            (\rho_{j,s}\varphi_{2,j,s})\right|_0\right\|_\infty
        +\|\operatorname{div}_{H,j}(\varphi_{2,j}Z_{1,j})\|_\infty\\
      &\le C\|W_{1,j}\|_{C^1}\|W_{2,j}\|_{C^1}\le C.
    \end{aligned}
  \]
  The last constant is independent of $j$, because
  $W_{i,j}\to W_i$ in $C^1$ makes both field norms uniformly bounded.
  In particular, no derivative of $H_j-h$ remains. Since
  $H_j\to h$ uniformly on $K_0$,
  \[
    \begin{aligned}
      |B_j-Q_{\Sigma_j}(\varphi_{1,j},\varphi_{2,j})|
      &\le\|H_j-h\|_{L^\infty(K_0)}\|R_j\|_{L^\infty(K_0)}
            \int_{K_0}\rho_j\,dx\\
      &\le C\|H_j-h\|_{L^\infty(K_0)}\longrightarrow0.
    \end{aligned}
  \]
  The approximating hypersurfaces need not have constant mean curvature.

  The speeds converge in $C^1$, {and $a_j^{ab}$, $q_j$,
  and $\rho_j$ converge uniformly.} Hence
  \[
    \begin{aligned}
      Q_{\Sigma_j}(\varphi_{1,j},\varphi_{2,j})
      &=\int_{K_0}\left(\sum_{a,b=1}^{2n}a_j^{ab}
          \partial_{x_a}\varphi_{1,j}\,\partial_{x_b}\varphi_{2,j}
          -q_j\varphi_{1,j}\varphi_{2,j}\right)\rho_j\,dx\\
      &\longrightarrow Q_\Sigma(\varphi_{W_1},\varphi_{W_2}).
    \end{aligned}
  \]
  {To pass to the limit in $B_j$,} let $G_j(s,t,x)$ and
  $v_j(s,t,x)$ be the
  cofactor and volume densities of $\iota_{j,s,t}$.
  Their determinant formulas show that
  $G_j,\partial_sG_j,\partial_tG_j,\partial_s\partial_tG_j$ and
  $\partial_s\partial_tv_j$, evaluated at $(0,0)$, converge uniformly
  under the stated approximations. With
  $\widehat G_j=G_j/\rho_j$ and
  $\Pi_j=I-\widehat G_j\otimes\widehat G_j$ at $(0,0)$,
  \eqref{gen:rig:eq:-second-derivative} gives
  \[
    \begin{aligned}
      B_j=\int_{K_0}\biggl(
        &\frac{\langle\Pi_j\partial_sG_j,\Pi_j\partial_tG_j\rangle}{\rho_j}
         +\langle\widehat G_j,\partial_s\partial_tG_j\rangle
         -h\partial_s\partial_tv_j\biggr)\,dx
      \longrightarrow\mathcal B_h(W_1,W_2).
    \end{aligned}
  \]
  The denominator is bounded below by $c$, so this limit is also
  uniform at the level of the integrand. The same formula gives
  $|\mathcal B_h(W_1,W_2)|\le C\|W_1\|_{C^1}\|W_2\|_{C^1}$.
  Together with the preceding limits, this proves
  \eqref{gen:eq:C2-local-pairing} on one regular patch and its continuous
  extension to $C^1$ fields, including on patches crossing $\alpha=0$.

  Choose a finite partition $\sum_\ell\eta_\ell=1$ near $\mathcal K$
  subordinate to regular patches. In each patch choose $\zeta_\ell=1$
  near $\operatorname{supp}(\eta_\ell W_1)$, with compact support in that
  patch. Since $\varphi_{\eta_\ell W_1}=\eta_\ell\varphi_{W_1}$,
  bilinearity and locality give
  \[
    \begin{aligned}
      \mathcal B_h(W_1,W_2)
      &=\sum_\ell\mathcal B_h(\eta_\ell W_1,\zeta_\ell W_2)\\
      &=\sum_\ell Q_\Sigma(\eta_\ell\varphi_{W_1},
                           \zeta_\ell\varphi_{W_2})\\
      &=\sum_\ell Q_\Sigma(\eta_\ell\varphi_{W_1},\varphi_{W_2})
       =Q_\Sigma(\varphi_{W_1},\varphi_{W_2}).
    \end{aligned}
  \]
  Finally, for a variation with initial velocity $W$ and 
  acceleration $A$ in the fixed ambient coordinates, the 
  second-derivative formula and stationarity give
  \[
    \mathcal F_h''(0)
    =\mathcal B_h(W,W)+\delta\mathcal F_h[A]
    =\mathcal B_h(W,W)=Q_\Sigma[\varphi_W].
  \]
  This proves \eqref{gen:eq:regular-hessian} for the stated variation classes.
\end{proof}

{\begin{proof}[Proof of Theorem~\ref{thm:c2-global-hessian}]
  The three variation classes are those of
  Lemma~\ref{gen:rig:lem:parameter-sublevel}.
  Proposition~\ref{gen:rig:prop:second-variation} gives their $C^2$
  perimeter and volume functions, the material density formulas,
  and the unique continuous $C^1$ affine extension with the stated bound.
  Proposition~\ref{gen:rig:prop:global-stationarity} gives
  stationarity and acceleration independence when $H_\Sigma=h$.
\end{proof}}

{\begin{proposition}[Global Jacobi pairing under a rank condition]
  \label{gen:prop:rank-global-jacobi}
  Let $\Omega\subset\Hn$ be bounded with compact $C^2$ boundary
  $\Sigma$, and assume $H_\Sigma=h$ on $\Sigma^*$. Near each
  characteristic point $p$, write $\Sigma$ as $t=f_p(z)$ and put
  $G_p=-\nabla_H(t-f_p)$ in the fixed horizontal frame. Suppose
  $\operatorname{rank}D_zG_p(p)\ge r$ at every such point for a
  fixed integer $4\le r\le2n$. Then $Q_\Sigma$ is absolutely
  convergent on normalized speeds of $C^1$ surface fields, and
  \begin{equation}
    \label{eq:abstract-global-hessian}
    \mathcal B_h(W_1,W_2)
    =Q_\Sigma(\varphi_{W_1},\varphi_{W_2})
  \end{equation}
  for all $C^1$ surface fields $W_1,W_2$. For a jointly $C^3$
  ambient diffeomorphism family or a flow of a $C^2$ ambient field
  with initial velocity $W$, one has
  $\mathcal F_h''(0)=Q_\Sigma[\varphi_W]$.
  In particular, these conclusions hold for every compact $C^2$
  CMC boundary when $n\ge4$.
\end{proposition}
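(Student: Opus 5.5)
The plan is to reduce the global identity \eqref{eq:abstract-global-hessian} to the regular pairing of Proposition~\ref{gen:prop:regularvariation} by cutting off near the characteristic set, and then to pass to the limit on both sides using the rank-$r$ energy cap of Proposition~\ref{gen:prop:characteristic-energy}.

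\emph{Step 1: finiteness of the characteristic set.} I would first show that, for $r\ge4$, there are only finitely many characteristic points. If this fails, a direct covering argument by caps is used instead.

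\emph{Step 2: absolute convergence of $Q_\Sigma$.} Cover the characteristic set by finitely many graph patches. By \eqref{gen:eq:characteristic-energy-cap}, whose proof only needs the estimate $\int g^{-3}\,dz<\infty$, which holds for $\gamma=3<r$, the quantities $|\nabla_b\varphi_W|^2$ and $|q_\Sigma|\varphi_W^2$ are integrable on each patch. The integral over $B_\varepsilon(z_p)$ is at most $C\varepsilon^{2n-3}\|W\|_{C^1}^2$. Away from the patches everything is bounded. Cauchy--Schwarz then gives absolute convergence of $Q_\Sigma(\varphi_{W_1},\varphi_{W_2})$ together with the bound $|Q_\Sigma(\varphi_{W_1},\varphi_{W_2})|\le C\|W_1\|_{C^1}\|W_2\|_{C^1}$. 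By this bound and the bound \eqref{gen:rig:eq:C1-hessian-bound} for $\mathcal B_h$, both sides of \eqref{eq:abstract-global-hessian} are continuous bilinear forms on $C^1$ fields.

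\emph{Step 3: cutoff and the $\mathcal B_h$ side.} Take the cutoff $\chi_\rho(g)$ used in the proof of Proposition~\ref{gen:rig:prop:global-stationarity}, which vanishes where $g\le\rho$. Then $\chi_\rho W_i$ is supported in a compact subset of $\Sigma^*$, and Proposition~\ref{gen:prop:regularvariation} gives
\[
\mathcal B_h(\chi_\rho W_1,\chi_\rho W_2)=Q_\Sigma(\chi_\rho\varphi_{W_1},\chi_\rho\varphi_{W_2}),
\]
using $\varphi_{\chi W}=\chi\varphi_W$. For the left side I do not expect convergence of $\chi_\rho W$ in $C^1$, because $|D\chi_\rho|\sim\rho^{-1}$. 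Instead I would redo the density estimate from Proposition~\ref{gen:rig:prop:second-variation}. The difference $\mathcal B_h(W_1,W_2)-\mathcal B_h(\chi_\rho W_1,\chi_\rho W_2)$ is the integral of a bilinear density bounded by $C(1+g^{-1})(|U|+|DU|)(|W|+|DW|)$, with at least one entry carrying a factor $1-\chi_\rho$. That density is supported in $\{g<2\rho\}$ and bounded by $C(1+g^{-1})\rho^{-1}$ there, or by $C(1+g^{-1})\rho^{-2}$ when both factors are cut. Its integral is at most $C\rho^{-2}\rho^{r-1}=C\rho^{r-3}\to0$ by \eqref{gen:eq:parameter-rank-caps} with $\gamma=1$ and rank $r$.

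\emph{Step 4: the $Q_\Sigma$ side (expected main obstacle).} The term $\varphi_W\nabla_b\chi_\rho$ has size $g^{-1}\cdot\rho^{-1}$ on $\{\rho<g<2\rho\}$, and here $\rho^{-1}\le 2g^{-1}$. So it is dominated by $g^{-2}$, and its square integrates to at most $\int_{g<2\rho}g^{-4}\,dx\cdot g\,dz$. In $d\sigma_H=g\,dz$ this is $\int g^{-3}\,dz$ over $\{g<2\rho\}$, which is $O(\rho^{r-3})\to0$. Dominated convergence handles the remaining terms. This shows $\chi_\rho\varphi_W\to\varphi_W$ in the energy norm, and the identity follows.

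\emph{Step 5: variations.} For jointly $C^3$ families or $C^2$ flows, $\mathcal F_h''(0)=\mathcal B_h(W,W)+\delta\mathcal F_h[A]$, and the acceleration term vanishes by stationarity (Proposition~\ref{gen:rig:prop:global-stationarity}). When $n\ge4$, Corollary~\ref{gen:rig:lem:sublevel} gives rank $\ge n\ge4$, so the hypothesis holds automatically.
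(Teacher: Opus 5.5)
Your argument is the paper's proof. You cut off at level $\rho$ near the characteristic set; the paper uses $\eta(|N_b|/\rho)$, which is comparable to your $\chi_\rho(g)$ because $g=|N_b|/|\langle N,T\rangle|$ is intrinsic there. You then bound both the energy of $(1-\chi_\rho)\varphi_{W_i}$ and the Hessian difference by $O(\rho^{r-3})$ through the rank-$r$ sublevel estimate with $\gamma=3$ and $\gamma=1$, apply the regular pairing to the cut-off fields, and use stationarity in the chain rule, exactly as the paper does.

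The one flaw is Step 1, which you should delete. When $r<2n$ the rank hypothesis does not make characteristic points isolated, and nothing later needs finiteness, because the sublevel estimate \eqref{gen:eq:parameter-rank-caps} already handles the whole characteristic set through a finite cover by rank charts.
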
}

{\begin{proof}
  Proposition~\ref{gen:rig:prop:second-variation} gives the continuous
  affine Hessian, and Proposition~\ref{gen:rig:prop:global-stationarity}
  gives stationarity. Put $\kappa=|N_b|$. This function is Lipschitz
  on $\Sigma$ and $C^1$ on $\Sigma^*$. In a vertical graph chart,
  with $g=|G_p|$, one has
  \[
    \kappa=\frac{g}{\sqrt{1+g^2}},\qquad
    d\sigma_\Sigma=\sqrt{1+g^2}\,dz,\qquad
    d\sigma_H=\kappa\,d\sigma_\Sigma.
  \]
  Cover the compact characteristic set by finitely many rank
  charts from Lemma~\ref{gen:lem:rank-differentiation}. In each
  chart $g$ is comparable to $\kappa$, while $\kappa$ has a positive
  lower bound on the remaining compact set. Thus
  \eqref{gen:eq:parameter-rank-caps} gives
  \begin{equation}
    \label{gen:eq:global-kappa-sublevel}
    \int_{\{0<\kappa<\rho\}}\kappa^{-\gamma}\,d\sigma_\Sigma
      \le C_\gamma\rho^{r-\gamma},\qquad 0\le\gamma<r,
  \end{equation}
  for sufficiently small $\rho>0$. The characteristic set has
  zero surface measure. If it is empty, all the following cutoffs
  equal one for small $\rho$.

  The pointwise graph estimates in the proof of
  Proposition~\ref{gen:prop:characteristic-energy}, together with
  compactness of the regular complement, give
  \[
    |\varphi_W|\le C\kappa^{-1}\|W\|_{C^1},\qquad
    |\nabla_b\varphi_W|\le C\kappa^{-2}\|W\|_{C^1},\qquad
    |q_\Sigma|\le C\kappa^{-2}.
  \]
  In particular, \eqref{gen:eq:global-kappa-sublevel} with
  $\gamma=3$ bounds the energy over $\{0<\kappa<\rho\}$ by
  $C\rho^{r-3}\|W\|_{C^1}^2$.
  Choose a smooth function $0\le\eta\le1$, equal to zero on
  $[0,1]$ and to one on $[2,\infty)$, and set
  $\chi_\rho=\eta(\kappa/\rho)$. Then $\chi_\rho\in C^1(\Sigma)$
  has compact support in $\Sigma^*$ and
  $|\nabla_\Sigma\chi_\rho|\le C\rho^{-1}$.
  Writing $\varphi_i=\varphi_{W_i}$, the cutoff derivative contributes at most
  \[
    C\rho^{-2}\int_{\{0<\kappa<2\rho\}}\varphi_i^2\,d\sigma_H
      \le C\rho^{-2}\|W_i\|_{C^1}^2
          \int_{\{0<\kappa<2\rho\}}\kappa^{-1}\,d\sigma_\Sigma
      \le C\rho^{r-3}\|W_i\|_{C^1}^2.
  \]
  The product rule consequently yields
  \begin{equation}
    \label{gen:eq:abstract-energy-cutoff}
    \int_{\Sigma^*}\left(
       |\nabla_b((1-\chi_\rho)\varphi_i)|^2
       +(1+|q_\Sigma|)|(1-\chi_\rho)\varphi_i|^2
    \right)d\sigma_H
    \le C\rho^{r-3}\|W_i\|_{C^1}^2.
  \end{equation}
  Since $r>3$, this energy tends to zero, and
  $Q_\Sigma(\varphi_1,\varphi_2)$ is absolutely convergent.

  By \eqref{gen:eq:general-affine-density}, the mixed affine
  perimeter density is bounded by $C(1+\kappa^{-1})M_1M_2$
  relative to $d\sigma_\Sigma$, where
  $M_i=|W_i|+|D_zW_i|$ in each fixed chart. The volume density
  is bounded by $CM_1M_2$. The fields $\chi_\rho W_i$ and
  $(1-\chi_\rho)W_i$ have first derivatives bounded by
  $C\rho^{-1}\|W_i\|_{C^1}$, and their contribution to the
  difference of the Hessians is supported in $\{\kappa<2\rho\}$.
  Bilinearity, locality, and \eqref{gen:eq:global-kappa-sublevel}
  with $\gamma=0,1$ therefore give
  \begin{equation}
    \label{gen:eq:abstract-hessian-cutoff}
    \begin{aligned}
      |\mathcal B_h(W_1,W_2)
        -\mathcal B_h(\chi_\rho W_1,\chi_\rho W_2)|
      &\le C\rho^{-2}\|W_1\|_{C^1}\|W_2\|_{C^1}
         \int_{\{0<\kappa<2\rho\}}(1+\kappa^{-1})\,d\sigma_\Sigma\\
      &\le C\rho^{r-3}\|W_1\|_{C^1}\|W_2\|_{C^1}.
    \end{aligned}
  \end{equation}
  The density argument in Proposition~\ref{gen:rig:prop:second-variation}
  extends this bound to $C^1$ fields. By
  Proposition~\ref{gen:prop:regularvariation},
  \[
    \mathcal B_h(\chi_\rho W_1,\chi_\rho W_2)
      =Q_\Sigma(\chi_\rho\varphi_1,\chi_\rho\varphi_2).
  \]
  Letting $\rho\downarrow0$ in
  \eqref{gen:eq:abstract-energy-cutoff} and
  \eqref{gen:eq:abstract-hessian-cutoff} proves
  \eqref{eq:abstract-global-hessian}.
  For the stated variation families, the second-derivative chain rule is
  $\mathcal F_h''(0)=\mathcal B_h(W,W)+\delta\mathcal F_h[A]$,
  where $A$ is the acceleration field. Stationarity removes the
  last term. Finally, the automatic rank bound
  $\operatorname{rank}D_zG_p(p)\ge n$ permits $r=n$ when $n\ge4$.
\end{proof}}

\section{Model geometry and the regular Jacobi form}
\label{sec:regular}

\subsection{Angular conventions}
On the unit sphere $\Sphere\subset\CC^n$, let $\zeta$ be the position
vector and $d\omega$ the round measure. Put
{$\vartheta=-\Im\ip\zeta{d\zeta}$} and
$\CRD=\ker\vartheta$; its complex
structure is multiplication by $i$, and $\gSphere$ is its round metric.
The Hopf field is $\TSphere|_\zeta=i\zeta$.

{The $\CRD$-valued gradient $\gradSphere\psi$ is defined by
\[
  \gSphere(\gradSphere\psi,Y)=Y\psi\qquad(Y\in\CRD).
\]
The corresponding nonpositive CR sub-Laplacian is
\begin{equation}
  \label{eq:angular-operator-definition}
  \DeltaSphere\psi=\operatorname{div}_{d\omega}(\gradSphere\psi).
\end{equation}}

\subsection{General rotational hypersurfaces}
\label{subsec:revolution}
Let $I_f\subset\RR$ be an open interval and let $f\in C^2(I_f)$ be positive.
Consider the rotational hypersurface
\[
  \Sigma_f=\{(f(t)\omega,t):t\in I_f,\ \omega\in\Sphere\},
\]
oriented by the normal pointing toward increasing $|z|$.
Fix $t_*\in I_f$ and define the phase $U$ and horizontal arclength $u$ by
\[
  U(t)=\int_{t_*}^t\frac{ds}{f(s)^2},\qquad
  u(t)=\int_{t_*}^t\sqrt{f'(s)^2+f(s)^{-2}}\,ds.
\]
Since $u'>0$, we may use $u$ in place of $t$. Set
\[
  \Psi_f(t,\zeta)=\bigl(f(t)e^{-iU(t)}\zeta,t\bigr).
\]
The curves with $\zeta$ fixed are called the $u$-curves;
we write $\partial_u=(u')^{-1}\partial_t$.
Let $M=I_f\times\Sphere$, and put
\[
  HTM=\RR\partial_t\oplus\CRD,\qquad
  HT\Sigma_f=\xi\cap T\Sigma_f,\qquad
  J_M(a\partial_t+Y)=iY\quad(Y\in\CRD).
\]

\begin{theorem}
  \label{thm:cr-isomorphism}
  The hypersurface $\Sigma_f$ is noncharacteristic, and $\Psi_f$ is a
  $C^2$ diffeomorphism identifying the horizontal structures
  \[
    \Psi_f:(M,HTM,J_M)\longrightarrow(\Sigma_f,HT\Sigma_f,J').
  \]
  More precisely,
  \[
    d\Psi_f(HTM)=HT\Sigma_f,\qquad
    d\Psi_f\circ J_M=J'\circ d\Psi_f,
  \]
  \[
    d\Psi_f(\partial_u)=-e_n,\qquad d\Psi_f(\CRD)=\xi'.
  \]
  The contact form, horizontal metric and perimeter measure satisfy
  {
  \[
    \Psi_f^*\Theta=f^2\vartheta,\qquad
    \left.\Psi_f^*\LTheta\right|_{HTM}
      =du^2+f^2\left.\gSphere\right|_{\CRD},\qquad
    \Psi_f^*d\sigma_H=f^{2n}\,du\,d\omega.
  \]}
  In particular, for each fixed $t$, the map
  $\zeta\mapsto\Psi_f(t,\zeta)$ is a CR diffeomorphism from
  $(\Sphere,\CRD,i)$ onto the slice $\Sigma_f\cap\{t=\mathrm{constant}\}$
  with its induced CR structure $(\xi',J)$; its horizontal metric is
  $f(t)^2\gSphere|_{\CRD}$.
\end{theorem}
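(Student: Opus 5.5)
The plan is a direct computation in the coordinates $(t,\zeta)$, followed by the change of parameter $t\mapsto u$. Write $w(t,\zeta)=f(t)e^{-iU(t)}\zeta$, so that $\Psi_f=(w,t)$. Since $f\in C^2$ and $U'=f^{-2}\in C^2$, the map $\Psi_f$ is $C^2$. It is injective because $|w|=f(t)$ and $t$ recovers the slice, after which $\zeta=e^{iU}w/f$. It is an immersion by the differential computed below, so it is a $C^2$ diffeomorphism onto $\Sigma_f$. Because $u'>0$ is continuous, replacing $t$ by $u$ changes nothing here.

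\emph{Contact form.} Differentiate to get
\[
  dw=(f'-ifU')e^{-iU}\zeta\,dt+fe^{-iU}\,d\zeta .
\]
Using the Hermitian product (conjugate-linear in the second slot) and $|\zeta|=1$,
\[
  \ip w{dw}=f(f'+ifU')\,dt+f^2\ip\zeta{d\zeta}.
\]
Hence $\Im\ip w{dw}=f^2U'\,dt-f^2\vartheta=dt-f^2\vartheta$. This gives $\Psi_f^*\Theta=dt-\Im\ip w{dw}=f^2\vartheta$. Consequently $d\Psi_f$ maps $HTM=\ker\vartheta=\RR\partial_t\oplus\CRD$ into $\xi\cap T\Sigma_f$. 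Since $\vartheta\neq0$, $\Theta|_{T\Sigma_f}\neq0$; thus $T\Sigma_f\neq\xi$ everywhere, so $\Sigma_f$ is noncharacteristic. The dimension count $2n-1$ gives $d\Psi_f(HTM)=HT\Sigma_f$.

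\emph{Metric and $J$.} On horizontal vectors, $\LTheta$ is the Euclidean norm of the $z$-component, because $X_j,Y_j$ have $z$-parts $\partial_{x_j},\partial_{y_j}$. The $z$-part of $d\Psi_f(\partial_t)$ is $(f'-ifU')e^{-iU}\zeta$. Its squared norm is $f'^2+f^{-2}=u'^2$, so $d\Psi_f(\partial_u)$ is a unit vector. For $Y\in\CRD$ we have $\ip Y\zeta=0$, and the $z$-part of $d\Psi_f(Y)$ is $fe^{-iU}Y$. This vector has norm $f|Y|$ and is real-orthogonal to the complex line $\CC e^{-iU}\zeta$, which contains the $z$-part of $d\Psi_f(\partial_u)$. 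Together these give $\Psi_f^*\LTheta|_{HTM}=du^2+f^2\gSphere|_{\CRD}$.

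Multiplication by $i$ preserves $e^{-iU}\CRD$, so $d\Psi_f(\CRD)$ is a $J$-invariant subspace of $HT\Sigma_f$ of dimension $2n-2$. It is therefore $\xi'$, and $J'\circ d\Psi_f=d\Psi_f\circ i$ there. The vector $J\,d\Psi_f(\partial_u)$ lies in the complex line $\CC e^{-iU}\zeta$ and is orthogonal to $d\Psi_f(\partial_u)$. It is also horizontal and orthogonal to $HT\Sigma_f$, so it spans the horizontal normal, and $d\Psi_f(\partial_u)=\pm e_n$. I will fix the sign using $e_n=-Je_{2n}$, the fact that $e_{2n}=-N_b/|N_b|$ points toward decreasing $|z|$, and the explicit $z$-part $(f'-ifU')e^{-iU}\zeta$. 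Then $J'e_n=0=J_M\partial_u$ completes the identification $d\Psi_f\circ J_M=J'\circ d\Psi_f$.

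\emph{Perimeter measure.} On $\Sigma^*$, $d\sigma_H=|N_b|\,d\sigma_\Sigma=\pm\iota_{e_{2n}}dV|_{T\Sigma}$, since $\varphi\,d\sigma_H=\iota_W dV$ with $W=-e_{2n}$ and $\varphi_W=1$. Evaluate this on the frame $e_1,\dots,e_{2n-1}$, $T+\alpha e_{2n}$; it equals $1$ up to sign, because the $\gTheta$-frame is orthonormal and $dV$ is its volume form. Now pull back. Take the frame $\partial_u$, a $\gSphere$-orthonormal frame $Y_1,\dots,Y_{2n-2}$ of $\CRD$, and $\TSphere$ with $\vartheta(\TSphere)=1$. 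Their images are: a unit vector, $2n-2$ orthogonal vectors of length $f$ in $\xi'$, and a vector with $\Theta$-value $f^2$ (its horizontal part lies in $HT\Sigma_f$ and does not affect the value). The form therefore evaluates to $f^{2n-2}\cdot f^2$. Since $d\omega$ evaluates to $1$ on $(Y_1,\dots,Y_{2n-2},\TSphere)$, this gives $\Psi_f^*d\sigma_H=f^{2n}\,du\,d\omega$. The slice statement follows by restricting to $t=\mathrm{const}$: there $d\Psi_f|_{\CRD}$ intertwines $i$ and $J$ and scales the metric by $f^2$.

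The main obstacle is bookkeeping rather than analysis. I expect the work to lie in the sign of $d\Psi_f(\partial_u)=-e_n$ under the paper's orientation and $J$ conventions, and in the normalization $\iota_{e_{2n}}dV$ versus $\Theta\wedge(\text{horizontal volume})$ in the measure step. I will verify both at a single point, say $\zeta=(1,0,\dots,0)$, $n=2$.
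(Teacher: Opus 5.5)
Your proposal is correct. Its core is the paper's own computation: the pullback $\Psi_f^*\Theta=f^2\vartheta$, reading $\LTheta$ off the $z$-components, and recognizing $d\Psi_f(\CRD)$ as the unique $J$-invariant $(2n-2)$-plane in $HT\Sigma_f$. You depart from the paper in two steps.

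\emph{Noncharacteristicity and the normal frame.} The paper uses the defining function $g=\abs z-f(t)$. It shows $dg(W)=\LTheta(fJV,W)$ for horizontal $W$, where $V=d\Psi_f(\partial_t)$. Hence $\nabla_Hg=fJV$ and $\abs{\nabla_Hg}=fu'>0$, and the same formula gives $e_{2n}=-JV/u'$ with the sign built in. Your argument via $\Psi_f^*\Theta\neq0$ is shorter and needs no defining function. However, it only determines the horizontal normal line, so the sign in $d\Psi_f(\partial_u)=-e_n$ is still open. That check is a single computation valid at every point and for every $n$, so there is no need to specialize to one point and $n=2$. Write $\omega=e^{-iU}\zeta$. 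Then $JV$ has $z$-part $(f^{-1}+if')\omega$ and $t$-component $\Im\ip{f\omega}{(f^{-1}+if')\omega}=-ff'$, so $dg(JV)=f^{-1}+ff'^2>0$. Since $\ip{e_{2n}}N=-\abs{N_b}<0$, the vector $e_{2n}$ points into $\{g<0\}$. This forces $e_{2n}=-JV/u'$ and $e_n=-Je_{2n}=-d\Psi_f(\partial_u)$.

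\emph{Perimeter measure.} The paper multiplies the Euclidean area $f^{2n-1}\sqrt{1+f'^2}\,dt\,d\omega$ of the rotational graph by $\abs{\nabla_Hg}/\abs{\nabla_Eg}$. You instead evaluate the flux form $-\iota_{e_{2n}}dV|_{T\Sigma_f}$ on the pushed-forward frame. Your route is intrinsic: it uses only the pullbacks of $\Theta$ and $\LTheta$ already established and the fact that $dV$ is the $\gTheta$-volume form, so it would carry over to other contact parametrizations. The paper's route is more elementary but relies on the Euclidean graph structure.

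One phrase in your measure step should be corrected. The horizontal part $d\Psi_f(\TSphere)-f^2T$ is not tangent to $\Sigma_f$ when $f'\neq0$: its $e_{2n}$-component is $f^2\alpha$, with $\alpha=-f'/(fu')$. What is true is that $d\Psi_f(\TSphere)-f^2(T+\alpha e_{2n})\in HT\Sigma_f$. In any case, every horizontal vector in the last slot drops out, because $e_{2n}$ and the images of $\partial_u$ and $\CRD$ already span $\xi$. So the value $f^{2n}$ is unaffected.
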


\begin{proof}
  {
  Write $\omega=e^{-iU(t)}\zeta$ and
  \[
    V:=d\Psi_f(\partial_t)=\bigl((f'-i/f)\omega,1\bigr),\qquad
    d\Psi_f(Y)=\bigl(fe^{-iU}Y,0\bigr).
  \]
  The inverse is $(z,t)\mapsto(t,e^{iU(t)}z/f(t))$. Moreover,
  \[
    {\begin{aligned}
      \Theta(V)&=1-\Im\bigl(f(f'+i/f)\bigr)=0,\\
      \Theta(d\Psi_f(Y))&=-f^2\Im\langle\zeta,Y\rangle
                         =f^2\vartheta(Y).
    \end{aligned}}
  \]
  Thus $\Psi_f^*\Theta=f^2\vartheta$ and
  $d\Psi_f(HTM)=HT\Sigma_f$. If $Y\in\CRD$, then
  $\langle\zeta,Y\rangle=0$, so $V$ and $d\Psi_f(Y)$ are orthogonal and
  \[
    |d\Psi_f(a\partial_t+Y)|_{\LTheta}^2
    =a^2(f'^2+f^{-2})+f^2|Y|^2
    =(u')^2a^2+f^2|Y|^2.
  \]
  This proves the metric formula and $|V|_{\LTheta}=u'$.

  The defining function $g(z,t)=|z|-f(t)$ is positive on the side
  toward which the chosen normal points. For a horizontal
  $W=(w,\tau)$, {using $\tau=\Im\langle z,w\rangle$} gives
  \[
    dg(W)=\Re\langle\omega,w\rangle-f'\tau
          =\Re\langle(1+iff')\omega,w\rangle
          =\LTheta(fJV,W).
  \]
  Hence
  \[
    \nabla_Hg=fJV,\qquad |\nabla_Hg|_{\LTheta}=fu'>0,
  \]
  proving noncharacteristicity. Since $e_{2n}$ is the inward horizontal
  unit normal,
  \[
    e_{2n}=-\frac{JV}{u'},\qquad
    e_n=-Je_{2n}=-\frac{V}{u'}=-d\Psi_f(\partial_u).
  \]

  For $Y\in\CRD$, one has $d\Psi_f(iY)=Jd\Psi_f(Y)$.
  The $J$-invariant space $d\Psi_f(\CRD)$ has dimension $2n-2$, hence
  equals $\xi'$. Together with $V=-u'e_n$, this proves the asserted
  intertwining relation and the CR statement on each slice.

  Finally, for the rotational graph $(t,\omega)\mapsto(f(t)\omega,t)$,
  \[
    d\sigma_E=f^{2n-1}\sqrt{1+f'^2}\,dt\,d\omega,\qquad
    |\nabla_Eg|=\sqrt{1+f'^2}.
  \]
  Since the phase rotation preserves $d\omega$ and
  $d\sigma_H=(|\nabla_Hg|_{\LTheta}/|\nabla_Eg|)d\sigma_E$,
  \[
    \Psi_f^*d\sigma_H
    =\frac{fu'}{\sqrt{1+f'^2}}\,
       f^{2n-1}\sqrt{1+f'^2}\,dt\,d\omega
    =f^{2n}u'\,dt\,d\omega=f^{2n}\,du\,d\omega.
  \]
  }
\end{proof}

\subsection{The Pansu sphere}
\label{subsec:pansu-sphere}
Let $h_{\mathrm P}(r)=\tfrac12(\arccos r+r\sqrt{1-r^2})$ for $0\le r\le1$ and
\[
  \Omega=\{(z,t):\abs z<1,\ \abs t<h_{\mathrm P}(\abs z)\},\qquad
  \PSphere=\partial\Omega.
\]
The outward-oriented $\PSphere$ has poles $(0,\pm\pi/4)$;
removing them gives the regular part $\PSphere^*$. The substitution
$|z|=\cos u$ combines the two caps in a single chart.  With
$I=(-\pi/2,\pi/2)$, define
\[
  \Psi(u,\zeta)=\bigl(\cos u\,e^{-iu}\zeta,
  \frac12(u+\sin u\cos u)\bigr).
\]
The phase $e^{-iu}$ makes the $u$-curves horizontal: for fixed $\zeta$
and $z=r e^{i\theta}\zeta$, the contact condition is
$t'+r^2\theta'=0$. Here $t'=r^2=\cos^2u$, so $\theta'=-1$;
choosing $\theta(0)=0$ gives the displayed phase.

For $u\ge0$ the height is $h_{\mathrm P}(\cos u)$; for $u\le0$ it is
$-h_{\mathrm P}(\cos u)$. Since $dt/du=\cos^2u>0$ on $I$, this is
a smooth diffeomorphism onto the regular part, including the equator.
Each endpoint collapses to one pole.

\begin{corollary}
  \label{prop:geometry}
  One has
  \begin{equation}
    \label{eq:pansu-pullback}
    \Psi^*\Theta=\cos^2u\,\vartheta,\qquad
    \left.\Psi^*\LTheta\right|_{\RR\partial_u\oplus\CRD}
    =du^2+\cos^2u\left.\gSphere\right|_{\CRD},\quad
    \Psi^*d\sigma_H=\cos^{2n}u\,du\,d\omega.
  \end{equation}
  Moreover, $d\Psi(\CRD)=\xi'$ and $d\Psi(iY)=J'd\Psi(Y)$ for
  $Y\in\CRD$.
  The frame and shape data on $\PSphere^*$ are
  \begin{equation}
    \label{eq:pansu-frame-shape}
    \begin{gathered}
      \alpha=\tan u,\qquad e_n=-d\Psi(\partial_u),\qquad
      e_n\alpha=-\sec^2u,\\
      \Shape(e_n)=2e_n,\qquad \left.\Shape\right|_{\xi'}=\Id,
      \qquad |\Shape|^2=2n+2.
    \end{gathered}
  \end{equation}
  In particular, the horizontal mean curvature is $H_{\PSphere}=2n$.
\end{corollary}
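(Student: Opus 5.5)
The plan is to reduce everything to Theorem~\ref{thm:cr-isomorphism}, applied to the profile of the Pansu sphere, and then to compute the frame and shape data by differentiating the explicit parametrization $\Psi$ in the fixed coordinates, where $\nabla^\xi$ is ordinary differentiation of $\CC^n$-components.

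\textbf{Identifying $\Psi$ with $\Psi_f$.} Parametrize the regular part by height $t=\tfrac12(u+\sin u\cos u)$, $u\in I$. Since $dt/du=\cos^2u>0$, this is a legitimate reparametrization. Put $f(t)=\cos u$. Then $f'=df/dt=-\sin u/\cos^2u$, and hence
\[
  f'^2+f^{-2}=\frac{\sin^2u+\cos^2u}{\cos^4u}=\sec^4u .
\]
With $t_*=0$, the horizontal arclength of Theorem~\ref{thm:cr-isomorphism} therefore satisfies $du_{\mathrm{arc}}/dt=\sec^2u=du/dt$, and the phase satisfies $dU/dt=f^{-2}=du/dt$. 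So both equal $u$, and $\Psi_f=\Psi$. I will also check the orientation. The region $\Omega$ is $\{|z|<f(t)\}$ in these coordinates, so the outward normal points toward increasing $|z|$, which matches the convention of Theorem~\ref{thm:cr-isomorphism}. With $f^2=\cos^2u$, that theorem then gives \eqref{eq:pansu-pullback}, $d\Psi(\CRD)=\xi'$, $d\Psi(iY)=J'd\Psi(Y)$ and $e_n=-d\Psi(\partial_u)$ directly.

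\textbf{Computing $\alpha$.} Use the defining function $g=|z|-f(t)$ from that proof. There $\nabla_Hg=fJV$ with $|\nabla_Hg|=fu'$ and $u'=\sec^2u$, while $Tg=-f'$. Hence
\[
  \alpha=\frac{-f'}{fu'}=\frac{\sin u/\cos^2u}{\cos u\,\sec^2u}=\tan u,
\]
and $e_n\alpha=-\partial_u\tan u=-\sec^2u$.

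\textbf{Computing $\Shape$.} The $z$-component of $\partial_u\Psi$ is $(-\sin u-i\cos u)e^{-iu}\zeta=-ie^{-2iu}\zeta$. This gives
\[
  e_n=ie^{-2iu}\zeta,\qquad e_{2n}=Je_n=-e^{-2iu}\zeta .
\]
Since $e_n=-\partial_u$, we get $\nabla^\xi_{e_n}e_{2n}=-2ie^{-2iu}\zeta=-2e_n$. Because $J'e_n=0$, this yields $\Shape(e_n)=2e_n$.

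For a unit $Y\in\CRD$, the unit vector $X=e^{-iu}Y=d\Psi(Y)/\cos u$ spans the corresponding direction in $\xi'$. Differentiating along it gives $\nabla^\xi_Xe_{2n}=-e^{-2iu}Y/\cos u=-e^{-iu}X/\cos u$. Therefore
\[
  \Shape X=\frac{e^{-iu}}{\cos u}X+\tan u\,iX
  =\frac{\cos u-i\sin u+i\sin u}{\cos u}X=X .
\]
Thus $\Shape=\operatorname{diag}(2,1,\dots,1)$, so $|\Shape|^2=4+(2n-2)=2n+2$ and $H_{\PSphere}=\tr\Shape=2n$.

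\textbf{Main obstacle.} The main risk is sign conventions: the orientation of $e_{2n}$ as the inward normal, $J$ acting on $\CC^n$ as multiplication by $i$, and the sign of $\alpha$. I will cross-check them with the identity $T+\alpha e_{2n}\in T\PSphere$ at a regular point with $u>0$.
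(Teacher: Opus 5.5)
Your proposal is correct and follows the paper's proof essentially step by step. You identify $\Psi$ with $\Psi_f$ via $t=\tfrac12(u+\sin u\cos u)$, $f=\cos u$, $t_*=0$, read off the pullbacks, CR identities and $e_n=-d\Psi(\partial_u)$ from Theorem~\ref{thm:cr-isomorphism}, and differentiate $e_{2n}=-e^{-2iu}\zeta$ in the parallel frame to get $\Shape=\operatorname{diag}(2,1,\dots,1)$. The only minor difference is that you obtain $\alpha=\tan u$ as $Tg/\abs{\nabla_Hg}$ from the defining function $\abs z-f(t)$. The paper instead reads it off from the tangency relation $T+\alpha e_{2n}\in T\PSphere$ applied to $d\Psi(\TSphere)$, which is the cross-check you already planned.
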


\begin{proof}
  Apply Theorem~\ref{thm:cr-isomorphism} with $t_*=0$ and
  \[
    t_{\mathrm P}(v)=\frac12(v+\sin v\cos v),\qquad
    f(t_{\mathrm P}(v))=\cos v.
  \]
  Since $t_{\mathrm P}'(v)=\cos^2v>0$, this defines a positive smooth
  $f$ on $(-\pi/4,\pi/4)$. The chain rule gives
  \[
    \begin{aligned}
      f'(t_{\mathrm P}(v))
      &=\frac{d(\cos v)/dv}{t_{\mathrm P}'(v)}
       =-\frac{\sin v}{\cos^2v},\\
      (U\circ t_{\mathrm P})'(v)
      &=f(t_{\mathrm P}(v))^{-2}t_{\mathrm P}'(v)
       =\frac{\cos^2v}{\cos^2v}=1,\\
      (u\circ t_{\mathrm P})'(v)
      &=\sqrt{f'(t_{\mathrm P}(v))^2+f(t_{\mathrm P}(v))^{-2}}\,
        t_{\mathrm P}'(v)\\
      &=\sqrt{\frac{\sin^2v}{\cos^4v}+\frac1{\cos^2v}}\,\cos^2v=1.
    \end{aligned}
  \]
  The normalization at $v=0$ gives $U(t_{\mathrm P}(v))=u(t_{\mathrm P}(v))=v$.
  Thus $\Psi_f(t_{\mathrm P}(u),\zeta)=\Psi(u,\zeta)$, and the theorem
  yields \eqref{eq:pansu-pullback}, the CR identities, and
  $e_n=-d\Psi(\partial_u)$.
  In the parallel horizontal frame, $d\Psi(\partial_u)=-ie^{-2iu}\zeta$,
  so $e_{2n}=-e^{-2iu}\zeta$.
  The vector $d\Psi(\TSphere)$ has horizontal part
  $i\cos u\,e^{-iu}\zeta$ and contact component $\cos^2u$;
  the tangency relation becomes
  \[
    0=-\ip{e_{2n}}{d\Psi(\TSphere)}+\alpha\cos^2u
      =\cos u\,\Re(i e^{iu})+\alpha\cos^2u
      =-\sin u\cos u+\alpha\cos^2u.
  \]
  Since $\cos u>0$, this gives $\alpha=\tan u$.

  For a $\gSphere$-orthonormal frame $\varepsilon_a$ of $\CRD$, set
  $e_a=(\cos u)^{-1}d\Psi(\varepsilon_a)$, $1\le a\le2n-1$, $a\ne n$.
  {Differentiating $e_{2n}$ in the fixed horizontal frame yields}
  \[
    {-\nabla^\xi_{e_n}e_{2n}=2e_n,\qquad
    -\nabla^\xi_{e_a}e_{2n}=\frac1{\cos u}e^{-2iu}\varepsilon_a
    =e_a-\tan u\,J(e_a)\quad(a\ne n).}
  \]
  Adding $\alpha J'$ gives
  \[
    \Shape(e_n)=2e_n,\qquad \Shape(e_a)=e_a\quad(a\ne n),
    \qquad |\Shape|^2=2n+2.
  \]
  Thus $H_{\PSphere}=2+(2n-2)=2n$, and
  $e_n\alpha=-\partial_u\tan u=-\sec^2u$.
\end{proof}

For $R>0$, $\delta_R(z,t)=(Rz,R^2t)$ gives the radius-$R$ sphere.
Perimeter and volume scale as $R^{2n+1}$ and $R^{2n+2}$, so
\[
  (\PH-\tfrac{2n}{R}V)(\delta_R\Sigma)
  =R^{2n+1}(\PH-2nV)(\Sigma).
\]
Fixed dilations preserve boundary regularity, so stability and the
unrestricted Hessian index are invariant under these dilations.

\subsection{Regular second variation}
\label{subsec:regular-variation}
Proposition~\ref{gen:prop:regularvariation} identifies the stationary
local Hessian of $\PH-2nV$ on compact regular supports with
$Q_{\PSphere}$, using $H_{\PSphere}=2n$. Its Jacobi expression is
$L=-\Delta_b^{\PSphere}-q_{\PSphere}$; the Friedrichs realization
is constructed in Section~\ref{subsec:form-closure}.

For the Pansu sphere, the frame computed in Corollary~\ref{prop:geometry}
gives $\alpha=\tan u$, $e_n\alpha=-\cos^{-2}u$, and
$|\Shape|^2=2n+2$.  Thus \eqref{eq:potential-symmetric-shape} becomes
\begin{equation}
  \label{eq:pansu-jacobi-potential}
  q_{\PSphere}=2n+2-\frac4{\cos^2u}
  +2(n+1)\tan^2u=\frac{2n-2}{\cos^2u}.
\end{equation}
Thus $L=-\Delta_b^{\PSphere}-(2n-2)\cos^{-2}u$ on
$C_c^\infty(\PSphere^*)$. Pulling its form back to the coordinates $\Psi$
separates the radial and angular terms.

The metric and measure in \eqref{eq:pansu-pullback} give, for every
$h\in C_c^\infty(\PSphere^*)$ with $\psi=h\circ\Psi$,
\[
  \abs{\nabla_b^{\PSphere}h}^2\circ\Psi=\abs{\partial_u\psi}^2+
  \cos^{-2}u\abs{\gradSphere\psi}^2,
\]
\begin{equation}
  \label{eq:separated-laplacian}
  (\Delta_b^{\PSphere}
  h)\circ\Psi=\cos^{-2n}u\,\partial_u(\cos^{2n}u\,\partial_u\psi)+\cos^{-2}u\DeltaSphere\psi.
\end{equation}
Substituting the gradient identity above, the perimeter density, and
\eqref{eq:pansu-jacobi-potential} into \eqref{gen:eq:regular-hessian}
and polarizing gives the symmetric bilinear form on
$C_c^\infty(I\times S^{2n-1};\RR)$ given by
\begin{equation}
  \label{eq:jacobi-form}
  \begin{aligned}
    Q(\varphi,\psi)=\int_I\!\int_{S^{2n-1}}
    &\left[\cos^{2n}u\,\partial_u\varphi\,\partial_u\psi\right.\\
    &\left.\quad+\cos^{2n-2}u
    \{\gSphere(\gradSphere\varphi,\gradSphere\psi)-(2n-2)\varphi\psi\}\right]
    \dd\omega\dd u.
  \end{aligned}
\end{equation}

\begin{remark}[{Comparison with Montefalcone's eigenvalue problem}]
  \label{rem:montefalcone-comparison}
  {On the unit Pansu sphere $\PSphere$, Montefalcone's radial variable is
  $\rho=r=\cos u$,}
  so the stability potential in~\cite[Equation~(18)]{MontefalconeStability} is
  $q_{\PSphere}=(2n-2)\sec^2u$, as in \eqref{eq:jacobi-form}.
  The spectral problem in~\cite[Theorem~2.10]{MontefalconeSpectrum} concerns
  $-\Delta_b^{\PSphere}$, with radial eigenvalues $k(k+2n)$ for
  $k\in\NN_0$; the Jacobi operator here is
  $L=-\Delta_b^{\PSphere}-q_{\PSphere}$.
  A weighted eigenvalue problem $L\varphi=\lambda\sec^2u\,\varphi$
  is different again: its operator is $\cos^2u\,L$ in
  $L^2(\sec^2u\,d\sigma_H)$, whereas the spectrum below refers
  to $L$ in $\mathcal X=L^2(d\sigma_H)$.
\end{remark}

\section{Real energy domain and Hopf harmonics}
\label{sec:closedform}
{The two characteristic poles correspond to the endpoints of $I$, where
the perimeter density vanishes and the Jacobi potential is singular.
{The change of weight $y=\cos^n u\,\varphi$ removes the
perimeter weight. The transformed finite-energy domain consists of
functions $y$ for which $y,\partial_u y\in L^2(I\times\Sphere)$,
both endpoint traces vanish, and
$\sec u\,\gradSphere y\in L^2(I\times\Sphere)$.} Endpoint Hardy
inequalities control the inverse-square terms, and angular truncation
followed by radial approximation shows that functions supported away
from the poles form a core.
}

\subsection{The energy space}
\label{subsec:form-domain}
Via pullback by $\Psi$, we identify $\mathcal X$ with the real weighted space
\[
  \left\{\varphi\colon I\times\Sphere\to\RR\text{ measurable}:
    \int_I\!\int_{\Sphere}\cos^{2n}u\,\varphi^2\,d\omega\,du<\infty\right\}.
\]
This identification is an isometry since
$\Psi^*d\sigma_H=\cos^{2n}u\,du\,d\omega$ and the poles have measure zero.
Throughout, $C_c^k(M)$ denotes the real-valued $C^k$ functions with
compact support in $M$, for $k\in\NN_0\cup\{\infty\}$.

\Needspace{10\baselineskip}
\begin{definition}[Energy space]
  \label{def:energy}
  The real energy space is
  \[
    \cE_n=\left\{\varphi\in\mathcal X:
      \norm\varphi_{\cE_n}^2<\infty\right\},
  \]
  where
  \begin{equation}
    \label{eq:energy-norm}
    \norm\varphi_{\cE_n}^2=
    \int_I\!\int_{S^{2n-1}}\left[
    \cos^{2n}u\abs{\partial_u\varphi}^2+
    \cos^{2n-2}u(\abs{\gradSphere\varphi}^2+\varphi^2)
    \right]\dd\omega\dd u.
  \end{equation}
  Derivatives are distributional, and $\gradSphere$ is the closed weak
  $\CRD$-gradient. The form $Q$ on $\cE_n$ is given by
  \eqref{eq:jacobi-form}, and $Q[\varphi]\coloneqq Q(\varphi,\varphi)$.
\end{definition}

Let $L$ be the Friedrichs operator associated with $Q$, constructed in
Section~\ref{subsec:form-closure}. The real angular Hilbert space is
\[
  \cH=L^2(\Sphere,d\omega).
\]
For $(d,m)\in\mathcal I$, let $\cH_{d,m}$ be the restrictions to
$\Sphere$ of real homogeneous harmonic polynomials $P$ of degree $d$
on $\RR^{2n}$ for which
\[
  -\TSphere^2(P|_{\Sphere})=m^2(P|_{\Sphere}).
\]
Write $M_{d,m}=\dim_{\RR}\cH_{d,m}$. The parameters
$\Lambda_{d,m}$ and $\varrho_{d,m}$ are defined in
Section~\ref{subsec:intro-spectrum}.

\subsection{Real spherical harmonics and Hopf frequencies}
{We use the complex spherical-harmonic decomposition
\cite{Folland} and the dimension and eigenvalue formulas in
\cite[Corollary~2.6 and Section~4]{CowlingKlimaSikora}, recast in
real Hopf-frequency form.}
The space $\cH_{0,0}$ consists of the constants, and
$\cH_{1,1}=\{\zeta\mapsto\langle v,\zeta\rangle_{\RR^{2n}}:
 v\in\RR^{2n}\}$ consists of the real coordinate functions.
{In the dimension formula below, a binomial coefficient
with lower entry $-1$ is zero.}
\Needspace{12\baselineskip}

\begin{proposition}
  \label{thm:angular}
  The real angular space has the orthogonal decomposition
  \[
    \cH=\widehat\bigoplus_{(d,m)\in\mathcal I}\cH_{d,m},\qquad
    -\DeltaSphere|_{\cH_{d,m}}=\Lambda_{d,m},
    \quad \Lambda_{d,m}=d(d+2n-2)-m^2.
  \]
  Put $a=(d+m)/2$, $b=(d-m)/2$, and let $\delta_{m0}$ be $1$ for
  $m=0$ and $0$ otherwise. Then
  \[
    M_{d,m}=(2-\delta_{m0})\left[
      \binom{n+a-1}{a}\binom{n+b-1}{b}
      -\binom{n+a-2}{a-1}\binom{n+b-2}{b-1}\right].
  \]
  The least positive eigenvalue of $-\DeltaSphere$ is $2n-2$;
  its eigenspace is $\cH_{1,1}$ and has real dimension $2n$.
\end{proposition}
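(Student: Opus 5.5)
The plan is to reduce to the classical bigraded decomposition of complex spherical harmonics on $\Sphere\subset\CC^n$ and then pass to real Hopf-frequency spaces. For $p,q\in\NN_0$, let $\cH^{\CC}_{p,q}$ be the complex harmonic polynomials that are homogeneous of degree $p$ in $z$ and of degree $q$ in $\bar z$, restricted to the sphere. By \cite{Folland}, $L^2(\Sphere;\CC)=\widehat\bigoplus_{p,q}\cH^{\CC}_{p,q}$ orthogonally. The Hopf field $\TSphere=i\zeta$ generates the circle action $\zeta\mapsto e^{i\theta}\zeta$, so $-i\TSphere$ acts on $\cH^{\CC}_{p,q}$ by the scalar $p-q$, and $-\TSphere^2$ acts by $(p-q)^2$. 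The full spherical Laplacian acts on degree $d=p+q$ by $-d(d+2n-2)$.

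Next relate the CR sub-Laplacian to these operators. Since $T_\zeta\Sphere=\CRD\oplus\RR\TSphere$ orthogonally, and $\TSphere$ is a unit Killing field of divergence zero, \eqref{eq:angular-operator-definition} gives
\[
  \DeltaSphere=\Delta_{\Sphere}-\TSphere^2 .
\]
Hence $-\DeltaSphere=d(d+2n-2)-(p-q)^2$ on $\cH^{\CC}_{p,q}$, matching \cite[Section~4]{CowlingKlimaSikora}.

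For the real spaces, set $m=|p-q|$ and $d=p+q$, so $(d,m)\in\mathcal I$, with $a=\max(p,q)$ and $b=\min(p,q)$. Complex conjugation maps $\cH^{\CC}_{p,q}$ onto $\cH^{\CC}_{q,p}$. Therefore the real parts of $\cH^{\CC}_{p,q}\oplus\cH^{\CC}_{q,p}$ form a real space, and its elements are exactly the real restrictions of degree-$d$ harmonics with $-\TSphere^2=m^2$. This space is $\cH_{d,m}$.
\begin{itemize}
  \item If $m>0$, its real dimension is $2\dim_{\CC}\cH^{\CC}_{a,b}$.
  \item If $m=0$, the space $\cH^{\CC}_{a,a}$ is conjugation-invariant, and its real dimension is $\dim_{\CC}\cH^{\CC}_{a,a}$.
\end{itemize}
The complex dimension is $\binom{n+a-1}{a}\binom{n+b-1}{b}-\binom{n+a-2}{a-1}\binom{n+b-2}{b-1}$ \cite[Corollary~2.6]{CowlingKlimaSikora}. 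This follows because $\Delta$ maps bihomogeneous $(a,b)$ polynomials onto $(a-1,b-1)$ ones, which have the indicated counts. That gives $M_{d,m}$.

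The real orthogonal decomposition follows from the complex one by taking real parts. Distinct $(d,m)$ are separated by the self-adjoint operators $\Delta_{\Sphere}$ and $\TSphere^2$, which gives orthogonality. Density holds because real and imaginary parts of complex harmonics span $L^2(\Sphere;\RR)$.

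For the least positive eigenvalue, $\Lambda_{d,m}=d(d+2n-2)-m^2\ge d(d+2n-2)-d^2=d(2n-2)$. Equality forces $m=d$, and then $\Lambda_{d,d}=d(2n-2)$.
\begin{itemize}
  \item When $d=0$, $\Lambda_{0,0}=0$ is not positive.
  \item When $d=1$, the only option is $m=1$, giving $\Lambda=2n-2$.
  \item When $d\ge2$ and $m=d$, $\Lambda\ge 2(2n-2)$.
  \item When $d\ge2$ and $m\le d-2$, $\Lambda\ge d(2n-2)+4d-4$, which exceeds $2n-2$.
\end{itemize}
So the minimum $2n-2$ is attained only on $\cH_{1,1}$. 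The dimension formula gives $M_{1,1}=2n$, and this space consists of the real linear functions.

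The main obstacle is justifying $\DeltaSphere=\Delta_{\Sphere}-\TSphere^2$ with the paper's weak definition of the closed $\CRD$-gradient, which matters for the domain claim. I would verify it on polynomials and then use that finite sums of harmonics form a core for the closed form.
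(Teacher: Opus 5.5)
Your proposal is correct and follows essentially the paper's argument. The paper likewise decomposes the degree-$d$ harmonics by Hopf frequency, though it phrases this directly as real Fourier modes of the circle action rather than as real parts of $\cH^{\CC}_{a,b}\oplus\cH^{\CC}_{b,a}$; it counts dimensions through bihomogeneous monomials and the Fischer decomposition with the factor $2-\delta_{m0}$, derives $-\DeltaSphere=\Lambda_{d,m}$ from the splitting $\Delta^{\Sphere}=\DeltaSphere+\TSphere^2$, and concludes from $\Lambda_{d,m}\ge 2(n-1)d\ge 2n-2$ for $d\ge1$, with equality only at $(d,m)=(1,1)$.
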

{The proof is given in Appendix~\ref{app:angular-decomposition}.}

Fix a real smooth orthonormal eigenbasis $(Y_j)_{j\ge0}$ of
$-\DeltaSphere$, with eigenvalues $\Lambda_j$. The angular form domain is
\[
  \operatorname{Dom}((-\DeltaSphere)^{1/2})
  =\left\{h\in\cH:
    \int_{\Sphere}|\gradSphere h|^2\,d\omega<\infty\right\},
\]
with the gradient understood weakly. For every $h$ in this space,
the finite spectral sums converge to $h$ in the form norm of
$-\DeltaSphere$. In particular,
\[
  \int_{S^{2n-1}}\abs{\gradSphere h}^2\dd\omega
  =\sum_j\Lambda_j\ip{Y_j}{h}_{\cH}^2.
\]

\subsection{The Friedrichs form domain}
\label{subsec:form-closure}
The inclusion $C_c^\infty(I\times\Sphere)\subset\cE_n$ gives density in
$\mathcal X$, and $\|\varphi\|_{\mathcal X}\le\|\varphi\|_{\cE_n}$.
The integral formula and Cauchy--Schwarz show that $Q$ is symmetric and
$|Q(\varphi,\psi)|\le C_n\|\varphi\|_{\cE_n}\|\psi\|_{\cE_n}$.
We use the representation theorem for closed semibounded forms
\cite[Chap.~VI, Theorems~2.1 and~2.6]{Kato}.

{Define $U\varphi=\cos^n u\,\varphi$. Then $U$ maps
$\mathcal X$ isometrically onto}
\[
  \widetilde{\mathcal X}
  =\left\{y\colon I\times\Sphere\to\RR\text{ measurable}:
    \int_I\!\int_{\Sphere}|y|^2\,d\omega\,du<\infty\right\}
  =L^2(I;\cH).
\]
The radial Sobolev spaces are
\[
  \begin{aligned}
    H^1(I;\cH)
      &=\{y\in L^2(I;\cH):\partial_u y\in L^2(I;\cH)\},\\
    H_0^1(I;\cH)
      &=\{y\in H^1(I;\cH):y(-\pi/2)=y(\pi/2)=0\}.
  \end{aligned}
\]
The derivative is distributional, and the endpoint values are traces.
The scalar spaces $H^1(I)$ and $H_0^1(I)$ are obtained by replacing
$\cH$ with $\RR$. Define
\[
  \widetilde{\cE}_n
  =\left\{y\in H_0^1(I;\cH):
    \int_I\!\int_{\Sphere}\sec^2u\,|\gradSphere y|^2\,d\omega\,du
    <\infty\right\},
\]
with norm
\[
  \|y\|_{\widetilde{\cE}_n}^2
  =\|\partial_u y\|_{\widetilde{\mathcal X}}^2
  +\|\sec u\,\gradSphere y\|_{\widetilde{\mathcal X}}^2.
\]
Horizontal vector norms use $\gSphere$. The zero trace applies to
$y=U\varphi$, allowing the original speed $\varphi$ to be singular
at a pole. For example, central translation has speed $\tan u$, whose
conjugate $\sin u\cos^{n-1}u$ belongs to $H_0^1(I)$ for $n\ge2$.
Thus the endpoint condition is compatible with moving the poles;
Section~\ref{sec:singular} establishes finite energy for all $C^1$
surface fields.

\begin{proposition}[Friedrichs form domain]
  \label{lem:hardydensity}
  \label{prop:closedform}
  The space $\widetilde{\cE}_n$ is Hilbert in its displayed norm, and
  \[
    U\colon \cE_n\xrightarrow{\;\cong\;}\widetilde{\cE}_n,\qquad
    C_n^{-1}\norm\varphi_{\cE_n}
    \le\norm{U\varphi}_{\widetilde{\cE}_n}
    \le C_n\norm\varphi_{\cE_n}
    \quad(\varphi\in\cE_n),
  \]
  where $C_n\ge1$ depends only on $n$.
  For $y=U\varphi$ and $\varphi\in C_c^\infty(I\times S^{2n-1})$,
  \begin{equation}
    \label{eq:transformed-global-form}
    \begin{aligned}
      Q[\varphi]+n^2\norm y_{\widetilde{\mathcal X}}^2
      &=\norm{\partial_u y}_{\widetilde{\mathcal X}}^2+\norm{\sec u\,\gradSphere
      y}_{\widetilde{\mathcal X}}^2\\
      &\quad +(n-1)(n-2)\norm{\sec u\,y}_{\widetilde{\mathcal X}}^2.
    \end{aligned}
  \end{equation}

  The space $C_c^\infty(I\times S^{2n-1};\RR)$ is dense in $\cE_n$
  and is a form core for the closed, symmetric bilinear form $Q$.
  With $\langle\varphi,\psi\rangle_{\mathcal X}
  =\int\varphi\psi\,d\sigma_H$, its associated
  self-adjoint operator $L$ is the Friedrichs realization, with
  \[
    \operatorname{Dom}(L)
    =\left\{\psi\in\cE_n:
      \begin{array}{l}
        \text{there exists }g\in\mathcal X\text{ such that}\\
        Q(\varphi,\psi)=\ip{\varphi}{g}_{\mathcal X}
        \text{ for every }\varphi\in\cE_n
      \end{array}\right\},
    \qquad L\psi=g.
  \]
\end{proposition}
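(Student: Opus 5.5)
The plan has three parts: first verify \eqref{eq:transformed-global-form} on smooth compactly supported functions, then show that $U$ is a norm equivalence onto $\widetilde{\cE}_n$ by Hardy inequalities at the two endpoints, and finally prove density of $C_c^\infty$. Closedness and the Friedrichs description then follow from Kato's representation theorem.

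\emph{The identity.} Take $\varphi\in C_c^\infty(I\times\Sphere)$ and put $y=\cos^nu\,\varphi$. Then
\[
\cos^nu\,\partial_u\varphi=\partial_uy+n\tan u\,y,
\]
so
\[
\int\cos^{2n}u\,|\partial_u\varphi|^2=\int\bigl(|\partial_uy|^2+n^2\tan^2u\,y^2+n\tan u\,\partial_u(y^2)\bigr).
\]
Integrating the last term by parts (no boundary terms) gives $-n\sec^2u\,y^2$. Using $\tan^2u=\sec^2u-1$, the radial part becomes $|\partial_uy|^2+(n^2-n)\sec^2u\,y^2-n^2y^2$. The angular terms are $\sec^2u\,|\gradSphere y|^2-(2n-2)\sec^2u\,y^2$. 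Adding, the coefficient of $\sec^2u\,y^2$ is $n^2-3n+2=(n-1)(n-2)$, which gives \eqref{eq:transformed-global-form}. The same computation without the potential gives
\[
\|\varphi\|_{\cE_n}^2=\|\partial_uy\|^2+\|\sec u\,\gradSphere y\|^2+(n^2-n+1)\|\sec u\,y\|^2-n^2\|y\|^2 .
\]

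\emph{Norm equivalence.} The key tool is the sharp one-dimensional Hardy inequality for $H_0^1(I)$:
\[
\int_I\sec^2u\,y^2\,du\le C\int_I|\partial_uy|^2\,du,
\]
which follows from $\cos u\asymp\operatorname{dist}(u,\partial I)$ and the classical $\int y^2/x^2\le4\int|y'|^2$. Applied fiberwise in $\cH$, it shows that all terms on the right above are bounded by $C\|y\|_{\widetilde\cE_n}^2$; here $\|y\|^2\le\|\sec u\,y\|^2$ is trivial. Conversely, $\|y\|_{\widetilde\cE_n}^2\le C\|\varphi\|_{\cE_n}^2$, since $\partial_uy$ is controlled by $\cos^nu\,\partial_u\varphi$ together with $n\sin u\cos^{n-1}u\,\varphi$, whose $L^2$ norm is part of the $\cE_n$ norm.

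Both inequalities first hold on $C_c^\infty$. For general $\varphi\in\cE_n$, two facts are needed. First, $y=U\varphi$ has a vanishing trace: $\cos^{n-1}u\,\varphi\in L^2$ forces $y/\cos u\in L^2$, and hence $y\to0$ at the endpoints for $y\in H^1$. Second, the product rule holds distributionally, and $H^1\cap\{\sec u\,y\in L^2\}$ is exactly $H_0^1$. Completeness of $\widetilde{\cE}_n$ follows because it is an intersection of $H_0^1(I;\cH)$ with a weighted closed condition, and the weak gradient is closed.

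\emph{Density and the operator.} I expect this to be the main obstacle. I would show that $C_c^\infty$ is dense in $\widetilde\cE_n$ in two stages, as the section overview suggests.
\begin{enumerate}
\item Truncate angularly, using the spectral projections onto finitely many $Y_j$. These commute with $\partial_u$ and with $\sec u$, and converge in the $\sec u\,\gradSphere$ seminorm by dominated convergence applied to the sums $\sum_j\Lambda_j\langle Y_j,y(u)\rangle^2\sec^2u$.
\item For each coefficient $c_j\in H_0^1(I)$, approximate in $H_0^1(I)$ by $C_c^\infty(I)$. The weighted term $\Lambda_j\|\sec u\,c_j\|^2$ is then controlled by Hardy's inequality again.
\end{enumerate}
Given density, the norm equivalence makes $\cE_n$ the closure of $C_c^\infty$ in the $\cE_n$ norm. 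From \eqref{eq:transformed-global-form} and Hardy, $Q[\varphi]+n^2\|\varphi\|_{\mathcal X}^2\ge c\|\varphi\|_{\cE_n}^2$ on the core: drop the nonnegative $(n-1)(n-2)$ term and use $\|\partial_uy\|^2\ge c\|\sec u\,y\|^2$. The reverse bound is already known, so $Q$ is closed and bounded below by $-n^2$. Kato's representation theorem then yields the self-adjoint $L$ with the stated domain, and it is the Friedrichs extension because $C_c^\infty$ is a core.
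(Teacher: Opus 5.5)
Your proposal is correct and follows essentially the same route as the paper. Both arguments use the integration-by-parts identity for $y=\cos^n u\,\varphi$, the endpoint Hardy inequality on $H_0^1(I;\cH)$ for the norm equivalence and the zero traces, angular spectral truncation followed by radial $H_0^1$ approximation for density, and Kato's representation theorem. The one cosmetic difference is how $\|U^{-1}y\|_{\cE_n}$ is bounded: the paper uses the pointwise identity $\cos^n u\,\partial_u\varphi=\partial_u y+n\tan u\,y$, with no integration by parts, so it holds on all of $\widetilde{\cE}_n$; this is exactly the distributional product rule you invoke for general elements.
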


\begin{proof}
  For compactly supported $\varphi$, set $y=\cos^nu\,\varphi$.
  The product rule gives
  \[
    \partial_u\varphi=\cos^{-n}u\bigl(\partial_uy+n\tan u\,y\bigr),
    \qquad \gradSphere\varphi=\cos^{-n}u\,\gradSphere y.
  \]
  All norms and integrals in the next calculation use $du\,d\omega$.
  Since $y$ is compactly supported, its radial cross term is
  \[
    \begin{aligned}
      2n\int\tan u\,y\,\partial_uy
      &=n\int\tan u\,\partial_u|y|^2\\
      &=-n\int\sec^2u\,|y|^2=-n\|\sec u\,y\|_2^2.
    \end{aligned}
  \]
  Substituting in \eqref{eq:jacobi-form}, we obtain
  \[
    \begin{aligned}
      Q[\varphi]
      &=\|\partial_uy+n\tan u\,y\|_2^2
        +\|\sec u\,\gradSphere y\|_2^2-(2n-2)\|\sec u\,y\|_2^2\\
      &=\|\partial_uy\|_2^2+\|\sec u\,\gradSphere y\|_2^2
        +n^2\|\tan u\,y\|_2^2-(3n-2)\|\sec u\,y\|_2^2\\
      &=\|\partial_uy\|_2^2+\|\sec u\,\gradSphere y\|_2^2
        +(n-1)(n-2)\|\sec u\,y\|_2^2-n^2\|y\|_2^2.
    \end{aligned}
  \]
  Here $\tan^2u=\sec^2u-1$ and
  $n^2-(3n-2)=(n-1)(n-2)$ give \eqref{eq:transformed-global-form}.

  Poincar\'e's inequality makes a Cauchy sequence in $\widetilde{\cE}_n$
  converge in $H_0^1(I;\cH)$; closedness of the weak CR gradient identifies
  the $L^2$ limit of its weighted angular gradients as
  $\sec u\,\gradSphere y$. Thus $\widetilde{\cE}_n$ is complete.
  Put $d(u)=\pi/2-|u|$. Integration by parts on each half of $I$
  and Cauchy--Schwarz give the scalar endpoint Hardy inequality
  \[
    \int_I\frac{|g(u)|^2}{d(u)^2}\,du
    \le4\int_I|g'(u)|^2\,du,
    \qquad g\in C_c^\infty(I).
  \]
  Apply this inequality to the coordinates of $y$ in an orthonormal
  basis of $\cH$, sum by Parseval and pass to the $H_0^1$ closure.
  The resulting Hilbert-valued inequality and
  $\cos u\ge(2/\pi)(\pi/2-|u|)$ give
  \begin{equation}
    \label{eq:h0-endpoint-hardy}
    \|\sec u\,y\|_2^2\le C\|\partial_u y\|_2^2,
    \qquad y\in H_0^1(I;\cH).
  \end{equation}
  For $y=U\varphi$, direct substitution gives
  \[
    \|\varphi\|_{\cE_n}^2
    =\|\partial_u y+n\tan u\,y\|_2^2
    +\|\sec u\,\gradSphere y\|_2^2+\|\sec u\,y\|_2^2.
  \]
  Hardy's inequality gives $\|U^{-1}y\|_{\cE_n}\le C_n\|y\|_{\widetilde{\cE}_n}$.
  Conversely, the displayed norm identity controls
  $\partial_uy+n\tan u\,y$, $\sec u\,y$ and
  $\sec u\,\gradSphere y$. In particular,
  \[
    \begin{aligned}
      \|\partial_uy\|_2^2
      &\le2\|\partial_uy+n\tan u\,y\|_2^2
        +2n^2\|\tan u\,y\|_2^2\\
      &\le2\|\partial_uy+n\tan u\,y\|_2^2
        +2n^2\|\sec u\,y\|_2^2
       \le C_n\|\varphi\|_{\cE_n}^2.
    \end{aligned}
  \]
  Hence $y\in H^1(I;\cH)\subset C(\overline I;\cH)$.
  A nonzero endpoint trace would contradict $\sec u\,y\in L^2$.
  Thus $y\in H_0^1(I;\cH)$ and the reverse norm bound follows.

  Let $y=U\varphi\in\widetilde{\cE}_n$. Finite angular spectral projections
  $P_Ny$ converge to $y$ in $\widetilde{\cE}_n$ by Parseval. Each $P_Ny$
  has finitely many {radial components} $y_j\in H_0^1(I)$.
  Choose $h_j\in C_c^\infty(I)$ converging to $y_j$ in $H_0^1$.
  Hardy's inequality gives
  $\|\sec u\,(h_j-y_j)\|_2\le C\|(h_j-y_j)'\|_2$,
  so the finite weighted angular term also converges.
  A diagonal choice and the norm equivalence just proved give density in
  $\cE_n$.

  Density extends \eqref{eq:transformed-global-form} to $\cE_n$.
  Hardy's inequality and the same norm equivalence then give, including when $n=2$,
  \begin{equation}
    \label{eq:shifted-norm}
    C_n^{-1}\|\varphi\|_{\cE_n}^2\le
    Q[\varphi]+n^2\|\varphi\|_{\mathcal X}^2
    \le C_n\|\varphi\|_{\cE_n}^2.
  \end{equation}
  Completeness proves closedness. The representation theorem and the
  form core identify $L$ as the Friedrichs realization.
\end{proof}

\section{Global second variation and dilation pairing}
\label{sec:singular}

\subsection{Pole estimates and energy cutoffs}
\label{sec:pansu-hessian}
Fix $0<r_0<1$. In the two pole charts $B_{r_0}\subset\RR^{2n}$,
use $t=\pm h_{\mathrm P}(r)$, $r=|z|$, with the profile from
Section~\ref{subsec:pansu-sphere}; thus $h_{\mathrm P}'(r)=-r^2/\sqrt{1-r^2}$.
Let $G_\pm$ be the horizontal cofactors for the outward orientation,
so the perimeter density in each chart is $|G_\pm|\,dz$.
In the next proposition, the $C^1$ norm is taken on the fixed surface
charts, and $C$ depends only on $n,r_0$ and these charts, not on the
field or the cap radius.

\begin{proposition}
  \label{prop:poleexpansion}
  {At both Pansu poles, $D_zG_\pm(0)$ is invertible.}
  For $n\ge2$, the normalized normal speed $\varphi_W$ of every $C^1$
  surface field $W$ belongs to $\cE_n$ and satisfies, in either cap,
  \begin{equation}
    \label{eq:pole-energy-bound}
    \int_{r<\varepsilon}
    \left(|\nabla_b \varphi_W|^2+q_{\PSphere}\varphi_W^2\right)d\sigma_H
    \le C\varepsilon^{2n-3}\|W\|_{C^1}^2
    \qquad(0<\varepsilon<r_0).
  \end{equation}
\end{proposition}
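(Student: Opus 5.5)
First compute the pole cofactors explicitly. In the cap $t=h_{\mathrm P}(r)$ we have $\partial_{x_j}f=h_{\mathrm P}'(r)x_j/r=-r x_j/\sqrt{1-r^2}$, and similarly for $y_j$. Hence, in complex notation, $G$ is built from $(-r/\sqrt{1-r^2})z$ together with the $J$-rotation terms $(-y_j,x_j)$. After fixing the outward sign this gives
\[
  G_\pm(z)=\Bigl(\tfrac{r}{\sqrt{1-r^2}}\Id\mp J\Bigr)z ,
\]
which is \eqref{eq:exact-pole-cofactors}, quoted earlier in Section~\ref{gen:app:rank-proof}. Since $r/\sqrt{1-r^2}=O(r)$, we get $D_zG_\pm(0)=\mp J$, and this is invertible because $J^2=-\Id$. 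More precisely, $|G_\pm(z)|=r\sqrt{1+r^2/(1-r^2)}=r/\sqrt{1-r^2}$, so $g\asymp r$ on $B_{r_0}$. This proves the first claim. It also gives the isolated-pole case of Proposition~\ref{gen:prop:characteristic-energy}: $|\varphi_W|\le Cr^{-1}\|W\|_{C^1}$ and $|\nabla_b\varphi_W|\le Cr^{-2}\|W\|_{C^1}$ in each cap.

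Next I prove the cap energy bound \eqref{eq:pole-energy-bound} directly. The rank hypothesis $r_p\ge4$ of Proposition~\ref{gen:prop:characteristic-energy} fails for $n=2,3$, but invertibility lets us avoid it. On the Pansu sphere, $q_{\PSphere}=(2n-2)\sec^2u$ with $\cos u=|z|=r$, so $q_{\PSphere}=(2n-2)r^{-2}$. Also $d\sigma_H=g\,dz\le Cr\,dz$. The integrand is therefore bounded by $C\|W\|_{C^1}^2(r^{-4}+r^{-4})\,r$, and
\[
  \int_{r<\varepsilon}C\|W\|_{C^1}^2r^{-3}\,dz
  = C\|W\|_{C^1}^2\int_0^\varepsilon \rho^{-3}\rho^{2n-1}\,d\rho
  = C\varepsilon^{2n-3}\|W\|_{C^1}^2 .
\]
This converges because $2n-3\ge1$ when $n\ge2$. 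The gradient bound $|\nabla_b\varphi_W|\le Cg^{-2}\|W\|_{C^1}$ is taken from the proof of Proposition~\ref{gen:prop:characteristic-energy}: write $\varphi_W=\sigma A_W/g$ with $|A_W|+|D_zA_W|\le C\|W\|_{C^1}$ and $|D_zg|\le C$. This estimate is valid for any rank.

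Finally I show $\varphi_W\in\cE_n$. Away from the two caps, $\PSphere$ is a compact regular $C^2$ piece, and $\varphi_W$ is $C^1$ there with bounded coefficients. On the caps, the norm \eqref{eq:energy-norm} pulled back by $\Psi$ is comparable to $\int(|\nabla_b\varphi|^2+(1+\sec^2u)\varphi^2)\,d\sigma_H$. This uses the gradient identity $|\nabla_b^{\PSphere}h|^2\circ\Psi=|\partial_u\psi|^2+\sec^2u|\gradSphere\psi|^2$ and $d\sigma_H=\cos^{2n}u\,du\,d\omega$, together with $\cos^{2n-2}u=\sec^2u\cdot\cos^{2n}u$. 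All of these terms are controlled by the cap bound with $\varepsilon=r_0$.

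The main subtle step is that membership in $\cE_n$ requires distributional derivatives, whereas so far we only have a pointwise $C^1$ function on $I\times\Sphere$ with a finite weighted integral. For this I use local regularity on the open set $I\times\Sphere$: a $C^1$ function there has classical derivatives equal to its weak derivatives, and $\cE_n$ is defined with weak derivatives on $I\times\Sphere$ and imposes no condition at the poles. So finiteness of \eqref{eq:energy-norm} is all that is needed. The endpoint zero trace of $U\varphi_W$ then follows from Proposition~\ref{lem:hardydensity}.
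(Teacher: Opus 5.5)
Your argument is correct and follows the paper's route. The steps match one for one: the explicit cofactors $G_\pm=(r/\sqrt{1-r^2}\,\Id\mp J)z$, the invertibility $D_zG_\pm(0)=\mp J$ with $g\asymp r$, the pointwise bounds $|\varphi_W|\lesssim r^{-1}$ and $|\nabla_b\varphi_W|\lesssim r^{-2}$, integration of $r^{-3}\,dz$ over the cap, and the identity $q_{\PSphere}/(2n-2)=\sec^2u$ that identifies the cap energy with the $\cE_n$ norm.

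One correction: the hypothesis $r_p\ge4$ of Proposition~\ref{gen:prop:characteristic-energy} does not fail for $n=2,3$. Invertibility gives $r_p=2n\ge4$, so the paper simply cites that proposition, and your direct computation is exactly its $\gamma=3$ step. You have confused $r_p$ with the automatic lower bound $\operatorname{rank}D_zG\ge n$.
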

\begin{proof}
  {Since $h_{\mathrm P}'(r)=-r^2/\sqrt{1-r^2}$, radial
  differentiation gives the outward horizontal cofactors}
  \begin{equation}
    \label{eq:exact-pole-cofactors}
    {G_\pm=
    \left(\frac r{\sqrt{1-r^2}}\Id\mp J\right)z.}
  \end{equation}
  Using $\ip z{Jz}=0$, we obtain
  \begin{equation}
    \label{eq:exact-pole-cofactor-norm}
    |G_\pm(z)|=\frac r{\sqrt{1-r^2}},\qquad D_zG_\pm(0)=\mp J.
  \end{equation}
  Taylor expansion and \eqref{eq:pansu-jacobi-potential} give
  \begin{equation}
    \label{eq:canonical-pole-orders}
    \begin{gathered}
      \pm h_{\mathrm P}(r)=\pm\frac\pi4\mp\frac13r^3+O(r^5),
      \qquad |G_\pm|=r+O(r^3),\\
      d\sigma_H=(r^{2n}+O(r^{2n+2}))\,dr\,d\omega,
      \qquad q_{\PSphere}=\frac{2n-2}{r^2}.
    \end{gathered}
  \end{equation}
  {Differentiating the radial expansion twice shows that its Hessian is
  Lipschitz at $z=0$, so the caps are $C^{2,1}$ there. The nonzero
  $|z|^3$ term prevents $C^3$ regularity.}
  {At each pole, $D_zG_\pm(0)=\mp J$ is invertible, so
  Proposition~\ref{gen:prop:characteristic-energy} applies. Since
  $q_{\PSphere}=(2n-2)/r^2\ge0$, its cap estimate
  \eqref{gen:eq:characteristic-energy-cap} gives
  \eqref{eq:pole-energy-bound}. Its full-rank bounds
  \eqref{gen:eq:characteristic-energy-pointwise} also give
  $|\varphi_W|\le Cr^{-1}\|W\|_{C^1}$ and
  $|\nabla_b\varphi_W|\le Cr^{-2}\|W\|_{C^1}$, as used in the
  cutoff estimate below. Away from the poles, $\varphi_W$ is $C^1$.
  Finally, $q_{\PSphere}/(2n-2)=\cos^{-2}u$, and the metric and
  measure in \eqref{eq:pansu-pullback} identify the cap estimate with
  the energy terms in \eqref{eq:energy-norm}. Thus
  $\varphi_W\in\cE_n$.}
\end{proof}

The same pole orders give the energy cutoff estimate in a general
power form. If $|\phi|\le Mr^\ell$ and
$|\nabla_b \phi|\le Mr^{\ell-1}$ in a pole cap, with
$2n+2\ell-1>0$, then, for $0<2\varepsilon<r_0$, a radial cutoff $\chi_\varepsilon$ equal
to
$0$ for $r\le\varepsilon$ and $1$ for $r\ge2\varepsilon$, with
$|D\chi_\varepsilon|\le C/\varepsilon$, satisfies
\begin{equation}
  \label{eq:pole-power-estimate}
  \int_{r<\varepsilon}(|\nabla_b \phi|^2+q_{\PSphere}|\phi|^2)\,d\sigma_H
  +\|(1-\chi_\varepsilon)\phi\|_{\cE_n}^2
  \le CM^2\varepsilon^{2n+2\ell-1}.
\end{equation}
Indeed,
$\nabla_b((1-\chi_\varepsilon)\phi)
=(1-\chi_\varepsilon)\nabla_b\phi-\phi\nabla_b\chi_\varepsilon$.
The squared triangle inequality and the pole measure therefore give
\[
  \begin{aligned}
    \|(1-\chi_\varepsilon)\phi\|_{\cE_n}^2
    &\le CM^2\left(\int_0^{2\varepsilon}r^{2n+2\ell-2}\,dr
       +\varepsilon^{-2}\int_\varepsilon^{2\varepsilon}
            r^{2n+2\ell}\,dr\right)\\
    &\le CM^2\left(\frac{(2\varepsilon)^{2n+2\ell-1}}{2n+2\ell-1}
       +\varepsilon^{-2}
        \frac{(2\varepsilon)^{2n+2\ell+1}}{2n+2\ell+1}\right)\\
    &\le CM^2\varepsilon^{2n+2\ell-1}.
  \end{aligned}
\]
The uncut cap integral is bounded by the first radial integral above. In the energy norm, $(1-\chi_\varepsilon)\phi$ is
extended by zero outside the pole chart.

\begin{proof}[Proof of \cref{cor:intro-hessian}]
  {The sphere is a compact $C^2$ boundary. Its characteristic
  set consists of the two poles, and
  \eqref{eq:exact-pole-cofactor-norm} gives
  $\operatorname{rank}D_zG_\pm(0)=2n\ge4$.
  Corollary~\ref{prop:geometry} gives $H_{\PSphere}=2n$ on the regular set.
  Theorem~\ref{thm:c2-global-hessian}, with $h=2n$, gives
  stationarity and the continuous Hessian.
  Proposition~\ref{gen:prop:rank-global-jacobi} gives its global pairing with
  $Q_{\PSphere}$ through \eqref{eq:abstract-global-hessian} and the
  one-parameter second-derivative identity.
  Proposition~\ref{prop:poleexpansion} gives $\varphi_W\in\cE_n$.
  By \eqref{eq:jacobi-form} and \eqref{eq:energy-norm}, the general
  energy in \eqref{gen:eq:abstract-energy-cutoff} controls the
  $\cE_n$ norm at the poles. Hence the pole cutoffs converge in
  $\cE_n$, and the improper pairing $Q_{\PSphere}$ equals the closed
  form $Q$ on these speeds. This proves the corollary.}
\end{proof}

\subsection{The dilation pairing and volume constraint}
{For a surface field $W$, the volume constraint is
$\delta V[W]=\int_\PSphere\varphi_W\,d\sigma_H=0$.
We express it through the Jacobi pairing with the dilation field.}
Let $D=\sum_j(x_j\partial_{x_j}+y_j\partial_{y_j})
+2t\partial_t$, the generator of $\delta_{e^\tau}$. On the unit Pansu sphere,
\begin{equation}
  \label{eq:dilation-support-speed}
  \varphi_D=\cos^2u+(u+\sin u\cos u)\tan u
  =1+u\tan u.
\end{equation}
The bounds $\varphi_D=O(\cos^{-1}u)$ and
$\partial_u\varphi_D=O(\cos^{-2}u)$ give $\varphi_D\in\cE_n$ for $n\ge2$.
For $f=\varphi_D=1+u\tan u$, the derivatives and radial operator give
\[
  f'=\tan u+u\sec^2u,\qquad
  f''=2\sec^2u+2u\sec^2u\tan u,
\]
\[
  \begin{aligned}
    Lf&=-f''+2n\tan u\,f'-(2n-2)\sec^2u\,f\\
      &=-2\sec^2u-2u\sec^2u\tan u
        +2n\tan^2u+2nu\tan u\sec^2u\\
      &\quad-(2n-2)\sec^2u-(2n-2)u\sec^2u\tan u\\
      &=-2n(\sec^2u-\tan^2u)=-2n.
  \end{aligned}
\]
Integration by parts against compact regular tests therefore gives
$Q(\varphi_D,\varphi)=\int(L\varphi_D)\varphi\,d\sigma_H$.
Density and continuity of the closed form extend this to
\begin{equation}
  \label{eq:model-dilation-pairing}
  Q(\varphi_D,\varphi)=-2n\int_\PSphere\varphi\,d\sigma_H
  \quad(\varphi\in\cE_n),\qquad Q[\varphi_D]<0.
\end{equation}
Thus the volume constraint is precisely $Q$-orthogonality to this negative direction.

\section{Complete real spectrum, translation kernel, and stability}
\label{sec:spectrum}
\label{subsec:angular-decomposition}
\label{subsec:kernel-geometry}
We solve the separated equations in the finite-energy domain.
{For $\varrho>0$, let $C_k^{(\varrho)}$ be the degree-$k$ Gegenbauer
polynomial with the standard normalization
$C_k^{(\varrho)}(1)=(2\varrho)_k/k!$.}

\begin{proposition}[Global eigenbasis]
  \label{prop:global-eigenbasis}
  Choose a real orthonormal basis $Y_{d,m,\ell}$ of $\cH_{d,m}$,
  $1\le\ell\le M_{d,m}$, for each $(d,m)\in\mathcal I$. The functions
  \begin{equation}
    \label{eq:global-eigenfunctions}
    \varphi_{d,m,k,\ell}
    =\cos^{\varrho_{d,m}-n}u\,
    C_k^{(\varrho_{d,m})}(\sin u)Y_{d,m,\ell},
    \qquad (d,m)\in\mathcal I,\ k\in\NN_0,
  \end{equation}
  form a complete orthogonal basis of $\mathcal X$ and belong to
  $\operatorname{Dom}(L)$, with eigenvalues
  $\mu_{d,m,k}=(\varrho_{d,m}+k)^2-n^2$.
\end{proposition}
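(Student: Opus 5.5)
The plan is to separate variables with Proposition~\ref{thm:angular}, reduce each angular sector to a scalar P\"oschl--Teller problem through the conjugation $U$ of Proposition~\ref{prop:closedform}, solve that problem with Gegenbauer polynomials, and then prove completeness and membership in $\operatorname{Dom}(L)$.

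For $y=U\varphi$ with $\varphi=\cos^{-n}u\,y(u)\,Y$ and $Y\in\cH_{d,m}$, identity \eqref{eq:transformed-global-form} gives the sector form
\[
  \int_I\bigl(|y'|^2+\bigl((n-1)(n-2)+\Lambda_{d,m}\bigr)\sec^2u\,|y|^2-n^2|y|^2\bigr)\,du .
\]
Its operator is $-\partial_u^2+\nu\sec^2u-n^2$ on $L^2(I)$, where $\nu=(n-1)(n-2)+\Lambda_{d,m}=\varrho(\varrho-1)$ and $\varrho=\varrho_{d,m}\ge1$. For $n\ge2$ we have $\nu\ge0$, and $\nu=0$ exactly when $n=2$ and $(d,m)=(0,0)$.

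For the P\"oschl--Teller operator $-\partial_u^2+\varrho(\varrho-1)\sec^2u$, I would check directly that $y_k=\cos^\varrho u\,C_k^{(\varrho)}(\sin u)$ satisfies the eigenvalue equation with eigenvalue $(\varrho+k)^2$. With $x=\sin u$, the substitution converts the equation into the Gegenbauer equation
\[
  (1-x^2)C''-(2\varrho+1)xC'+k(k+2\varrho)C=0 .
\]
This gives $\mu_{d,m,k}=(\varrho+k)^2-n^2$, and undoing $U$ produces \eqref{eq:global-eigenfunctions}.

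Next, these functions belong to $\operatorname{Dom}(L)$. Since $y_k\sim\cos^\varrho u$ with $\varrho\ge1$, we get $y_k\in H_0^1(I)$, and $\sec u\,y_k\in L^2$ together with finite angular energy gives $y_k\otimes Y\in\widetilde\cE_n$. Hence $\varphi\in\cE_n$ by Proposition~\ref{prop:closedform}. For the operator identity, integrating by parts against $C_c^\infty$ tests gives $Q(\psi,\varphi)=\langle\psi,\mu\varphi\rangle_{\mathcal X}$ on the form core. This extends to all $\psi\in\cE_n$ by density and the continuity bound, and the characterization of $\operatorname{Dom}(L)$ in Proposition~\ref{prop:closedform} then applies.

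For orthogonality and completeness, $\mathcal X\cong L^2(I)\otimes\cH$ via $U$, and $\cH$ decomposes orthogonally by Proposition~\ref{thm:angular}. It therefore suffices that, for fixed $\varrho\ge1$, the family $\{\cos^\varrho u\,C_k^{(\varrho)}(\sin u)\}_k$ is a complete orthogonal system in $L^2(I,du)$. Substituting $x=\sin u$ turns $L^2(I,du)$ into $L^2((-1,1),(1-x^2)^{-1/2}dx)$ and the functions into $(1-x^2)^{\varrho/2}C_k^{(\varrho)}(x)$. Multiplication by $(1-x^2)^{\varrho/2}$ is then a unitary map onto $L^2((-1,1),(1-x^2)^{\varrho-1/2}dx)$, which is exactly the Gegenbauer weight. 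Orthogonality and completeness follow because polynomials are dense in $L^2$ of a finite measure on a compact interval, by Weierstrass approximation. Assembling the sectors gives a complete orthogonal eigenbasis of $\mathcal X$.

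The main obstacle is domain bookkeeping rather than the computation. One must show that each eigenfunction lies in the Friedrichs form domain with the correct endpoint behaviour, and must not pick the non-Friedrichs solution $\cos^{1-\varrho}u$. This is automatic here: that solution fails $\sec u\,y\in L^2$ (or $y\in L^2$) whenever $\varrho\ge1$. In the borderline case $\nu=0$, where $\varrho=1$, the zero trace condition of $H_0^1$ selects the Dirichlet solutions $\cos u\,C_k^{(1)}(\sin u)$, so the chosen family is still the correct one.
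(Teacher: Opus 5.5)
Your proposal is correct and follows essentially the same route as the paper's proof in Appendix~\ref{app:global-eigenbasis}. There, $y_k\in H_0^1(I)$ together with Hardy's inequality gives $\varphi_{d,m,k,\ell}\in\cE_n$, and the eigen-identity on $C_c^\infty$ tests extends by density, so the representation theorem places each function in $\operatorname{Dom}(L)$. Completeness then comes from the sectorwise isometry $h\mapsto\cos^{\varrho}u\,h(\sin u)$ from the Gegenbauer-weighted space onto $L^2(I)$, combined with Proposition~\ref{thm:angular}.

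One small slip: the unitary map from $L^2((-1,1),(1-x^2)^{-1/2}dx)$ onto the Gegenbauer-weighted space is division by $(1-x^2)^{\varrho/2}$, not multiplication. Also, Weierstrass density supplies only completeness; orthogonality is the classical Gegenbauer orthogonality, or follows from the distinct eigenvalues within each sector.
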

{The reduction to the Gegenbauer equation is given below;
the domain and completeness arguments are in
Appendix~\ref{app:global-eigenbasis}.}

Let $Y$ be a smooth angular eigenfunction with
$-\DeltaSphere Y=\Lambda Y$ and $\|Y\|_{\cH}=1$.
For $\varphi=fY$, the energy norm is
\[
  \|fY\|_{\cE_n}^2
  =\int_I\left[\cos^{2n}u|f'|^2
  +(1+\Lambda)\cos^{2n-2}u|f|^2\right]\dd u.
\]
By Proposition~\ref{lem:hardydensity} and Hardy's inequality,
$fY\in\cE_n$ if and only if $y=\cos^n u\,f\in H_0^1(I)$.
The separated Jacobi expression acts on $f$ as
\[
  \begin{aligned}
    L_\Lambda f
    &=-\cos^{-2n}u\,(\cos^{2n}u\,f')'
      +(\Lambda-2n+2)\sec^2u\,f\\
    &=-f''+2n\tan u\,f'+(\Lambda-2n+2)\sec^2u\,f.
  \end{aligned}
\]
With $f=\cos^{-n}u\,y$, the derivatives are
\[
  \begin{aligned}
    f'&=\cos^{-n}u\,(y'+n\tan u\,y),\\
    f''&=\cos^{-n}u\bigl(y''+2n\tan u\,y'
                   +(n\sec^2u+n^2\tan^2u)y\bigr).
  \end{aligned}
\]
Substituting and cancelling $y'$ gives
\[
  \begin{aligned}
    \cos^nu\,L_\Lambda(\cos^{-n}u\,y)
    &=-y''+\bigl(n^2\tan^2u+(\Lambda-3n+2)\sec^2u\bigr)y\\
    &=-y''+\bigl((n-1)(n-2)+\Lambda\bigr)\sec^2u\,y-n^2y.
  \end{aligned}
\]
Equivalently, the conjugated expression is
\begin{equation}
  \label{eq:poschl-teller}
  -y''+\varrho(\Lambda)(\varrho(\Lambda)-1)\sec^2u\,y-n^2y,
  \qquad
  \varrho(\Lambda)(\varrho(\Lambda)-1)=(n-1)(n-2)+\Lambda.
\end{equation}
Here $\varrho(\Lambda)\ge1$. When $n=2$ and $\Lambda=0$, the
expression is $-y''-4y$ with zero endpoint traces.

Gegenbauer polynomials $C_k^{(\varrho)}$, $\varrho\ge1$, form an
orthogonal basis of \mbox{$L^2((-1,1),(1-x^2)^{\varrho-1/2}\dd x)$}
\cite[Theorem~3.1.5 and \S4.7]{Szego}. They satisfy
\[
  (1-x^2)(C_k^{(\varrho)})''-(2\varrho+1)x(C_k^{(\varrho)})'
  +k(k+2\varrho)C_k^{(\varrho)}=0.
\]
To see the reduction explicitly, put $\varrho=\varrho(\Lambda)$,
$x=\sin u$, $c=\cos u$ and $y=c^\varrho v(x)$; primes on $v$ denote
$x$-derivatives. Then
\[
  \begin{aligned}
    y'&=c^{\varrho-1}\bigl(-\varrho x v+c^2v'\bigr),\\
    y''&=c^{\varrho-2}\bigl(c^4v''-(2\varrho+1)xc^2v'
            +(\varrho(\varrho-1)x^2-\varrho c^2)v\bigr).
  \end{aligned}
\]
Thus the expression in \eqref{eq:poschl-teller} becomes
\[
  \begin{aligned}
    -y''+\varrho(\varrho-1)c^{-2}y-n^2y
    &=c^\varrho\bigl(-(1-x^2)v''+(2\varrho+1)xv'
           +(\varrho^2-n^2)v\bigr).
  \end{aligned}
\]
The eigenvalue equation is consequently
\[
  (1-x^2)v''-(2\varrho+1)xv'
       +(\mu+n^2-\varrho^2)v=0.
\]
For $v=C_k^{(\varrho)}$, comparison with its differential equation gives
\[
  \mu+n^2-\varrho^2=k(k+2\varrho),\qquad
  \mu=\varrho^2+2k\varrho+k^2-n^2=(\varrho+k)^2-n^2.
\]

\begin{proof}[{Proof of Theorem~\ref{thm:main-spectrum}}]
  By Proposition~\ref{prop:global-eigenbasis}, each eigenvalue
  $\mu$ has multiplicity
  \begin{equation}
    \label{eq:multiplicity}
    \sum_{\substack{(d,m)\in\mathcal I,\ k\in\NN_0\\
                    \mu_{d,m,k}=\mu}}M_{d,m}.
  \end{equation}
  To prove finiteness, fix $R\ge0$.
  If $\mu_{d,m,k}\le R$, then
  \[
    k\le\sqrt{R+n^2},\qquad \varrho_{d,m}\le\sqrt{R+n^2},
  \]
  \[
    2(n-1)d\le\Lambda_{d,m}
    =\varrho_{d,m}(\varrho_{d,m}-1)-(n-1)(n-2)
    \le R+n^2.
  \]
  Thus $\#\{(d,m,k):\mu_{d,m,k}\le R\}<\infty$.
  The eigenvalues tend to $+\infty$, proving \eqref{eq:main-spectrum}.
  On the normalized eigenbasis, $(L+n^2)^{-1}$ is diagonal with entries
  $(\mu_{d,m,k}+n^2)^{-1}\to0$. Its finite spectral truncations converge
  in operator norm, so the resolvent is compact.

  In the constant angular sector, $\varrho(0)=n-1$ gives eigenvalues
  $-(2n-1),0,2n+1$ at $k=0,1,2$, with radial factors $\sec u,\tan u$
  at $k=0,1$. Proposition~\ref{thm:angular} and monotonicity give
  $\varrho(2n-2)=n$ and $\varrho(\Lambda)>n$ for $\Lambda>2n-2$.
  Thus $\varrho(\Lambda)+k=n$ only at $(\Lambda,k)=(0,1),(2n-2,0)$.
  The latter has constant radial factor and angular space
  $\cH_{1,1}$ of real dimension $2n$.
  The constant sector at $k=0$ gives the sole negative eigenvalue,
  $-(2n-1)$, with multiplicity $1$. The eigenbasis therefore gives the
  optimal form bound
  \[
    Q[\varphi]\ge-(2n-1)\|\varphi\|_{\mathcal X}^2,
    \qquad\text{with equality exactly for }\varphi=c\sec u,
    \quad c\in\RR.
  \]

  The real eigenbasis gives index one and the kernel
  \[
    \begin{aligned}
      \cK_n=\ker L
      &=\{\varphi\in\operatorname{Dom}(L):L\varphi=0\}\\
      &=\{(u,\zeta)\mapsto c\tan u
         +\langle v,\zeta\rangle_{\RR^{2n}}:
        c\in\RR,\ v\in\RR^{2n}\},
      \qquad \dim_\RR\cK_n=2n+1.
    \end{aligned}
  \]

  For $v\in\RR^{2n}$, let $a\in\CC^n$ be its coordinate
  vector, and let $a_b$ be the left-invariant horizontal field
  {whose value at the identity is $a$}. Differentiating
  left translation in direction $a$
  {gives $K_a=a_b-2\Im\ip zaT$. Set $A=\ip\zeta a$.} Substituting
  $z=\cos u\,e^{-iu}\zeta$, $e_{2n}=-e^{-2iu}\zeta$, and
  $\alpha=\tan u$ into the normalized normal speed gives
  \[
    \begin{aligned}
      \frac{\ip{K_a+cT}{N}}{\abs{N_b}}
      &=\Re(e^{-2iu}A)+2\sin u\,\Im(e^{iu}\overline A)+c\tan u\\
      &=\cos(2u)\Re A+\sin(2u)\Im A
         +2\sin u\bigl(\sin u\Re A-\cos u\Im A\bigr)+c\tan u\\
      &=(\cos(2u)+2\sin^2u)\Re A+c\tan u\\
      &=\Re A+c\tan u
       =\langle v,\zeta\rangle_{\RR^{2n}}+c\tan u.
    \end{aligned}
  \]
  As $a$ and $c$ vary, these speeds exhaust $\cK_n$.
\end{proof}

\begin{proof}[Proof of \cref{cor:intro-stability}]
  {On the unit Pansu sphere $\PSphere$,
  Theorem~\ref{thm:main-spectrum}
  and Corollary~\ref{cor:intro-hessian} give
  $\operatorname{ind}\mathcal B_{2n}\le1$.}
  For the dilation field $D$, \eqref{eq:model-dilation-pairing} gives
  \[
    Q[\varphi_D]<0,\qquad
    Q(\varphi_D,\varphi)=-2n\int_\PSphere\varphi\,d\sigma_H.
  \]
  Thus $\operatorname{ind}\mathcal B_{2n}=1$. Put $f=\varphi_D$,
  $\ell(\varphi)=\int_\PSphere\varphi\,d\sigma_H$, and
  $M=\ker\ell\subset\cE_n$. The index-one property implies
  $Q|_M\ge0$, since a negative vector in $M$ is $Q$-orthogonal to
  $f$ and would give a negative-definite plane. If $\varphi\in M$
  and $Q[\varphi]=0$, then $Q[\varphi+t\psi]\ge0$ for all
  $\psi\in M$ and $t\in\RR$ forces $Q(\varphi,\psi)=0$.
  Also $Q(\varphi,f)=0$ and $\ell(f)>0$, so
  $\cE_n=M\oplus\spn\{f\}$ and $\varphi$ is $Q$-orthogonal to
  the entire form domain. The representation theorem gives
  $\varphi\in\operatorname{Dom}(L)$ and $L\varphi=0$.
  Conversely, kernel elements are translation speeds with zero mean.
  The global Hessian identity therefore gives exactly the stated
  zero directions for $C^1$ surface fields. Stationarity identifies
  this Hessian with the second perimeter derivative along
  volume-preserving variations. Translation and Heisenberg dilation
  give the conclusion for every Pansu sphere.
\end{proof}

\appendix
\section{\texorpdfstring{{Real angular decomposition}}{Real angular decomposition}}
\label{app:angular-decomposition}

{We recall the classical complex spherical-harmonic
decomposition~\cite{Folland} and the dimension and eigenvalue formulas
of \cite[Corollary~2.6 and Section~4]{CowlingKlimaSikora} in the real
Hopf-frequency notation used here. This makes explicit the pairing of
opposite nonzero frequencies and the count at zero frequency.}

{\begin{proof}[Proof of Proposition~\ref{thm:angular}]
  Real spherical harmonics of each homogeneous degree $d$ form a
  complete orthogonal decomposition of $\cH$. The Hopf circle action
  preserves degree and harmonicity. Its real Fourier decomposition on
  each degree space is orthogonal, and a degree-$d$ monomial
  $z^\alpha\bar z^\beta$ has frequency
  $|\alpha|-|\beta|$. Hence the possible absolute frequencies are
  exactly $0\le m\le d$ with $d-m$ even, giving the stated real
  decomposition.

  The real polynomial space of degree $d$ and absolute Hopf frequency
  $m$ has dimension
  $(2-\delta_{m0})\binom{n+a-1}{a}\binom{n+b-1}{b}$:
  positive frequency has cosine and sine components, while zero
  frequency is counted once. The Fischer decomposition into harmonic
  polynomials and $|z|^2$ times degree-$(d-2)$ polynomials preserves
  Hopf frequency. Subtracting the latter dimension, with indices
  $a-1,b-1$, gives $M_{d,m}$.

  On $\cH_{d,m}$, the round Laplacian is
  $-\Delta^{S^{2n-1}}=d(d+2n-2)$ and
  $-\TSphere^2=m^2$. The orthogonal splitting
  $T\Sphere=\CRD\oplus\RR\TSphere$ gives
  $\Delta^{S^{2n-1}}=\DeltaSphere+\TSphere^2$, hence
  $-\DeltaSphere=\Lambda_{d,m}$. Equivalently,
  $d=a+b$ and $m=a-b$ give
  $\Lambda_{d,m}=4ab+2(n-1)(a+b)$. Finally,
  $\Lambda_{d,m}\ge2(n-1)d\ge2n-2$ for $d\ge1$,
  with equality only for $(d,m)=(1,1)$.
\end{proof}}

\section{\texorpdfstring{{Proof of the global eigenbasis}}{Proof of the global eigenbasis}}
\label{app:global-eigenbasis}

{The radial input is the classical Gegenbauer basis
\cite[Theorem~3.1.5 and \S4.7]{Szego}; Proposition~\ref{thm:angular}
provides the real angular basis. We verify that the resulting separated
functions lie in $\operatorname{Dom}(L)$ and form a complete basis of
$\mathcal X$.}

{\begin{proof}[Proof of Proposition~\ref{prop:global-eigenbasis}]
  Fix $(d,m)\in\mathcal I$ and $k,\ell$, and put
  $\varrho=\varrho_{d,m}$.
  The function $y_k=\cos^\varrho u\,C_k^{(\varrho)}(\sin u)$
  vanishes to order at least $1$ at each endpoint, so
  $y_k\in H_0^1(I)$. Hardy's inequality also gives
  $\sec u\,y_k\in L^2(I)$; hence
  $\varphi_{d,m,k,\ell}\in\cE_n$.
  The Gegenbauer and angular equations give, by integration by parts,
  \[
    Q(\varphi_{d,m,k,\ell},\psi)
    =\mu_{d,m,k}\langle\varphi_{d,m,k,\ell},\psi\rangle_{\mathcal X},
    \qquad \psi\in C_c^\infty(I\times\Sphere).
  \]
  Proposition~\ref{prop:closedform} extends this identity to every
  $\psi\in\cE_n$. The representation theorem therefore puts
  $\varphi_{d,m,k,\ell}$ in $\operatorname{Dom}(L)$ with the stated eigenvalue.

  For each fixed $(d,m)$, the change of variables $x=\sin u$ gives the
  real isometry
  \[
    \begin{aligned}
      L^2((-1,1),(1-x^2)^{\varrho-1/2}\dd x)&\longrightarrow L^2(I),\\
      h&\longmapsto\cos^\varrho u\,h(\sin u).
    \end{aligned}
  \]
  Indeed $dx=\cos u\,du$ gives
  \[
    \int_I|\cos^\varrho u\,h(\sin u)|^2\,du
    =\int_{-1}^1(1-x^2)^{\varrho-1/2}|h(x)|^2\,dx.
  \]
  It sends the Gegenbauer basis to $(y_k)_{k\ge0}$, giving a complete
  orthogonal radial basis in each angular sector. The orthogonal direct
  sum in Proposition~\ref{thm:angular} then gives global completeness.
\end{proof}}

\Needspace{10\baselineskip}


\begin{thebibliography}{99}
  \linespread{1.0}
  \selectfont
  \setlength{\itemsep}{3pt}
  \setlength{\parsep}{0pt}
  \bibitem{Balogh}
  Z.~M. Balogh,
  \href{https://doi.org/10.1515/crll.2003.094}{\emph{Size of characteristic sets
  and functions with prescribed gradient}},
  J. Reine Angew. Math. \textbf{564} (2003), 63--83.

  \bibitem{CCHY}
  J.-H. Cheng, H.-L. Chiu, J.-F. Hwang, and P. Yang,
  \href{https://doi.org/10.1515/crelle-2015-0044}{\emph{Umbilicity and characterization
  of Pansu spheres in the Heisenberg group}},
  J. Reine Angew. Math. \textbf{738} (2018), 203--235.

  \bibitem{CHMY}
  J.-H. Cheng, J.-F. Hwang, A. Malchiodi, and P. Yang,
  \href{https://www.numdam.org/item/ASNSP_2005_5_4_1_129_0/}{\emph{Minimal surfaces in
  pseudohermitian geometry}},
  Ann. Sc. Norm. Super. Pisa Cl. Sci. (5) \textbf{4} (2005), 129--177.

  \bibitem{CHY}
  J.-H.~Cheng, J.-F.~Hwang, and P.~Yang,
  \href{https://doi.org/10.1007/s00208-006-0033-7}{\emph{Existence and uniqueness for
  $p$-area minimizers in the
  Heisenberg group}}, Math. Ann. \textbf{337} (2007), 253--293.

  \bibitem{CowlingKlimaSikora}
  M.~G. Cowling, O. Klima, and A. Sikora,
  \href{https://doi.org/10.1090/S0002-9947-2010-04920-7}{\emph{Spectral
  multipliers for the Kohn sublaplacian on the sphere in $\CC^n$}},
  Trans. Amer. Math. Soc. \textbf{363} (2011), 611--631.

  \bibitem{Folland}
  G.~B. Folland,
  \href{https://doi.org/10.1090/s0002-9947-1972-0309156-x}{\emph{The tangential
  Cauchy--Riemann complex on spheres}},
  Trans. Amer. Math. Soc. \textbf{171} (1972), 83--133.

  \bibitem{HP}
  R.~K. Hladky and S.~D. Pauls,
  \href{https://dergipark.org.tr/en/pub/iejg/article/597626}{\emph{Variation of
  perimeter measure in sub-Riemannian geometry}},
  Int. Electron. J. Geom. \textbf{6} (2013), no.~1, 8--40.
  Earlier version: \href{https://arxiv.org/abs/math/0702237v1}{arXiv:math/0702237v1}
  (2007).

  \bibitem{HurtadoRosales}
  A. Hurtado and C. Rosales,
  \href{https://arxiv.org/abs/1501.04886}{\emph{Existence, characterization and
  stability of Pansu spheres in
  sub-Riemannian 3-space forms}},
  Calc. Var. Partial Differential Equations \textbf{54} (2015), 3183--3227.

  \bibitem{Kato}
  T. Kato,
  \emph{Perturbation Theory for Linear Operators}, 2nd ed.,
  Springer, 1976.

  \bibitem{MontefalconeSpectrum}
  F.~Montefalcone,
  \href{https://arxiv.org/abs/1110.0703v2}{\emph{Hypergeometric solutions of the closed
  eigenvalue problem on Heisenberg Isoperimetric Profiles}},
  \href{https://arxiv.org/abs/1110.0703v2}{arXiv:1110.0703v2 [math.MG]}, 17 November
  2011.

  \bibitem{MontefalconeStability}
  F. Montefalcone,
  \href{https://arxiv.org/abs/1110.0707v2}{\emph{Stability of Heisenberg isoperimetric
  profiles}},
  \href{https://arxiv.org/abs/1110.0707v2}{arXiv:1110.0707v2 [math.MG]}, 17 November
  2011.

  \bibitem{RitoreCalibration}
  M. Ritor\'e,
  \href{https://doi.org/10.1007/s00526-011-0425-8}{\emph{A proof by calibration of an
  isoperimetric inequality in the
  Heisenberg group $\mathbb H^n$}},
  Calc. Var. Partial Differential Equations \textbf{44} (2012), 47--60.

  \bibitem{Szego}
  G. Szeg\H{o},
  \emph{Orthogonal Polynomials}, 4th ed.,
  Amer. Math. Soc. Colloq. Publ., vol.~23,
  Amer. Math. Soc., Providence, RI, 1975.

  \bibitem{VerzellesiVariation}
  S.~Verzellesi,
  \href{https://arxiv.org/abs/2608.03431v1}{\emph{Variation formulas for mean curvature
  functionals}},
  \href{https://arxiv.org/abs/2608.03431v1}{arXiv:2608.03431v1 [math.DG]}, 4 August
  2026.

\end{thebibliography}
\end{document}